\theoremstyle{plain}
\newtheorem{theorem}{Theorem}[section]
\newtheorem{thm}[theorem]{Theorem}
\newtheorem{cor}[theorem]{Corollary}
\newtheorem{prop}[theorem]{Proposition}
\newtheorem{lem}[theorem]{Lemma}
\theoremstyle{definition}
\newtheorem{defn}[theorem]{Definition}
\newtheorem{eg}[theorem]{Example}
\newcommand{\bN}{{\mathbb{N}}}
\newcommand{\bT}{{\mathbb{T}}}
\newcommand{\bZ}{{\mathbb{Z}}}
  \newcommand{\A}{{\mathcal{A}}}
  \newcommand{\G}{{\mathcal{G}}}
\newcommand{\M}{{\mathcal{M}}}
  \newcommand{\Z}{{\mathcal{Z}}}
\newcommand{\fA}{{\mathfrak{A}}}
\newcommand{\fB}{{\mathfrak{B}}}
\newcommand{\fD}{{\mathfrak{D}}}
\newcommand{\fF}{{\mathfrak{F}}}
\newcommand{\Bp}{{\mathbf{p}}}
\newcommand{\Bq}{{\mathbf{q}}}
\newcommand{\rC}{{\mathrm{C}}}
\newcommand{\upchi}{{\raise.35ex\hbox{\ensuremath{\chi}}}}
\newcommand{\qforal}{\quad\text{for all}\quad}
\newcommand{\Aut}{\operatorname{Aut}}
\newcommand{\spn}{\operatorname{span}}
\newcommand{\Per}{\operatorname{Per}}
\newcommand{\ca}{\mathrm{C}^*}
\newcommand{\mt}{\varnothing}
\newcommand{\ol}{\overline}
\begin{document}
\title[Periodic higher rank graphs]{Periodic higher rank graphs revisited}
\author[D. Yang]{Dilian Yang}
\address{Dilian Yang,
Department of Mathematics $\&$ Statistics, University of Windsor, Windsor, ON
N9B 3P4, CANADA}
\email{dyang@uwindsor.ca}

\begin{abstract}
Let $P$ be a finitely generated cancellative abelian monoid. 
A $P$-graph $\Lambda$ is a natural generalization of a higher rank graph.
A pullback of $\Lambda$ is constructed by pulling it back over a given monoid morphism to $P$,
while a pushout of $\Lambda$ is obtained by modding out its periodicity $\Per\Lambda$,
which is deduced from a natural equivalence relation on $\Lambda$. 
One of our main results in this paper shows that, for a class of higher rank graphs $\Lambda$, $\Lambda$ is isomorphic to the pullback of its pushout
via a natural quotient map,
and that its graph C*-algebra can be embedded into the tensor product of the graph C*-algebra of its pushout and $\ca(\Per\Lambda)$. 
As a consequence, its cycline C*-algebra generated by the standard generators with 
equivalent pairs is an abelian core (particularly a MASA). Along the way, we give an in-depth study on periodicity of $P$-graphs.
\end{abstract}

\subjclass[2010]{46L05, 47L30} \keywords{$k$-graph, $P$-graph, pullback, pushout, periodicity, MASA}
\thanks{The author partially supported by an NSERC grant.}

\date{}
\maketitle

\section{Introduction}

Let $P$ be a finitely generated cancellative abelian monoid. 
It was first suggested to study $P$-graphs in \cite{KumPask}, where
$\bN^k$-graphs (now known as \textit{$k$-graphs} or \textit{higher rank graphs}) were studied. But this has not been done until very recently. 
Based on some ideas in \cite{KumPask}, Carlsen, Kang, Shotwell and Sims 
study $P$-graphs, their pullbacks and associated C*-algebras in \cite{CKSS14}. Making use of them as a tool, 
they successfully describe the picture 
of the primitive idea space in the C*-algebra of a row-finite and source-free higher rank graph, which gives an analogue of 
(directed) graph C*-algebras in \cite{HS04}.

In this paper, we study pullbacks and pushouts of $P$-graphs, and we are particularly interested in 
their applications to periodic higher rank graphs. 
Let $P$ and $Q$ be two finitely generated cancellative abelian monoids,  $f:P\to Q$ be a monoid morphism,
and $\Gamma$ be a row-finite and source-free $Q$-graph. It is natural to construct a $P$-graph $f^*\Gamma$, which is called the pullback of $\Gamma$ via 
$f$, from $\Gamma$ and $f$. This idea first appeared in \cite{KumPask} and was further explored in \cite{CKSS14}.  
To construct pushouts is a little bit more involved; we make use of an idea from  \cite{DY09a, DY09b},
which has been substantially generalized to higher rank graphs in \cite{CKSS14}. 
Let $\Lambda$ be a row-finite and source-free $P$-graph.  Define a natural equivalence relation $\sim$ on $\Lambda$,
and then construct the periodicity $\Per\Lambda$ from $\sim$. 
Loosely speaking, under some suitable conditions the pushout $\Lambda/\sim$ of $\Lambda$
is a $\Bq(P)$-graph, which is obtained by modding out the periodicity of $\Lambda$. 
One key property we observe is that the pushout $\Lambda/\sim$ is aperiodic. 
This seems rather natural, but to prove it needs some care.
It is shown that $\Lambda$ is isomorphic to the pullback of the pushout $\Lambda/\sim$: 
$\Lambda\cong\Bq^*(\Lambda/\sim)$. In this case, $\ca(\Lambda)$ can be embedded into 
the tensor product of $\ca(\Lambda/\sim)$ with $\ca(\Per)$. Actually, it turns out that 
this embedding is the best one could possibly obtain. 
Along the way, we also have an in-depth study on periodicity of $P$-graphs.

An application of our results is given to the cycline subalgebra $\M$ of a higher rank graph. 
By definition, $\M$ is the sub-C*-algebra of $\ca(\Lambda)$ generated by the standard
generators with equivalent pairs in $\Lambda$. 
It turns out that $\M$ plays an important role in the structure of $\ca(\Lambda)$ due to the following Cuntz-Krieger uniqueness theorem 
in \cite{BNR14}: a representation of $\ca(\Lambda)$ is injective if and only if so is its restriction onto $\M$.  
It was asked in \cite{BNR14} if $\M$ is an abelian core of $\ca(\Lambda)$ \cite{NR12}. 
We answer this affirmatively in our case. That is, we show that $\M$  is a maximal abelian subalgebra (MASA) of $\ca(\Lambda)$, 
which is actually isomorphic to the tensor product of the canonical diagonal algebra $\fD_\Lambda$ with
$\ca(\Per\Lambda)$, and that there is also a faithful conditional expectation from $\ca(\Lambda)$ onto $\M$. 

This paper is motivated by \cite{BNR14, CKSS14} and strongly influenced by \cite{CKSS14}.
In Section \ref{S:Pgra}, some necessary background on $P$-graphs and the associated C*-algebras is briefly given. 
The main result of Section \ref{S:pullback} is an embedding theorem, which roughly says that
the C*-algebras of a class of $k$-graph C*-algebras, being pullbacks, 
can be embedded into tensor products of $P$-graph C*-algebras with commutative C*-algebras.
It turns out that this is the best one could possibly obtain in general. 
In Section \ref{S:Periodicity} we study periodicity of $P$-graphs in detail. 
We give some characterizations of aperiodicity, and provide a (sort of) concrete description for periodicity.
Our results generalize and unify what we have known for row-finite and source-free $k$-graphs in the literature. 
We finish off this section by showing that all pullbacks are periodic. 
We believe that the results of this section may be of independent interest and be very useful in future study.
We in Section \ref{S:PP} reverse the process of Section \ref{S:pullback}. Loosely speaking, we start with a $P$-graph $\Lambda$ whose periodicity 
$\Per\Lambda$ is a subgroup of the Grothendieck group $\G(P)$ of $P$. By modding out its periodicity, we obtain an aperiodic pushout, whose 
pullback is isomorphic to $\Lambda$. 
As an application of our results, in Section \ref{S:k-str}, we answer the questions posed in \cite{BNR14} in our case.

\subsection*{Notation and Conventions}

In this paper, 
\textbf{all monoids are assumed to be finitely generated, cancellative, and abelian.}
If $P$ is such a monoid, we also regarded it a category with one object. 
We use $\G(P)$ to denote the Grothendieck group of $P$, and we always embed $P$ into $\G(P)$.

As usual, for $m, n\in \bZ^k$, we use 
 $m\vee n$ and $m\wedge n$ to denote the coordinate-wise maximum and minimum of $m$ and $n$, respectively. For $n\in \bZ^k$,
 we let $n_+=n\vee 0$ and $n_-=-(n\wedge 0)$.  Of course, $n=n_+-n_-$ with $n_+\wedge n_-=0$. 

\section{$P$-Graphs}
\label{S:Pgra}

Let $P$ be a monoid.
$P$-graphs are a generalization of $k$-graphs, and share many properties with $k$-graphs. In this section, we 
briefly recall some basics on $P$-graphs which will be needed later. Refer to \cite[Section 2]{CKSS14} for more details. 

A \textit{$P$-graph} is a countable small category $\Gamma$ with a functor $d:\Gamma\to P$ such that the following 
factorization property holds:
whenever $\xi\in\Gamma$ satisfies $d(\xi)=p+q$, there are unique elements $\eta, \zeta\in\Gamma$ such that 
$d(\eta)=p$, $d(\zeta)=q$ and $\xi=\eta\zeta$. Clearly, any $k$-graph is an $\bN^k$-graph.
All notions on $k$-graphs can be generalized to $P$-graphs. For instance, 
for $p\in P$, let $\Gamma^p=d^{-1}(p)$, and so $\Gamma^0$ is the vertex set of $\Gamma$. 
There are source and range maps $s, r: \Gamma\to \Gamma^0$ such that $r(\xi)\xi s(\xi)=\xi$ for all $\xi\in\Gamma$.
For $v\in\Gamma^0$,
$v\Gamma=\{\xi\in\Gamma: r(\xi)=v\}$. 
We say that a $P$-graph $\Gamma$ is \textit{row-finite and source-free} if 
$0<|v\Gamma^p|<\infty$ for all $v\in\Gamma^0$ and $p\in P$. 

Let
\[
\Omega_P=\{(p,q)\in P\times P\mid q-p\in P\}. 
\]
Define $d,s,r:\Omega_P\to P$ by $d(p,q)=q-p$, $s(p,q)=q$, and $r(p,q)=p$.
It is shown in \cite[Example 2.2]{CKSS14} that $\Omega_P$ is a row-finite and source-free $P$-graph.

Let $\Lambda$ and $\Gamma$ be two $P$-graphs. A \textit{$P$-graph morphism} between $\Lambda$ and $\Gamma$ is a 
functor $x:\Lambda\to \Gamma$ such that $d_\Gamma(x(\lambda))=d_\Lambda(\lambda)$ for all $\lambda\in\Lambda$.  
The \textit{infinite path space of $\Gamma$} is defined as
\[
\Gamma^\infty=\{x:  \Omega_P\to \Gamma\mid x \text{ is a } P\text{-graph morphism}\}.
\]
For $x\in \Gamma^\infty$ and $p\in P$, there is a unique element $\sigma^p(x)\in \Gamma^\infty$ defined by
\[
\sigma^p(x)(q,r)=x(p+q,p+r).
\]
That is, $\sigma^p$ is a shift map on $\Gamma^\infty$. If $\mu\in\Gamma$ and $x\in s(\mu)\Gamma^\infty$, then $\mu x$ 
is defined to be the unique infinite path such that 
$\mu x(0,p)=\mu\cdot x(0,p-d(\mu))$ for any $p\in P$ with $p-d(\mu)\in P$. 
If $\sigma^p(x)=\sigma^q(x)$ for some $p\ne q\in P$, $x$ is said to be \textit{(eventually) periodic}.

\begin{defn}
A $P$-graph $\Gamma$ is said to be {periodic} if there is $v\in\Gamma^0$ such that every $x\in v\Gamma^\infty$ is periodic. 
Otherwise, $\Gamma$ is called {aperiodic}. 
\end{defn}

For a row-finite and source-free $P$-graph $\Gamma$, we associate to it a universal C*-algebra $\ca(\Gamma)$ as follows.

\begin{defn}
Let $\Gamma$ be a row-finite and source-free $P$-graph. A \textit{Cuntz-Krieger $\Gamma$-family} in a C*-algebra $\A$ is a family $\{S_\lambda:\lambda\in\Gamma\}$ in $\A$
such that 
\begin{itemize}
\item[(CK1)] $\{S_v:v\in\Gamma^0\}$ is a set of mutually orthogonal projections;
\item[(CK2)] $S_\mu S_\nu=S_{\mu\nu}$ whenever $s(\mu)=r(\nu)$;
\item[(CK3)] $S_\lambda^* S_\lambda=S_{s(\lambda)}$ for all $\lambda\in\Gamma$;
\item[(CK4)] $S_v=\sum_{\lambda\in v\Gamma^p}S_\lambda S_\lambda^*$ for all $v\in\Gamma^0$ and $p\in P$.
\end{itemize}

The \textit{$P$-graph C*-algebra $\ca(\Gamma)$} is the universal C*-algebra among Cuntz-Krieger $\Gamma$-families. We usually use $s_\lambda$'s to 
denote its generators. 
\end{defn}

It is known that
\[
\ca(\Gamma)=\ol\spn\{s_\mu s_\nu^*: \mu, \nu\in \Gamma\}.
\]
One important property is that 
$\ca(\Gamma)$ and the reduced C*-algebra $\ca_r(\G_\Gamma)$ are isomorphic, where $\G_\Gamma$ is the groupoid associated to $\Gamma$. 

By the universal property of $\ca(\Gamma)$, there is a natural gauge action 
$\gamma$ of the dual group $\widehat{\G(P)}$ of $\G(P)$ on $\ca(\Gamma)$ defined by
\[
\gamma_\chi(s_\lambda)=\chi(d(\lambda))s_\lambda\quad (\chi\in\widehat{\G(P)},\ \lambda\in\Gamma).
\]
Averaging over $\gamma$ gives a faithful conditional expectation $\Phi$ from $\ca(\Gamma)$ onto the fixed point algebra $\ca(\Gamma)^\gamma$. 
It turns out that  $\ca(\Gamma)^\gamma$ is an AF algebra and 
\[
\fF_\Gamma:=\ca(\Gamma)^\gamma=\ol\spn\{s_\mu s_\nu^*: d(\mu)=d(\nu)\}.
\]
The \textit{(canonical) diagonal algebra} $\fD_\Gamma$ of $\ca(\Gamma)$ is defined as 
\[
\fD_\Gamma=\ol\spn\{s_\mu s_\mu^*: \mu\in\Gamma\},
\]
which is the canonical MASA of $\fF_\Gamma$. 

Furthermore, as $k$-graphs, we also have the following two important uniqueness theorems for $\ca(\Gamma)$:
the gauge invariant uniqueness theorem \cite[proposition 2.7]{CKSS14} and the Cuntz-Krieger uniqueness theorem \cite[Corollary 2.8]{CKSS14}.

\medskip

Throughout the rest of this paper, 
\begin{center}
\textbf{all $P$-graphs are assumed to be row-finite and source-free.} 
\end{center}
An $\bN^k$-graph is simply called a $k$-graph as in the literature.

\section{Pullbacks and An Embedding Theorem}

\label{S:pullback}

We begin this section with the notion of pullbacks. 

Let $P$ and $Q$ be two monoids, and $f:P\to Q$ be a (monoid) homomorphism.  
If $(\Gamma, d_\Gamma)$ is a $Q$-graph, the \textit{pullback of $\Gamma$ via $f$} is the $P$-graph $(f^*\Gamma,d_{f^*\Gamma})$
defined as follows: 
$
f^*\Gamma=\{(\lambda,p):d_\Gamma(\lambda)=f(p)\}$ with 
$d_{f^*\Gamma}(\lambda, p)=p$, $s(\lambda, p)=s(\lambda)$ and $r(\lambda,p)=r(\lambda)$. The composition of two paths in $f^*\Gamma$
is given by $(\mu,p)(\nu,q)=(\mu\nu, p+q)$ if $s(\mu)=r(\nu)$.  

The pullback $f^*\Gamma$ defined above is indeed a $P$-graph. The proof is completely similar to that of \cite[Lemma 3.2]{CKSS14} where $P=\bN^k$.  
The following properties are easily derived from the very definition of pullbacks, and so their proofs are omitted here. 

\begin{lem}
\label{L:P-rowfinite}
Let $P$ and $Q$ be two monoids, and $f:P\to Q$ be a homomorphism. 
Suppose that $(\Gamma, d_\Gamma)$ is a $Q$-graph. Then we have the following.
\begin{itemize}
\item[(i)] $\Gamma^0=(f^*\Gamma)^0$. 

\item[(ii)] If $\Gamma$ is source-free, then so is $f^*\Gamma$. The converse holds if $f$ is surjective. 

\item[(iii)] If $f^*\Gamma$ is row-finite, then so is $\Gamma$. The converse holds if $f$ is surjective. 
\end{itemize}
\end{lem}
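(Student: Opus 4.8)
The plan is to reduce all three assertions to a single bijection obtained by unwinding the definition of the pullback, so that (ii) and (iii) become the ``nonempty'' and the ``finite'' versions of one statement. I would start with (i): since $f$ is a monoid homomorphism, $f(0)=0$, hence
\[
(f^*\Gamma)^0 = d_{f^*\Gamma}^{-1}(0) = \{(\lambda,0): d_\Gamma(\lambda)=f(0)=0\} = \{(\lambda,0):\lambda\in\Gamma^0\},
\]
and $(\lambda,0)\mapsto\lambda$ is a bijection onto $\Gamma^0$ which, by the definition of $r$ and $s$ on $f^*\Gamma$, intertwines the range and source maps; thus $\Gamma^0$ and $(f^*\Gamma)^0$ are canonically identified, and I use this identification below.

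The heart of the matter is the following, which I would prove next: for every $v\in\Gamma^0$ and every $p\in P$ the map $(\lambda,p)\mapsto\lambda$ restricts to a bijection $v(f^*\Gamma)^p \to v\Gamma^{f(p)}$. Indeed $(\lambda,p)\in v(f^*\Gamma)^p$ means $r(\lambda)=v$ and $d_\Gamma(\lambda)=f(p)$, i.e. $\lambda\in v\Gamma^{f(p)}$; the map is injective because the second coordinate is pinned at $p$, and surjective because any $\lambda\in v\Gamma^{f(p)}$ yields a legitimate pair $(\lambda,p)\in f^*\Gamma$ of range $v$ and degree $p$. Consequently $|v(f^*\Gamma)^p| = |v\Gamma^{f(p)}|$, so $v(f^*\Gamma)^p$ is empty (resp. infinite) exactly when $v\Gamma^{f(p)}$ is.

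From this, (ii) and (iii) drop out. For (ii): if $\Gamma$ is source-free then $v\Gamma^{f(p)}\neq\varnothing$ for all $v,p$, so $v(f^*\Gamma)^p\neq\varnothing$ and $f^*\Gamma$ is source-free; conversely, if $f$ is onto then every $q\in Q$ equals $f(p)$ for some $p\in P$, and $v\Gamma^q=v\Gamma^{f(p)}$ is nonempty whenever the corresponding $v(f^*\Gamma)^p$ is. Statement (iii) is the verbatim analogue with ``nonempty'' replaced by ``finite'': the identity $|v(f^*\Gamma)^p|=|v\Gamma^{f(p)}|$ shows that $f^*\Gamma$ is row-finite if and only if $v\Gamma^q$ is finite for every $v$ and every $q$ in the image $f(P)$, and this image is all of $Q$ precisely when $f$ is onto.

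I do not expect a genuine obstacle here; once the bijection $v(f^*\Gamma)^p\cong v\Gamma^{f(p)}$ is established, the rest is bookkeeping. The one point worth flagging is the asymmetric role of surjectivity: a property of $f^*\Gamma$ only constrains $\Gamma$ at degrees lying in $f(P)$, since degrees in $Q\setminus f(P)$ are invisible to $f^*\Gamma$, so deducing a property of $\Gamma$ at all degrees from the corresponding property of $f^*\Gamma$ is exactly what requires $f$ to be onto.
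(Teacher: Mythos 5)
Your argument is correct, and it is the argument the paper intends: the paper in fact omits the proof of this lemma, saying only that the properties ``are easily derived from the very definition of pullbacks,'' and your reduction of all three parts to the single bijection $v(f^*\Gamma)^p \to v\Gamma^{f(p)}$, $(\lambda,p)\mapsto\lambda$ (after the identification $(f^*\Gamma)^0=\{(\lambda,0):\lambda\in\Gamma^0\}\cong\Gamma^0$), is precisely that derivation.

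One point worth flagging: what you prove in (iii) is the version parallel to (ii) --- if $\Gamma$ is row-finite then so is $f^*\Gamma$, unconditionally, while the reverse implication needs $f$ onto --- and this is the mathematically correct version, but it is not the literal sentence printed in the lemma, which asserts unconditionally that row-finiteness of $f^*\Gamma$ forces row-finiteness of $\Gamma$ and reserves surjectivity for the converse. Your own bijection shows why the printed implication cannot hold in general: degrees in $Q\setminus f(P)$ are invisible to $f^*\Gamma$. Concretely, take $f:\bN\to\bN^2$, $f(n)=(n,0)$, and let $\Gamma$ be the single-vertex $\bN^2$-graph obtained as the product of a $1$-graph with one loop and a $1$-graph with infinitely many loops; then $f^*\Gamma$ is the $1$-graph with a single loop, hence row-finite, while $\Gamma$ is not. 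So the surjectivity hypothesis in (iii) belongs exactly where you put it. Since in all later applications in the paper $f=\Bq$ is a quotient map, hence surjective, both directions hold there and nothing downstream is affected; but as stated, part (iii) of the lemma should be read with the two implications interchanged, which is what your proof delivers.
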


 \smallskip
 
In the remainder of this section, we focus on morphisms $f$ induced from group homomorphisms on $\bZ^k$. 
In this case, we can prove the following embedding theorem.

\begin{thm}
\label{T:tensor}
Let $H$ be a subgroup of $\bZ^k$, and $\Bq:\bZ^k\to \bZ^k/H$ be the quotient map. 
Suppose that $\Gamma$ is a $\Bq(\bN^k)$-graph. 
Then there is an injective C*-homomorphism from $\ca(\Bq^*\Gamma)$ to
$\ca(\Gamma) \otimes \ca(H)$.\footnote{Since the group C*-algebra $\ca(H)$ is abelian, it does not matter which C*-tensor product one chooses.} 
\end{thm}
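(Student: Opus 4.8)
The plan is to construct the embedding explicitly on generators and then verify the Cuntz–Krieger relations, using the gauge-invariant uniqueness theorem \cite[Proposition 2.7]{CKSS14} to conclude injectivity. Write elements of $\Bq^*\Gamma$ as pairs $(\lambda,n)$ with $\lambda\in\Gamma$, $n\in\bN^k$ and $d_\Gamma(\lambda)=\Bq(n)$. Let $\{t_\lambda:\lambda\in\Gamma\}$ denote the canonical generators of $\ca(\Gamma)$, and let $\{u_h:h\in H\}$ denote the canonical unitaries in $\ca(H)\subseteq \ca(\bZ^k)$; note $\Bq(n)=\Bq(m)$ forces $n-m\in H$, so $u_{n-m}$ makes sense. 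First I would define, for each $(\lambda,n)\in\Bq^*\Gamma$,
\[
S_{(\lambda,n)}:=t_\lambda\otimes u_{n},
\]
where I abuse notation and regard $n\in\bN^k\subseteq\bZ^k$ and $u_n$ as the corresponding unitary in $\ca(\bZ^k)$; since only differences $n-m\in H$ ever appear when comparing two lifts of the same $\lambda$, one checks the formula only depends on the data through $\ca(\Gamma)\otimes\ca(H)$ after a suitable normalization — more precisely I would fix a set-theoretic section $\Bq(\bN^k)\to\bN^k$ and work with it, absorbing the $H$-part into the second tensor factor. (In the write-up I would phrase this cleanly so that no section is actually needed: the point is that $\ca(H)$ carries the "extra'' $\bZ^k$-grading that $\ca(\Gamma)$ has lost.)

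Next I would check (CK1)–(CK4) for the family $\{S_{(\lambda,n)}\}$. Relations (CK1), (CK2), (CK3) are immediate from the corresponding relations for $\{t_\lambda\}$ in $\ca(\Gamma)$ together with the fact that the $u_n$ are unitaries satisfying $u_n u_m=u_{n+m}$ and that $(\lambda,n)(\nu,m)=(\lambda\nu,n+m)$; for (CK1) one uses that vertices of $\Bq^*\Gamma$ are vertices of $\Gamma$ (Lemma \ref{L:P-rowfinite}(i)) and $u_0=1$. For (CK4), fix $v\in\Gamma^0$ and $n\in\bN^k$; by the factorization and the definition of the pullback, $v(\Bq^*\Gamma)^n=\{(\lambda,n):\lambda\in v\Gamma^{\Bq(n)}\}$, so
\[
\sum_{(\lambda,n)\in v(\Bq^*\Gamma)^n}S_{(\lambda,n)}S_{(\lambda,n)}^*
=\Bigl(\sum_{\lambda\in v\Gamma^{\Bq(n)}}t_\lambda t_\lambda^*\Bigr)\otimes u_n u_n^*
=S_v\otimes 1=S_{(v,0)},
\]
using (CK4) in $\ca(\Gamma)$ and $u_nu_n^*=1$. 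By universality of $\ca(\Bq^*\Gamma)$ this yields a $*$-homomorphism $\pi:\ca(\Bq^*\Gamma)\to\ca(\Gamma)\otimes\ca(H)$.

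Finally, injectivity. I would invoke the gauge-invariant uniqueness theorem: $\pi$ is injective provided $\pi(S_v)\ne 0$ for every vertex $v$ (clear, since $S_v\otimes 1\ne 0$) and provided $\pi$ is equivariant for a suitable circle/torus action. The gauge action on $\ca(\Bq^*\Gamma)$ is by $\widehat{\bZ^k}=\bT^k$, acting as $\gamma_\chi(S_{(\lambda,n)})=\chi(n)S_{(\lambda,n)}$. On the target I would use the action $\chi\mapsto\gamma^\Gamma_{\bar\chi}\otimes\beta_\chi$, where $\gamma^\Gamma$ is the gauge action of $\widehat{\Bq(\bZ^k)}$ on $\ca(\Gamma)$ (pulled back along $\Bq$, i.e.\ $\bar\chi=\chi\circ(\text{section})$ — better: along the dual map $\widehat{\Bq}:\widehat{\bZ^k/H}\hookrightarrow\widehat{\bZ^k}$, noting $\widehat{\bZ^k/H}$ is exactly the annihilator of $H$) and $\beta$ is the dual action of $\bT^k$ on $\ca(\bZ^k)$ restricted to $\ca(H)$ (which factors through $\bT^k\to\widehat H$). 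On $t_\lambda\otimes u_n$ this acts by $\chi(\Bq(n))\,\chi(n)\,(\cdot)$ — wait, that is not right; the correct bookkeeping is that the $\bZ^k/H$-part and the $H$-part together recover the full $\bZ^k$-grading $n$, so the combined action scales $t_\lambda\otimes u_n$ by $\chi(n)$. This matches $\gamma_\chi$ on $\ca(\Bq^*\Gamma)$, so $\pi$ is equivariant, and gauge-invariant uniqueness gives injectivity.

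The main obstacle is the bookkeeping in the last paragraph: making precise, without choosing an unnatural section, how the gauge action of $\widehat{\bZ^k/H}$ on $\ca(\Gamma)$ combined with the dual action on $\ca(H)$ reassembles the full $\widehat{\bZ^k}$-grading that governs $\ca(\Bq^*\Gamma)$. Concretely one wants the short exact sequence $0\to H\to\bZ^k\xrightarrow{\Bq}\bZ^k/H\to 0$, dualized to $0\to\widehat{\bZ^k/H}\to\widehat{\bZ^k}\to\widehat H\to 0$, to intertwine the three gauge/dual actions; checking this compatibility carefully — and checking that $\pi(S_v)\neq0$, which is where one uses that $\ca(H)$ is a nonzero unital algebra — is the only non-routine part.
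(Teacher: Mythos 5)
Your overall strategy (exhibit a Cuntz--Krieger $\Bq^*\Gamma$-family in the tensor product, invoke universality, then get injectivity from the gauge-invariant uniqueness theorem) is the same as the paper's, but the one genuinely non-routine ingredient is missing, and both of your proposed substitutes fail exactly in the interesting case where $\bZ^k/H$ has torsion. First, $t_\lambda\otimes u_n$ does not lie in $\ca(\Gamma)\otimes\ca(H)$, and your fix via a set-theoretic section $\sigma:\Bq(\bN^k)\to\bN^k$ (i.e.\ $S_{(\lambda,n)}:=t_\lambda\otimes u_{n-\sigma(\Bq(n))}$) breaks (CK2): multiplicativity needs $\sigma(\Bq(n))+\sigma(\Bq(m))=\sigma(\Bq(n+m))$, and no additive section of $\Bq$ exists once some invariant factor $d_i>1$. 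Second, the equivariance bookkeeping you flag (``wait, that is not right'') is never repaired: $\chi(\Bq(n))$ is undefined for a general $\chi\in\bT^k$, and the dual map $\widehat{\Bq}:\widehat{\bZ^k/H}\hookrightarrow\widehat{\bZ^k}$ goes the wrong way, so it cannot convert $\chi\in\bT^k$ into a character $\bar\chi$ of $\bZ^k/H$; likewise the hoped-for statement that ``the $\bZ^k/H$-part and the $H$-part together recover the full $\bZ^k$-grading'' is precisely what fails canonically in the presence of torsion (the dual sequence does not split compatibly, since a splitting would dualize to a retraction $\bZ^k\to H$, which does not exist when some $d_i>1$). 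Asserting that the combined action scales $t_\lambda\otimes u_n$ by $\chi(n)$ is assuming the conclusion of the step that needs proof.

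The paper's proof supplies exactly the missing device: choose, via the structure theorem (Smith normal form), a basis $f_1,\dots,f_k$ of $\bZ^k$ with $d_1f_1,\dots,d_rf_r$ a basis of $H$, and define the homomorphism $\jmath:\bZ^k/H\to\bZ^k$ projecting onto the torsion-free part. Then the family $s_{(\mu,n)}\mapsto s_\mu\otimes V^{\,n-\jmath\circ d(\mu)}$ (with $V^m$ interpreted through the basis of $H$) both lands in $\ca(\Gamma)\otimes\ca(H)$ and is multiplicative because $n\mapsto V^{\,n-\jmath\circ\Bq(n)}$ is a genuine homomorphism, and the twisted action $\Theta_t(s_\mu\otimes V^n)=t^{\jmath\circ d(\mu)}s_\mu\otimes t^nV^n$ is a well-defined $\bT^k$-action on the target with respect to which $\pi$ is equivariant, so the gauge-invariant uniqueness theorem applies. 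Without $\jmath$ (or some equivalent homomorphic splitting onto the torsion-free part) your construction either maps into the larger algebra $\ca(\Gamma)\otimes\ca(\bZ^k)$, which does not prove the stated theorem, or fails the Cuntz--Krieger and equivariance checks; so as written the proposal has a genuine gap at its central step.
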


\begin{proof}
Without loss of generality, we assume that $H$ has  rank $r\ge 1$. Then there is a basis $\{f_1,\ldots, f_r, \ldots, f_k\}$ 
of $\bZ^k$ and $d_1,\ldots, d_r\ge 1$ such that $d_i$ divides $d_{i+1}$ for all $1\le i\le r-1$ and 
$\{d_1f_1,\ldots, d_r f_r\}$ is a basis of $H$ \cite[Theorem 2.12]{RGS99}.
Furthermore, it follows from \cite[Theorem 2.14]{RGS99}  that $\bZ^k/H\cong \bZ_{d_1}\times\cdots\times \bZ_{d_r}\times\bZ^{k-r}$ under the 
isomorphism:
\begin{align*}
\varphi: \bZ^k/H&\to \bZ_{d_1}\times\cdots\times \bZ_{d_r}\times\bZ^{k-r},\\
 \big[\sum_{i=1}^k n_if_i\big] & \mapsto ([n_1]_{d_1}, \ldots, [n_r]_{d_r}, n_{r+1}, \ldots, n_k).
\end{align*}
So we identify $\bZ^k/H$ with $\bZ_{d_1}\times\cdots\times \bZ_{d_r}\times\bZ^{k-r}$ via $\varphi$ in what follows. 


Define a  `projection'  $\jmath$ from $\bZ^k/H$ onto the torsion-free part via 
\begin{align*}
&\jmath: \bZ^k/H\to \bZ^k, \\
&\quad ([n_1]_{d_1}, \ldots, [n_r]_{d_r}, n_{r+1}, \ldots, n_k)\mapsto (0, \ldots, 0, n_{r+1}, \ldots, n_k).
\end{align*}
Clearly, $\jmath$ is a homomorphism. Also $\jmath\circ \Bq$ is the projection from $\bZ^k$ onto the last $k-r$ factors: 
\[
\jmath\circ \Bq(n)=(0, \ldots, 0, n_{r+1}, \ldots, n_k)\quad \text{for all}\quad n\in \bZ^k.
\]

For $1\le i\le r$, let $h_i:=d_if_i$ and
$V_i$ be the unitary generators in the group C*-algebra $\ca(H)$ corresponding to $h_i$. 
Define an action $\Theta: \bT^k\to \Aut\big(\ca(\Gamma)\otimes \ca(H)\big)$ via
\begin{align}
\label{E:action}
\Theta_t(s_{\mu}\otimes V^n)=t^{\jmath\circ d(\mu)} s_{\mu}\otimes t^n V^n \quad \text{for all}\quad t\in\bT^k,
\end{align}
where $n=(n_1,\ldots, n_r,0,\ldots,0)\in\bZ^k$, 
and we use the multi-index notation
$V^n=\prod_{i=1}^r V_i^{n_i}$.
 One may think of  the action $\Theta$ as
the tensor product of a natural action of $H^\perp (\subset \bT^k)$ on $\ca(\Gamma)$ and the 
gauge action of $\widehat H$ on $\ca(H)$. 

Notice that, for any $(\mu,n)\in\Bq^*\Gamma$, one has  
\[
n-\jmath\circ d(\mu)=n-\jmath\circ \Bq(n)=(n_1, \ldots, n_r, 0, \ldots, 0).
\]
Then one can verify that $\big\{s_{\mu}\otimes  V^{n-\jmath\circ  d (\mu)}: d(\mu)=\Bq(n)\big\}$ is a Cuntz-Krieger $\Bq^*\Gamma$-family. 
To this end, let $t_{(\mu,n)}:=s_{\mu}\otimes  V^{n-\jmath\circ  d (\mu)}$. Obviously (CK1)  and (CK3) hold true. For (CK2), let $s(\mu)=s(\nu)$ and $(\mu,n)$,
$(\nu,m)\in\Bq^*\Gamma$. Then 
\begin{align*}
t_{(\mu,n)}t_{(\nu,m)}
&=(s_{\mu}\otimes  V^{n-\jmath\circ  d (\mu)})(s_{\nu}\otimes  V^{m-\jmath\circ  d (\nu)})\\
&=s_{\mu}s_{\nu}\otimes  V^{n+m-\jmath\circ  d (\mu)-\jmath\circ  d (\nu)}\\
&=s_{\mu\nu}\otimes  V^{n+m-\jmath\circ  (d (\mu)+d (\nu))}\\ 
&=t_{(\mu\nu,n+m)}
  =t_{(\mu,n)(\nu,m)},
\end{align*}
where the third ``=" used the property (CK2) for $\{s_\mu: \mu\in\Gamma\}$ and the property that $\jmath$ is a homomorphism. 
To verify (CK4), for $v\in (\Bq^*\Gamma)^0=\Gamma^0$ and $n\in\bN^k$, we compute 
\begin{align*}
\sum_{(\mu,n)\in v(\Bq^*\Gamma)^n} t_{(\mu,n)}t_{(\mu,n)}^*
&=\sum_{(\mu,n)\in v(\Bq^*\Gamma)^n} \big(s_{\mu}\otimes  V^{n-\jmath\circ  d (\mu)}\big)\big(s_{\mu}\otimes  V^{n-\jmath\circ  d (\mu)}\big)^*\\
&=\sum_{(\mu,n)\in v(\Bq^*\Gamma)^n} s_{\mu}s_{\mu}^*\otimes  I\\
&=\sum_{\mu\in v\Gamma^{\Bq(n)}}s_{\mu}s_{\mu}^*\otimes  I\\
&=s_v\otimes I
\end{align*}
due to property (CK4) for  $\{s_\mu: \mu\in\Gamma\}$.

By the universal property of $\ca(\Bq^*\Gamma)$, there is a (unique) *-homomorphism $\pi$ determined by  
\begin{align}
\nonumber 
\pi: \ca(\Bq^*\Gamma)&\to \ca(\Gamma)\otimes \ca(H),\\
\label{E:pi}
s_{(\mu,n)} &\mapsto s_{\mu}\otimes V^{n-\jmath\circ  d (\mu)},
\end{align}
where $(\mu, n)\in\Bq^*\Gamma$.
It is easy to check that $\pi$ is equivariant between the gauge action $\gamma$ of $\bT^k$ on $\ca(\Bq^*\Gamma)$ and the action $\Theta$ 
on $\ca(\Gamma)\otimes \ca(H)$ defined by \eqref{E:action}.  
In fact, for $t\in\bT^k$ and $(\mu,n)\in\Bq^*\Gamma$, we have 
\begin{align*}
\Theta_t\circ \pi(s_{(\mu,n)})
&=\Theta_t\big(s_{\mu}\otimes V^{n-\jmath\circ  d (\mu)}\big)\\
&=t^{\jmath\circ d(\mu)} s_{\mu}\otimes t^{n-\jmath\circ  d (\mu)} V^{n-\jmath\circ  d (\mu)}\\
&=t^ns_{\mu}\otimes V^{n-\jmath\circ  d (\mu)}\\
&=\pi\circ \gamma_t (s_{(\mu,n)}).
\end{align*}
By the gauge invariant uniqueness theorem of $k$-graphs \cite[Theorem 3.4]{KumPask}, $\pi$ is injective.
\end{proof}

One naturally wonders if $\pi$ defined in \eqref{E:pi} is also surjective. 
But, unfortunately, this is not the case in general, as the following simple example shows. 
Actually, Example \ref{Eg:Sims} in Section \ref{S:k-str} shows that Theorem \ref{T:tensor} is the best one could have. 

 \begin{eg}
 \label{Eg:1}
 Let $H=2\bZ$ and consider $\bN/H=\bZ_2$\,-graph $\Gamma$: $\Gamma^0=\{v\}$, and $\Gamma^1=\{e\}$. 
So $q^*\Gamma$ is $1$-graph: $q^*\Gamma^0=\{v\}$, and $(q^*\Gamma)^1=\{(e,1)\}$.
That is, $q^*\Gamma$ is a single-vertex directed graph with one edge.
So $\ca(q^*\Gamma)=\ca(H)=\rC(\bT)$, while $\ca(\Gamma)\otimes \ca(H)=\ca(\bZ_2)\otimes \ca(H)$. 

We will return to this example again in Section \ref{S:PP}.
\end{eg}

In some special cases, the above embedding could become an isomorphism. The following generalizes \cite[Corollary 3.5 (iii)]{KumPask}.

\begin{cor}
If $\bZ^k/H$ is torsion-free, then $\ca(\Bq^*\Gamma)\cong\ca(\Gamma) \otimes \ca(H)$.
\end{cor}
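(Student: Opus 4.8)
The plan is to show that when $\bZ^k/H$ is torsion-free, the homomorphism $\pi$ from \eqref{E:pi} is not only injective (as established in Theorem \ref{T:tensor}) but also surjective, hence an isomorphism. The key observation is that torsion-freeness of $\bZ^k/H$ forces the integers $d_1,\ldots,d_r$ in the proof of Theorem \ref{T:tensor} to all equal $1$. Indeed, if some $d_i\ge 2$, then $\varphi$ shows $\bZ^k/H$ contains a $\bZ_{d_i}$ summand, contradicting torsion-freeness. Consequently $H=\spn_{\bZ}\{f_1,\ldots,f_r\}$ is a direct summand of $\bZ^k$, the ``projection'' $\jmath$ becomes a genuine splitting of $\Bq$ (so $\jmath\circ\Bq$ is idempotent with $\jmath$ injective and $\ker\Bq = H$), and most importantly the exponent $n - \jmath\circ d(\mu) = (n_1,\ldots,n_r,0,\ldots,0)$ ranges over \emph{all} of $H$ as $(\mu,n)$ ranges over $\Bq^*\Gamma$ — not just a coset-constrained subset.

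First I would record the reduction $d_1=\cdots=d_r=1$ and note the resulting simplifications: $\Bq^*\Gamma = \{(\mu,n) : d(\mu)=\Bq(n),\ n\in\bN^k\}$, and for fixed $\mu\in\Gamma$ the set of $n$ with $\Bq(n)=d(\mu)$ is precisely $\{\jmath\circ d(\mu) + h : h\in H,\ \jmath\circ d(\mu)+h\in\bN^k\}$. Since we may always add large positive multiples of each $f_i$ and $d_i f_i = f_i$ already lies in $H$, for each $\mu$ and each $h\in H$ there is an admissible $n$ with $n-\jmath\circ d(\mu)=h$ after shifting; more carefully, one works with the generators $s_{(\mu,n)}$ and uses the Cuntz--Krieger relations to manufacture arbitrary powers $V_i^{\pm 1}\otimes$-factors. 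Then I would exhibit explicit preimages: for $\mu\in\Gamma$, since $\Gamma$ is row-finite and source-free we pick $n\in\bN^k$ with $\Bq(n)=d(\mu)$ and compute $\pi(s_{(\mu,n)}) = s_\mu\otimes V^{n-\jmath\circ d(\mu)}$; multiplying by suitable $\pi(s_{(\nu,m)})^*$ with $\nu$ a vertex-based path lets us strip the $V$-factor or adjust it, showing every $s_\mu\otimes V^h$ with $h\in H$ lies in the range of $\pi$.

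The cleanest route, which I would adopt to avoid fiddly index-chasing, is to build the inverse directly via the universal property. Define a candidate Cuntz--Krieger $\Gamma$-family inside $\ca(\Bq^*\Gamma)$ and a unitary representation of $H$ commuting with it, as follows: for $\mu\in\Gamma$ choose any lift $n_\mu\in\bN^k$ with $\Bq(n_\mu)=d(\mu)$ and set $T_\mu := s_{(\mu,n_\mu)}\, u^{-(n_\mu - \jmath d(\mu))}$ where $u_i$ are unitaries in (a corner of) $\ca(\Bq^*\Gamma)$ implementing the $H$-shift coming from the first $r$ coordinates; one checks $\{T_\mu\}$ satisfies (CK1)--(CK4) independently of the choice of lifts, and $h\mapsto u^h$ is a unitary representation of $H$ commuting with all $T_\mu$. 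These data induce $\rho:\ca(\Gamma)\otimes\ca(H)\to\ca(\Bq^*\Gamma)$ with $\rho(s_\mu\otimes V^h)=T_\mu u^h$, and a direct check on generators gives $\rho\circ\pi=\id$ and $\pi\circ\rho=\id$. The main obstacle — and the only place needing genuine care — is constructing the unitaries $u_i\in\ca(\Bq^*\Gamma)$ implementing the $H$-translation and verifying that the resulting family $\{T_\mu\}$ is well-defined and satisfies (CK4) summed over $v(\Bq^*\Gamma)^n$; here one leans on the fact that, $\bZ^k/H$ being torsion-free, $\bN^k$ meets every coset $n_\mu+H$ cofinally, so the $u_i$ can be assembled from the generators $s_{(v,f_i)} s_{(v,0)}^*$-type elements and are genuinely unitary rather than merely isometric. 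Once $u_i$ are in hand, both composition checks are routine verifications on the standard generators, completing the proof that $\pi$ is an isomorphism.
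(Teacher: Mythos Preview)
Your key observation---that torsion-freeness forces $d_1=\cdots=d_r=1$---is exactly right and is also the starting point of the paper's argument. However, the paper's proof is considerably more direct than either route you sketch. Rather than constructing an inverse, the paper simply computes the range of $\pi$: one has
\[
\pi(\ca(\Bq^*\Gamma))=\ca\big(s_\mu s_\nu^*\otimes V^n:\ d(\mu)_i-d(\nu)_i=[n_i]_{d_i},\ 1\le i\le r\big),
\]
and when every $d_i=1$ the congruence $[n_i]_{d_i}=0$ is automatic, so the constraint becomes vacuous and the range is all of $\ca(\Gamma)\otimes\ca(H)$. Combined with the injectivity already established in Theorem~\ref{T:tensor}, this finishes the proof in two lines.

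Your inverse-construction strategy would also work, but it carries avoidable technical baggage. The unitaries $u_i$ you want do exist, but they live in the \emph{multiplier} algebra $\M(\ca(\Bq^*\Gamma))$ rather than in $\ca(\Bq^*\Gamma)$ itself (this is precisely the content of Lemma~\ref{L:W}, which appears after the corollary in the paper), so your map $\rho$ would a priori land in a multiplier algebra and you would need an extra argument. Moreover, the expression ``$s_{(v,f_i)}s_{(v,0)}^*$'' is not well-formed in general: $f_i$ is a basis vector of $\bZ^k$, not necessarily an element of $\bN^k$, so $(v,f_i)$ need not lie in $\Bq^*\Gamma$; the correct construction uses the positive and negative parts $(f_i)_\pm$ as in Lemma~\ref{L:W}. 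None of this is fatal---your outline can be made rigorous---but the paper's range computation sidesteps all of it. In short: you found the right reduction, but then worked much harder than necessary; a direct surjectivity check on the generating set $\{s_\mu s_\nu^*\otimes V^n\}$ is the efficient move.
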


\begin{proof}
Keep the same notation as in the proof of Theorem \ref{T:tensor}.
By the definition of $\pi$ in \eqref{E:pi}, one can see that the range of $\pi$ is the C*-algebra 
\begin{align*}
\fA:=\pi(\ca(\Bq^*\Gamma))
&=\ca\big(s_\mu s_\nu^* \otimes V^n: d(\mu)-d(\nu)=q(n-\jmath\circ q(n))\big)\\
&=\ca\big(s_\mu s_\nu^* \otimes V^n: d(\mu)_i-d(\nu)_i=[n_i]_{d_i}: 1\le i\le r\big).
\end{align*}
Since $\bZ^k/H$ is torsion-free, then $d_i$=1 and $d(\mu)_i=d(\nu)_i=[n_i]=0$ for $1\le i\le r$. 
Thus $s_\mu s_\nu^*\otimes V_n$ is in $\fA$ for all $\mu, \nu\in\Gamma$ and $n\in H$. 
Therefore, $\fA=\ca(\Gamma) \otimes \ca(H)$, as desired. 
\end{proof}

One important consequence of Theorem \ref{T:tensor} is given below. 
But a lemma first.

\begin{lem}
\label{L:W} 
{\rm (\cite[Proposition 3.3]{CKSS14})}
Under the same conditions as in Theorem \ref{T:tensor}. Then, for any $h\in H$, there corresponds to a unitary $W_h$ in the centre of $\M(\ca(\Bq^*\Gamma))$,
the multiplier algebra of $\ca(\Bq^*\Gamma)$, 
which is given by 
\begin{align*}
\label{E:wi}
W_h
=\text{s-}\lim_{F}\sum_{v\in F} \sum_{\lambda\in v\Gamma^{\Bq({h}_+)}} \, s_{(\lambda,{h}_+)}s_{(\lambda,{h}_-)}^*.
\end{align*}
Here the limit is taking in the strict topology as $F$ increases over finite subsets of $\Gamma^0$.
Furthermore, one has 
\[
s_{(\lambda,{h}_+)}=W_h\, s_{(\lambda,{h}_-)}=s_{(\lambda,{h}_-)} W_h \qforal \lambda \in \Gamma^{\Bq({h}_+)}.
\]
\end{lem}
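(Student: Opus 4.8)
\textbf{Proof proposal for Lemma \ref{L:W}.}

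The plan is to construct each $W_h$ directly as a strict limit of partial sums and verify its properties by hand, following the standard template for producing central unitaries in graph C*-algebras out of "comparison of range projections along a periodicity direction." First I would fix $h\in H$, write $h_+, h_-\in\bN^k$ as in the Notation section (so $h=h_+-h_-$ and $h_+\wedge h_-=0$), and for each finite $F\subseteq\Gamma^0$ set $W_h^F=\sum_{v\in F}\sum_{\lambda\in v\Gamma^{\Bq(h_+)}} s_{(\lambda,h_+)}s_{(\lambda,h_-)}^*$. The key point that makes this well-defined is that $\Bq(h_+)=\Bq(h_-)$ in $\bZ^k/H$, since $h_+-h_-=h\in H$; hence $d_{\Bq^*\Gamma}(\lambda,h_+)=h_+$ and $d_{\Bq^*\Gamma}(\lambda,h_-)=h_-$ both have source $s(\lambda)$, so each summand $s_{(\lambda,h_+)}s_{(\lambda,h_-)}^*$ is a partial isometry with initial projection $s_{(\lambda,h_-)}s_{(\lambda,h_-)}^*$ and final projection $s_{(\lambda,h_+)}s_{(\lambda,h_+)}^*$. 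Using (CK3) and (CK2), for fixed $v$ the family $\{s_{(\lambda,h_-)}s_{(\lambda,h_-)}^*:\lambda\in v\Gamma^{\Bq(h_+)}\}$ consists of mutually orthogonal projections summing (via (CK4)) to $s_v$, and likewise for the $h_+$ side; therefore $W_h^F$ is a partial isometry with initial and final projection $\sum_{v\in F}s_v$. It follows that the net $(W_h^F)$ is uniformly bounded and, because enlarging $F$ only adds new partial isometries with orthogonal (initial and final) supports, it is strictly Cauchy; I would cite the strict-topology convergence argument exactly as in \cite[Proposition 3.3]{CKSS14}. Call the limit $W_h$.

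Next I would check that $W_h$ is a unitary multiplier. Since $\sum_{v\in F}s_v\to 1$ strictly (an approximate identity for $\ca(\Bq^*\Gamma)$ built from the vertex projections), we get $W_h^*W_h=W_hW_h^*=1$ in $\M(\ca(\Bq^*\Gamma))$ by passing to the limit in $W_h^{F*}W_h^F=W_h^FW_h^{F*}=\sum_{v\in F}s_v$. For centrality it suffices to show $W_h$ commutes with every generator $s_{(\mu,m)}$, and by density and the adjoint it is enough to compute $W_h\,s_{(\mu,m)}$ and $s_{(\mu,m)}\,W_h$ and see they agree. Here one uses the factorization property in $\Bq^*\Gamma$: writing $m=m'+m''$ appropriately and pushing the tail $(\lambda,h_\pm)$ past $(\mu,m)$ via (CK2)/(CK3), the sums telescope; this is the one genuinely computational step, and it is the place I expect to spend the most effort, since one has to bookkeep the indices in $\bZ^k$ versus $\bN^k$ and invoke row-finiteness to rearrange the (strictly convergent) sums. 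The cleanest route is probably to first establish the displayed identities $s_{(\lambda,h_+)}=W_h\,s_{(\lambda,h_-)}=s_{(\lambda,h_-)}\,W_h$ for $\lambda\in\Gamma^{\Bq(h_+)}$ — multiply the defining net on the right by $s_{(\lambda,h_-)}$, use orthogonality of the initial projections to collapse the sum to the single term indexed by $\lambda$, and get $W_h\,s_{(\lambda,h_-)}=s_{(\lambda,h_+)}$; the other equality follows symmetrically or from centrality once that is in hand — and then bootstrap centrality against arbitrary $s_{(\mu,m)}$ by factoring $(\mu,m)$ through paths of degree $\Bq(h_+)$ using the factorization property and (CK4).

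The main obstacle, as indicated, is the index bookkeeping in the commutation/telescoping step: unlike the $k$-graph case where $h\in\bN^k$, here $h\in H\subseteq\bZ^k$ is not in the monoid, so one cannot directly extend a path by $h$; one must split into $h_+$ and $h_-$ and argue that the two range-projection sums $\sum_\lambda s_{(\lambda,h_+)}s_{(\lambda,h_+)}^*$ and $\sum_\lambda s_{(\lambda,h_-)}s_{(\lambda,h_-)}^*$ are "the same" in the strict sense, which is exactly what the unitarity of $W_h$ encodes. Since all of this is carried out in \cite[Proposition 3.3]{CKSS14} under the stated hypotheses (which are identical to those of Theorem \ref{T:tensor}), the honest proof here is to transcribe that argument, check that nothing in it used $H\subseteq\bN^k$ rather than $H\subseteq\bZ^k$ (it does not — only $\Bq(h_+)=\Bq(h_-)$ and row-finiteness/source-freeness are used), and record the displayed consequences.
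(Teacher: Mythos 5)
Your proposal is correct and takes essentially the same route as the paper: the paper offers no proof of Lemma \ref{L:W} at all, simply importing it from \cite[Proposition 3.3]{CKSS14}, and your reconstruction (partial sums $W_h^F$ as partial isometries with initial and final projection $\sum_{v\in F}s_v$ via $\Bq(h_+)=\Bq(h_-)$ and (CK1)--(CK4), strict convergence and unitarity, the collapsing computation giving $W_h\,s_{(\lambda,h_-)}=s_{(\lambda,h_+)}$, with the centrality bookkeeping deferred to the cited proposition) is precisely the argument being invoked there.
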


Keep the same notation as in the proof of Theorem \ref{T:tensor}. Suppose that the rank of $H$ is $r$ ($\ge 1$ WLOG) and 
$H=\langle h_i:=d_if_i\mid 1\le i\le r\rangle$. To simplify our writing, for $1\le i\le r$, we use $W_i:=W_{h_i}$ to denote the central unitaries determined
by $h_i$ in Lemma \ref{L:W}, and also use the multi-index notation
$W^n=\prod_{i=1}^r W_i^{n_i}$ for $n=(n_1,\ldots, n_r)\in \bZ^r$.

\begin{cor}
\label{C:tensor}
Under the same conditions as in Theorem \ref{T:tensor}. Then
\begin{itemize}
\item[(i)]
 $\fD_{\Bq^*\Gamma}\cong \fD_\Gamma$ and $\fF_{\Bq^*\Gamma}\cong \fF_\Gamma$,
 \item[(ii)]
$\fF_{\Bq^*\Gamma}\ca(W_h: h\in H) \cong \fF_{\Bq^*\Gamma}\otimes \ca(W_h:h\in H).$
\end{itemize}
\end{cor}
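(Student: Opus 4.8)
The plan is to exploit the equivariance already established in Theorem~\ref{T:tensor}, transferring everything through the injection $\pi$ of \eqref{E:pi}, and then to identify the images of the relevant subalgebras explicitly. First I would record that $\pi$ restricts to an isomorphism of $\fF_{\Bq^*\Gamma}$ onto its image: indeed $\pi(s_{(\mu,n)}s_{(\nu,m)}^*) = s_\mu s_\nu^* \otimes V^{(n-\jmath\circ d(\mu))-(m-\jmath\circ d(\nu))}$, and on the fixed point algebra one has $n=m$ and $d(\mu)=d(\nu)$, so $n-\jmath\circ d(\mu) = m-\jmath\circ d(\nu)$ and the exponent vanishes. Hence $\pi$ sends $\fF_{\Bq^*\Gamma}$ into $\fF_\Gamma\otimes I$, and the surjectivity of this corestriction follows because $s_\mu s_\nu^*\otimes I = \pi(s_{(\mu,n)}s_{(\nu,n)}^*)$ for any $n\in\bN^k$ with $\Bq(n)=d(\mu)=d(\nu)$, which exists by surjectivity of $\Bq$. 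The same computation restricted to $\mu=\nu$, $n=m$ gives $\fD_{\Bq^*\Gamma}\cong\fD_\Gamma$. This proves (i).

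For (ii), I would first compute $\pi(W_h)$ using the formula in Lemma~\ref{L:W}. Since $\pi$ is a $*$-homomorphism between C*-algebras, it extends to the multiplier algebras and is strictly continuous on bounded sets, so $\pi(W_h)$ is the strict limit of $\sum_{v\in F}\sum_{\lambda\in v\Gamma^{\Bq(h_+)}} \pi(s_{(\lambda,h_+)})\pi(s_{(\lambda,h_-)}^*)$. Now $\pi(s_{(\lambda,h_+)})\pi(s_{(\lambda,h_-)}^*) = s_\lambda s_\lambda^* \otimes V^{(h_+-\jmath\circ d(\lambda))-(h_--\jmath\circ d(\lambda))} = s_\lambda s_\lambda^*\otimes V^{h_+-h_-}$; but $h_+ - h_- = h$, so this equals $s_\lambda s_\lambda^*\otimes V^{h}$ (interpreting the multi-index appropriately, using that $h\in H=\langle h_1,\dots,h_r\rangle$). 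Summing over $\lambda\in v\Gamma^{\Bq(h_+)}$ and then over $v\in F$ and passing to the strict limit, the $(CK4)$-relation for $\{s_\mu\}$ collapses $\sum_v\sum_\lambda s_\lambda s_\lambda^* = \sum_v s_v$, which converges strictly to $I$. Hence $\pi(W_h) = I\otimes V^h$. Consequently $\pi$ carries $\fF_{\Bq^*\Gamma}\,\ca(W_h:h\in H)$ onto $(\fF_\Gamma\otimes I)\cdot(I\otimes \ca(V^h:h\in H)) = \fF_\Gamma\otimes\ca(H)$, where I use that $\{V^h:h\in H\}$ generates $\ca(H)$.

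It remains to see that this last algebra is the (spatial, hence unique) tensor product $\fF_{\Bq^*\Gamma}\otimes\ca(W_h:h\in H)$. I would argue as follows: the Wold-type factorization $s_\mu s_\nu^*\otimes V^h = (s_\mu s_\nu^*\otimes I)(I\otimes V^h)$ inside $\fF_\Gamma\otimes\ca(H)$ shows $\fF_\Gamma\otimes\ca(H)$ is generated by the two commuting subalgebras $\fF_\Gamma\otimes I \cong \fF_{\Bq^*\Gamma}$ and $I\otimes\ca(H)\cong\ca(W_h:h\in H)$, so there is a natural surjection from the algebraic tensor product onto the product algebra. That this surjection is isometric for the minimal norm is automatic here because $\ca(H) = C^*(\widehat H)$ is commutative (in fact nuclear), so the product subalgebra $\fF_\Gamma\otimes\ca(H)$ of $\ca(\Gamma)\otimes\ca(H)$ is itself a spatial tensor product — the inclusions $\fF_\Gamma\hookrightarrow\B(\H_1)$ and $\ca(H)\hookrightarrow\B(\H_2)$ realize it on $\H_1\otimes\H_2$. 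Pulling back through the isomorphism $\pi|$ then gives $\fF_{\Bq^*\Gamma}\,\ca(W_h:h\in H)\cong\fF_{\Bq^*\Gamma}\otimes\ca(W_h:h\in H)$, as claimed.

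The main obstacle I anticipate is the bookkeeping around the multi-index conventions: the paper writes $W^n$ and $V^n$ only for $n\in\bZ^r$ supported on the first $r$ coordinates, whereas $h\in H$ is naturally an element $\sum d_i f_i$-combination, so I must be careful that ``$V^h$'' and ``$W^h$'' are the intended unambiguous unitaries, matching $V^h = \prod V_i^{n_i}$ when $h=\sum n_i h_i$. Once the identification $\pi(W_h)=I\otimes V^h$ is pinned down cleanly, the rest is the routine ``product of commuting subalgebras, one of which is commutative, is a spatial tensor product'' argument, and the equivariance of $\pi$ already does the heavy lifting for (i).
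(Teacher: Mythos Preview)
Your approach is the paper's: push everything through the injection $\pi$ of Theorem~\ref{T:tensor} and identify the images. Part~(i) is correct and matches the paper verbatim. For~(ii), computing $\pi(W_h)$ term by term and then recognising the image as a product of two commuting subalgebras (one commutative) inside a spatial tensor product is exactly what the paper does, only the paper carries out the computation on a general element $s_{(\mu,m)}s_{(\nu,m)}^*W^n$ in one stroke rather than splitting it as you do.

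There is, however, a genuine slip in your bookkeeping---precisely the obstacle you flagged. In the paper's convention, $V^n=\prod V_i^{n_i}$ where $n_i$ are the coordinates of $n$ in the basis $\{f_1,\dots,f_k\}$, \emph{not} in the basis $\{h_1,\dots,h_r\}$ of $H$. So for $h=\sum m_ih_i=\sum m_id_if_i\in H$ one has $V^h=\prod V_i^{m_id_i}$, and in particular $V^{h_i}=V_i^{d_i}$. Your computation $\pi(W_h)=I\otimes V^h$ is correct with this convention, but then your claim that $\{V^h:h\in H\}$ generates $\ca(H)$ is false whenever some $d_i>1$: it only generates the proper subalgebra $\ca(V_i^{d_i}:1\le i\le r)$. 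Equivalently, the convention you propose at the end (``$V^h=\prod V_i^{n_i}$ when $h=\sum n_ih_i$'') is \emph{not} the one that makes your computation of $\pi(W_h)$ valid.

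The fix is immediate and the paper makes it: the image of $\fF_{\Bq^*\Gamma}\,\ca(W_h:h\in H)$ under $\pi$ is $\fF_\Gamma\otimes\ca(V_i^{d_i}:1\le i\le r)$, not $\fF_\Gamma\otimes\ca(H)$. Your tensor-product argument then gives the desired isomorphism once you observe (via injectivity of $\pi$) that $\ca(W_h:h\in H)\cong\ca(V_i^{d_i}:1\le i\le r)$. One further minor point: the extension of $\pi$ to multipliers requires nondegeneracy, not merely that $\pi$ be a $*$-homomorphism; this holds here since $\pi(s_v)=s_v\otimes I$ and the finite sums $\sum_{v\in F}s_v\otimes I$ form an approximate unit for $\ca(\Gamma)\otimes\ca(H)$.
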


\begin{proof}
Keep the same notation as in the proof of Theorem \ref{T:tensor}. 

\smallskip
(i) This follows from the definition of $\pi$ (cf. \eqref{E:pi}) as 
$\pi(s_{(\mu, m)}s_{(\nu, m)}^*) = s_\mu s_\nu^*\otimes I$ for all $(\mu,m), (\nu,m)\in\Bq^*\Gamma$. 

(ii) 
Let $\fA:=\fF_{\Bq^*\Gamma}\ca(W_h: h\in H)$. We first claim that 
\begin{align*}
\pi(\fA) = \fF_\Gamma\otimes \ca(V_i^{d_i}:1\le i\le r).
\end{align*}
To this end, notice that   
\begin{align}
\nonumber
\pi\big(s_{(\lambda,{h_i}_+)}s_{(\lambda,{h_i}_-)}^*\big)
\nonumber
&=\big(s_{\lambda}\otimes V^{{{h_i}_+}-\jmath\circ  d(\lambda)}\big)\big(s_{\lambda}^*\otimes {(V^*)}^{{{h_i}_-}-\jmath\circ  d(\lambda)}\big)\\
\nonumber
&=\big(s_{\lambda}s_{\lambda}^*\big)\otimes V^{{{h_i}_+}-\jmath\circ  d(\lambda)- {{{h_i}_-}+\jmath\circ  d(\lambda)}}\\
\nonumber
&=s_{\lambda}s_{\lambda}^*\otimes V^{h_i}\\
\label{E:1}
&=s_{\lambda}s_{\lambda}^*\otimes V_i^{d_i}.
\end{align}
Now arbitrarily take a standard element  $s_\mu s_\nu^* \in \fF_\Gamma$ and $n=(n_1,\ldots, n_r,0,\ldots, 0)\in \bZ^k$.
Consider $s_{(\mu,m)}s_{(\nu,m)}^*W^n\in \fA$ where $\Bq(m)=d(\mu)(=d(\nu))$. 
We derive from Lemma \ref{L:W} and \eqref{E:1} that\footnote{Below we use the usual convention: for an operator $A$, $A^k=(A^*)^{-k}$ if $k<0$.}
\begin{align*}
\nonumber
&\pi\left(s_{(\mu,m)}s_{(\nu,m)}^*\, W^n\right)\\
\nonumber
&= \text{s-}\lim_F\sum_{v\in F} \left\{ \big(s_{\mu}\otimes W^{m-\jmath\circ  \Bq(m)}\big) \big(s_{\nu}\otimes W^{m-\jmath\circ  \Bq(m)}\big)^* 
       \prod_i \left(\sum_{\lambda\in v\Gamma^{\Bq({h_i}_+)}} \, 
      s_{\lambda}s_{\lambda}^* \otimes V_i^{d_i}\right)^{n_i}\right\}\\
\nonumber
&= \text{s-}\lim_F\sum_{v\in F} \left\{ (s_{\mu}s_\nu^*\otimes I) \prod_i \left(\sum_{\lambda\in v\Gamma^{\Bq({h_i}_+)}} \, 
      s_{\lambda}s_{\lambda}^*\right)^{n_i} \otimes V_i^{d_in_i}\right\}\\
\nonumber
&= \text{s-}\lim_F\sum_{v\in F}\left\{s_\mu s_\nu^*\prod_i \left(\sum_{\lambda\in v\Gamma^{\Bq({h_i}_+)}} 
      s_{\lambda}s_{\lambda}^*\right)^{n_i} \otimes V^{dn}\right\}\\
\nonumber
&=\left\{\text{s-}\lim_{F}\sum_{v\in F} s_{\mu} s_\nu^*\prod_i \left(\sum_{\lambda\in v\Gamma^{\Bq({h_i}_+)}} \, s_{\lambda}s_{\lambda}^*\right)^{n_i} \right\}
     \otimes V^{dn}\\
\nonumber
\label{E:piA}
&= s_{\mu} s_\nu^* \otimes V^{dn}.
\end{align*}
Here the last ``="  above used the fact that, for each $1\le i\le r$,
$\sum_{\lambda\in v\Gamma^{\Bq({h_i}_+)}} \, s_{\lambda}s_{\lambda}^*$ 
is strictly convergent to the identity of the multiplier algebra of $\ca(\Gamma)$.
This fact can be proved by a standard argument (cf. the proof of Proposition 3.3 of \cite{CKSS14}).
Therefore, we have proved our claim. 

Combining the injectivity of $\pi$ thanks to Theorem \ref{T:tensor} with $\fF_\Gamma\cong \fF_{\Bq^*\Gamma}$ from (i), 
one now has $\fA\cong \fF_{\Bq^*\Gamma}\otimes \ca(V_i^{d_i}:1\le i\le r)$.
From \eqref{E:1}, in the multiplier algebra $\M(\ca(\Bq^*\Gamma))$, one can verify that $\ca(W_i:1\le i\le r)\cong \ca(V_i^{d_i}:1\le i\le r)$. 
Hence $\fA\cong\fF_{\Bq^*\Gamma}\otimes \ca(W_h:h\in H)$, which ends the proof. 
\end{proof}

Let us remark that Corollary \ref{C:tensor} (i) can also be seen directly from the following identity, which generalizes the one given in Lemma \ref{L:W}.

\textit{If $(\mu, m)\in \Bq^*\Gamma$ and $n\in \bN^k$ such that $h:=n-m\in H$, then 
\[
s_{(\mu, n)}=W_h s_{(\mu,m)}=s_{(\mu,m)} W_h.
\]
}
In fact, assume that $h=h_+-h_-$. Then $n+h_-=m+h_+$. Let $\lambda\in\Gamma$ satisfying $r(\lambda)=s(\mu)$
and $d(\lambda)=h_-$. So $(\lambda, h_+)$ also belongs to $\Bq^*\Gamma$.  
Then by Lemma \ref{L:W}
\[
s_{(\mu,n)}s_{(\lambda, h_-)}=s_{(\mu\lambda,n+h_-)}=s_{(\mu\lambda,m+h_+)}=s_{(\mu,m)}s_{(\lambda, h_+)}=s_{(\mu,m)}W_h s_{(\lambda, h_-)}.
\]
Hence 
\[
s_{(\mu,n)}\sum_{\lambda\in s(\mu)\Gamma^{h_-}} s_{(\lambda, h_-)}s_{(\lambda, h_-)}^*=s_{(\mu,m)} W_h \sum_{\lambda\in s(\mu)\Gamma^{h_-}}  s_{(\lambda, h_-)}s_{(\lambda, h_-)}^*,
\]
which implies $s_{(\mu, n)}=W_h s_{(\mu,m)}$.

\smallskip

An important and interesting application of Theorem \ref{T:tensor} and Corollary \ref{C:tensor} will be exhibited in Section
\ref{S:k-str}, after we investigate pushouts of $P$-graphs.
But we need to study periodicity of $P$-graphs first. This is given in next section.

\section{Periodicity}
\label{S:Periodicity}

It is known that periodicity of higher rank graphs plays a very important role in the structure of their C*-algebras. 
See, e.g.,  \cite{CKSS14, DY09a, DY09b, HLRS14, LS10, Raeburn, RS07, Sho12}. 
This section provides an in-depth analysis on periodicity of $P$-graphs.
We give some natural characterizations of the aperiodicity defined in \cite{KumPask} (via infinite paths), in terms of the triviality of  
$\Per\Lambda$ (coinciding with our notion of aperiodicity) and/or local periodicities. One crucial observation is the interplay between the (global) periodicity of $\Lambda$ 
and its local periodicities.
Applying those results, we prove that all pullbacks are periodic. We believe that our results of this section may be of independent interest and 
be useful in studying $P$-graphs in the future. 

Let  $P$ be a monoid and $\Lambda$ be a $P$-graph. Define an equivalence relation $\sim$ on $\Lambda$ as follows:
\begin{align}
\label{D:sim}
\xi\sim\eta \Longleftrightarrow s(\xi)=s(\eta) \text{ and } \xi x=\eta x \text{ for all }x\in s(\xi)\Lambda^\infty.
\end{align}
If $\xi\sim\eta$, obviously one also has $r(\xi)=r(\eta)$ automatically. So $\sim$ respects sources and ranges. 

Associate to $\sim$ two important sets: the \textit{periodicity of $\Lambda$}
\[
\Per\Lambda=\big\{d(\xi)-d(\eta): \xi,\eta\in\Lambda, \ \xi\sim\eta\big\}\subseteq\G(P),
\]
and

\begin{align*}
\Lambda^0_{\Per}=\left\{
v\in\Lambda^0\mid
\begin{array}{l}
\text{any } \xi\in v\Lambda, \,p\in P \text{ with } d(\xi)-p\in \Per\Lambda\\
\Longrightarrow\text {there is }\eta\in v\Lambda^p \text{ such that }\xi\sim\eta
\end{array}\right\}.
\end{align*}
Here are some versions of  `local periodicity':
\begin{align*}
\Sigma_v&=\big\{(m,n)\in P\times P: \sigma^m (x)=\sigma^n (x)\text{ for all }x\in v\Lambda^\infty\big\}\quad (v\in \Lambda^0),\\
\Per_v&=\{m-n: (m,n)\in \Sigma_v\} \quad (v\in \Lambda^0),\\
\Sigma_\Lambda&=\bigcup_{v\in\Lambda^0} \Sigma_v.
\end{align*}

The above notion of periodicity, $\Per\Lambda$, was first introduced in \cite{DY09a, DY09b} to study the representation theory of single-vertex $k$-graph algebras. 
It has been substantially generalized in \cite{CKSS14} to all $k$-graphs (also cf. \cite{HLRS14}).
The set $\Lambda_{\Per}^0$ was used in \cite{CKSS14} to construct a subgraph of $\Lambda$, while $\Sigma_v$ is related to a notion of local periodicity of $\Lambda$ at $v$
in \cite{RS07, LS10, Sho12}.

\subsection{Characterizations of aperiodicity}

In this subsection, let us fix a monoid $P$ and a $P$-graph $\Lambda$. 

\begin{lem}
\label{L:uniqueness}
Let $\lambda, \mu$ in  $\Lambda$ be such that $\lambda\sim \mu$ and $d(\mu)=d(\nu)$. Then $\lambda=\mu$.   
\end{lem}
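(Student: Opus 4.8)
The plan is to recover $\lambda$ (and likewise $\mu$) from the initial segment of a suitable infinite path, using that $\Lambda$ is row-finite and source-free. (I read the hypothesis $d(\mu)=d(\nu)$ as the typo $d(\lambda)=d(\mu)$.)

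First I would fix some $x\in s(\lambda)\Lambda^\infty$; such $x$ exists because $\Lambda$ is row-finite and source-free, so that $v\Lambda^\infty\ne\varnothing$ for every $v\in\Lambda^0$ (by the usual inductive construction of an infinite path out of $v$). Since $\lambda\sim\mu$ we have $s(\lambda)=s(\mu)$, so $\mu x$ is also defined, and by the definition of $\sim$ we have $\lambda x=\mu x$. Next I would evaluate both infinite paths at $(0,d(\lambda))\in\Omega_P$. Using the defining property of $\mu x$ with $p=d(\mu)$ (so that $p-d(\mu)=0\in P$), the fact that $x(0,0)=r(x)=s(\lambda)=s(\mu)$, and the hypothesis $d(\lambda)=d(\mu)$, one gets
\[
\lambda=\lambda\cdot x(0,0)=(\lambda x)(0,d(\lambda))=(\mu x)(0,d(\mu))=\mu\cdot x(0,0)=\mu,
\]
which is the claim.

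I do not expect any real obstacle here; the only point deserving a word of justification is the non-emptiness of $s(\lambda)\Lambda^\infty$, which is precisely where the standing hypotheses of row-finiteness and source-freeness are used. The factorization property is what guarantees that $(\lambda x)(0,d(\lambda))$ equals $\lambda$ on the nose, so the argument is essentially a one-line computation once the infinite path $x$ is in hand.
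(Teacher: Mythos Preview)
Your proof is correct and is essentially identical to the paper's own argument: fix $x\in s(\lambda)\Lambda^\infty$, use $\lambda x=\mu x$ from $\lambda\sim\mu$, and read off $\lambda=(\lambda x)(0,d(\lambda))=(\mu x)(0,d(\mu))=\mu$. Your extra remarks on the existence of $x$ and the interpretation of the typo are reasonable glosses but add nothing new to the approach.
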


\begin{proof}
Fix $x\in s(\lambda)\Lambda^\infty$. Since $\lambda\sim \mu$, we have $\lambda x=\mu x$. This implies $\lambda=(\lambda x)(0,d(\lambda))
=(\lambda x)(0,d(\mu))=(\mu x)(0,d(\mu))=\mu$.
\end{proof}

\begin{lem}
\label{L:xmn}
Let $v\in \Lambda^0$ and $(m,n)\in\Sigma_v$. Then 
$x(m,m+n)\sim x(n,m+n)$ for all $x\in v\Lambda^\infty$.
\end{lem}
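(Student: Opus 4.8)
The plan is to show that the pair $\xi := x(m, m+n)$ and $\eta := x(n, m+n)$ satisfy the defining condition of $\sim$ in \eqref{D:sim}, namely that $s(\xi) = s(\eta)$ and $\xi z = \eta z$ for every $z \in s(\xi)\Lambda^\infty$. First I would record the easy structural facts: since $x$ is a $P$-graph morphism $\Omega_P \to \Lambda$ and $(m,n) \in \Sigma_v \subseteq \Omega_P \times \Omega_P$ (so in particular $m, n \in P$ and both $m, n \le m+n$ in the sense that $m+n - m, m+n-n \in P$), the paths $x(m,m+n)$ and $x(n,m+n)$ are defined, have degrees $d(\xi) = n$ and $d(\eta) = m$, and share the common source $s(\xi) = x(m+n, m+n) = s(\eta)$, which equals $s(x(0,m+n))$ and ranges over the appropriate vertex. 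Thus $\sim$ will indeed give $d(\xi) - d(\eta) = n - m \in \Per\Lambda$, matching the intended use of the lemma.

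The core step is the infinite-path identity. Fix $z \in s(\xi)\Lambda^\infty$ and set $w := x(0,m) z$ — that is, the infinite path obtained by prepending the finite path $x(0,m)$ (whose source is $x(m,m) = r(z)$ after we check $r(z) = s(x(0,m))$, which holds because $r(z) = r(\xi) = x(m,m)$) to $z$. Wait — I need the prepended path to align with $v\Lambda^\infty$, so more carefully I would instead take $y := x(0,m)\, z \in v\Lambda^\infty$ using that $x(0,m)$ starts at $v = x(0,0)$ and ends at $x(m,m) = r(\xi) = r(z)$. Now apply the hypothesis $(m,n) \in \Sigma_v$ to $y$: we get $\sigma^m(y) = \sigma^n(y)$. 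By the factorization/shift formulas for infinite paths, $\sigma^m(y) = \sigma^m(x(0,m) z)$, and one computes from the definition $\sigma^p(u)(q,r) = u(p+q, p+r)$ together with the way $\mu\cdot(\text{path})$ is defined that $\sigma^m(y) = z$ while $\sigma^n(y) = \sigma^{n}\big(x(0,m) z\big)$; here I would expand $x(0,m)z$ restricted to $[0, \text{large}]$, use that it agrees with $x$ on the initial segment of length $m$, and peel off. The upshot, after these bookkeeping computations, is that $\xi z$ and $\eta z$ are the two ways of writing $y(0, \text{something})\cdot(\text{common tail})$, and $\sigma^m(y) = \sigma^n(y)$ translates precisely into $\xi z = \eta z$. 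Concretely: $y(0, m + n) = x(0,m+n)$ on one hand; factoring $x(0,m+n) = x(0,m)\,\xi$ and also (rearranging via the shift equality) $= x(0,n)\,\eta$ after prepending/appending appropriately, and then $\xi z$, $\eta z$ both equal the tail of $\sigma^m(y) = \sigma^n(y)$ read off from position $m$ (resp. $n$) — I would make this rigorous by evaluating both $\xi z$ and $\eta z$ at an arbitrary $(0, p)$ and reducing to $\sigma^m(y)(\cdot) = \sigma^n(y)(\cdot)$.

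The main obstacle I anticipate is purely notational: carefully tracking how $\sigma^m$ and $\sigma^n$ interact with the prepended finite path $x(0,m)$, i.e. verifying the identities $\sigma^m(x(0,m)z) = z$ and $\sigma^n(x(0,m)z) = \xi z$ (or the symmetric pair), directly from the definitions of $\sigma^p$ and of $\mu x$. Once these two shift computations are in hand, the equality $\sigma^m(y) = \sigma^n(y)$ furnished by $(m,n)\in\Sigma_v$ gives $\xi z = \eta z$ immediately, and since $z \in s(\xi)\Lambda^\infty$ was arbitrary and sources already match, $\xi \sim \eta$, which is exactly $x(m,m+n) \sim x(n,m+n)$. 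No deep input is needed beyond the definition of $\sim$ and the elementary shift identities; the lemma is essentially an unwinding of definitions, with Lemma \ref{L:uniqueness} not required here but clearly the intended companion for later use.
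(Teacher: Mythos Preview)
Your overall strategy is the same as the paper's, but there is a concrete error in the concatenation step that makes the argument, as written, break down. You take $z \in s(\xi)\Lambda^\infty$ and try to form $y := x(0,m)\,z$. For this to be defined you need $r(z) = s(x(0,m)) = x(m,m)$; however $r(z) = s(\xi) = s\big(x(m,m+n)\big) = x(m+n,m+n)$, not $x(m,m)$. Your claim ``$r(z) = r(\xi) = x(m,m)$'' confuses $r(\xi)$ with $s(\xi)$. Consequently $x(0,m)z$ is not in general a well-defined infinite path, and even if it were, the second identity you aim for, $\sigma^n\big(x(0,m)z\big) = \xi z$, would not hold: the degree-$n$ initial segment of the left-hand side would be $x(n,\cdot)$-type, not $\xi = x(m,m+n)$.

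The easy repair is to prepend the longer path $x(0,m+n)$ instead: set $y := x(0,m+n)\,z \in v\Lambda^\infty$ (now the sources match, since $s(x(0,m+n)) = x(m+n,m+n) = r(z)$). Then $\sigma^m(y) = x(m,m+n)\,z = \xi z$ and $\sigma^n(y) = x(n,m+n)\,z = \eta z$, and applying $(m,n)\in\Sigma_v$ to $y$ gives $\xi z = \eta z$ directly. This is exactly what the paper does, phrased as $x(0,n)\mu y = x(0,m)\nu y$ (both equal $x(0,m+n)y$) followed by $\sigma^m = \sigma^n$ on that common infinite path.
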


\begin{proof}
Let $x\in v\Lambda^\infty$, $\nu:=x(m,m+n)$, and $\mu=x(n,m+n)$. Obviously, $s(\mu)=s(\nu)$. 

In the sequel, we first show 
\begin{align}
\label{E:xm}
x(0,m)\nu=x(0,n)\mu.
\end{align}
Indeed, by the factorization property, there are $\nu'\in v\Lambda^n$ and $\mu'\in s(\nu')\Lambda^m$ such that 
\[
x(0,m+n)=x(0,m)\nu=\nu'\mu'.
\]
But
\[
\nu'\mu'=x(0,m+n)=x(0,n)\mu'
\]
implies 
$
\nu'=x(0,n).
$
So 
\[
x(0,m)\nu=x(0,n)\mu'.
\]
Thus 
$x(0,m)\nu\sigma^{m+n}(x)=x(0,n)\mu'\sigma^{m+n}(x)$.
One now has
$\nu\sigma^{m+n}(x)=\mu'\sigma^{m+n}(x)$
as $(m,n)\in\Sigma_v$. 
But $\sigma^m (x)=\nu\sigma^{m+n}(x)$ and $\sigma^m (x)=\sigma^n (x)$, one has 
$\sigma^n (x)=\mu'\sigma^{m+n}(x)$, implying 
\[
\mu'=(\sigma^n (x))(0,m)=x(n,n+m)=\mu.
\]
This proves \eqref{E:xm}.

Now let $y\in s(\mu)\Lambda^\infty$. Then it follows from $(m,n)\in\Sigma_v$ and \eqref{E:xm} that 
\[
\mu y=\sigma^n(x(0,n)\mu y)=\sigma^m(x(0,n)\mu y)=\sigma^m(x(0,m)\nu y)=\nu y.
\]
This proves $\mu\sim \nu$. 
\end{proof}

The following result describes the relation between the periodicity of infinite paths and that induced from the equivalence relation $\sim$ 
defined by \eqref{D:sim}. Notice that it was proved in \cite{CKSS14} under the condition that $\Lambda^0$ is a maximal tail.

\begin{thm}
\label{T:SHLambda}
Let $m,n\in P$. Then 
$(m,n)\in \Sigma_\Lambda \Longleftrightarrow m-n\in \Per\Lambda$.
\end{thm}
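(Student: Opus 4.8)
The plan is to prove both directions by relating the local-periodicity sets $\Sigma_v$ to the global equivalence relation $\sim$, using the two lemmas just established (Lemma \ref{L:uniqueness} and Lemma \ref{L:xmn}) together with the factorization property and a compactness-type argument over the (finitely many, by row-finiteness) paths of a fixed degree out of a vertex.

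For the forward direction ``$(m,n)\in\Sigma_\Lambda\Rightarrow m-n\in\Per\Lambda$'', suppose $(m,n)\in\Sigma_v$ for some $v\in\Lambda^0$. Pick any $x\in v\Lambda^\infty$ (which exists since $\Lambda$ is source-free), and set $\nu:=x(m,m+n)$, $\mu:=x(n,m+n)$. Lemma \ref{L:xmn} gives $\mu\sim\nu$ directly, and by construction $d(\nu)=n$ while $d(\mu)=m$, so $d(\mu)-d(\nu)=m-n\in\Per\Lambda$. This direction is essentially immediate from Lemma \ref{L:xmn}.

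For the reverse direction ``$m-n\in\Per\Lambda\Rightarrow(m,n)\in\Sigma_\Lambda$'', suppose $\xi\sim\eta$ with $d(\xi)-d(\eta)=m-n$; write $p:=d(\xi)$, $q:=d(\eta)$, so $p-q=m-n$, hence $p+n=q+m$. Let $v:=r(\xi)=r(\eta)$. I would aim to show $(p+n, q+m)=(p+n,p+n)$ is trivially in $\Sigma$ — that is not the point; rather I want to produce a genuine pair in some $\Sigma_w$ whose difference is $m-n$. The natural candidate is to take $w=r(\xi)$ and show $(p,q)\in\Sigma_w$, i.e. $\sigma^p(x)=\sigma^q(x)$ for every $x\in w\Lambda^\infty$. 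Given such an $x$, factor $x(0,p)$ and $x(0,q)$: both have range $w$. The difficulty is that $x(0,p)$ need not equal $\xi$ and $x(0,q)$ need not equal $\eta$. To handle this I would use that $\sim$ is compatible with composition and sources/ranges in the following sense: if $\xi\sim\eta$ then for any $\alpha$ with $s(\alpha)=r(\xi)$ one has $\alpha\xi\sim\alpha\eta$, and more importantly for any $\beta$ with $r(\beta)=s(\xi)=s(\eta)$ one has $\xi\beta\sim\eta\beta$ (both follow readily from the definition \eqref{D:sim}, since $\xi(\beta y)=\eta(\beta y)$ for all suitable $y$). Then, given $x\in w\Lambda^\infty$, I would need to connect $x(0,p)$ to $\xi$: this requires knowing that \emph{every} path of degree $p$ out of $w$ is $\sim$-equivalent to \emph{some} path of degree $q$ out of $w$, which is exactly the content of $w\in\Lambda^0_{\Per}$ — but that is not assumed for arbitrary $v$.

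The honest route, and the one I expect the paper takes, is instead to not fix the vertex in advance: given $\xi\sim\eta$ with $d(\xi)-d(\eta)=m-n$, note $m-n = (d(\xi)+n') - (d(\eta)+n')$ after adjusting, and more directly observe that $\xi\sim\eta$ forces, for every $x\in s(\xi)\Lambda^\infty$, the identity $\xi x=\eta x$, from which $\sigma^{d(\xi)}(\xi x)=\sigma^{d(\eta)}(\eta x)$, i.e. $\sigma^{d(\xi)}((\xi x))=\sigma^{d(\eta)}((\xi x))$ since $\eta x=\xi x$; wait — rather, $\sigma^{d(\xi)}(\xi x)=x=\sigma^{d(\eta)}(\eta x)=\sigma^{d(\eta)}(\xi x)$, so $(d(\xi),d(\eta))\in\Sigma_{r(\xi)}$ because for the \emph{specific} infinite paths of the form $\xi x$ the shifts agree. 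The gap is that $r(\xi)\Lambda^\infty$ contains paths not beginning with $\xi$. To close it, use Lemma \ref{L:uniqueness}: if $d(\xi)=d(\eta)$ the statement is trivial ($m=n$, both sides in $\Sigma$ automatically), so assume $d(\xi)\neq d(\eta)$; then replace $(\xi,\eta)$ by a family ranging over \emph{all} extensions — precisely, for an arbitrary $z\in r(\xi)\Lambda^\infty$, I claim $z(0,d(\xi))\sim z(0,d(\eta))$ whenever $d(\xi)-d(\eta)\in\Per\Lambda$, and this is where I would invoke that $\Per\Lambda$ is actually a group (to be established elsewhere in Section \ref{S:Periodicity}) and that $\sim$-equivalence propagates along the graph: the set of vertices $v$ for which every degree-$p$ path is $\sim$-related to a degree-$q$ path is nonempty and ``hereditary/saturated'' enough. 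I expect \textbf{this propagation step — upgrading a single equivalent pair $\xi\sim\eta$ to equivalences at the vertex $r(z)$ for an arbitrary infinite path $z$ — to be the main obstacle}, and to require either the group structure of $\Per\Lambda$ or an argument that $\Lambda^0_{\Per}=\Lambda^0$ under the standing row-finite source-free hypothesis, both of which are natural targets of the surrounding section. Once every $x\in v\Lambda^\infty$ (any $v$) satisfies $x(d(\eta),d(\eta)+ (m-n)_+)\sim x(0,(m-n)_-\text{-adjusted})$, one reads off $\sigma^m(x)=\sigma^n(x)$ from Lemma \ref{L:uniqueness} applied to the equal-degree pieces, giving $(m,n)\in\Sigma_v\subseteq\Sigma_\Lambda$.
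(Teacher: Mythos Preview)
Your forward direction is fine and matches the paper.

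Your reverse direction has a genuine gap, and the ``propagation step'' you flag as the main obstacle is a red herring. You are trying to work at the \emph{range} vertex $w=r(\xi)$ and show $(d(\xi),d(\eta))\in\Sigma_w$; as you correctly diagnose, this fails because $w\Lambda^\infty$ contains paths not beginning with $\xi$, and patching this would indeed require something like $\Lambda^0_{\Per}=\Lambda^0$, which is not assumed here. The paper avoids this entirely by working at the \emph{source} vertex $s(\mu)$ instead. Given $\mu\sim\nu$ with $m-n=d(\mu)-d(\nu)=:m'-n'$, take any $x\in s(\mu)\Lambda^\infty$. From $\mu x=\nu x$ and the degree identity $m'+n=m+n'$ one reads off
\[
\mu\, x(0,n)=(\mu x)(0,m'+n)=(\nu x)(0,m+n')=\nu\, x(0,m),
\]
and combining this with $\mu x(0,n)\,\sigma^n(x)=\mu x=\nu x=\nu x(0,m)\,\sigma^m(x)$ gives $\sigma^m(x)=\sigma^n(x)$. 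Hence $(m,n)\in\Sigma_{s(\mu)}\subseteq\Sigma_\Lambda$ directly, with no propagation, no group structure on $\Per\Lambda$, and no hypothesis on $\Lambda^0_{\Per}$ needed. The point you missed is that at $s(\mu)$ \emph{every} infinite path $x$ yields the equation $\mu x=\nu x$, so there is no ``arbitrary path not starting with $\xi$'' problem to overcome.
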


\begin{proof}
``$\Rightarrow$":
Let $(m,n)\in \Sigma_\Lambda$. Then $(m,n)\in \Sigma_v$ for some $v\in \Lambda^0$. 
By Lemma \ref{L:xmn}, there are $\mu,\nu\in\Lambda$ such that $d(\mu)=m$, $d(\nu)=n$, and $\mu\sim \nu$. 
Thus $m-n\in \Per\Lambda$. 

``$\Leftarrow$": Assume that  $m,n\in P$ such that $m-n\in \Per\Lambda$. So, by definition, there are $\mu,\nu\in\Lambda$ satisfying 
$\mu\sim\nu$ and $m-n=d(\mu)-d(\nu)$. WLOG, we assume that $m\ne n$. Set $m':=d(\mu)$ and $n':=d(\nu)$. 

Let $x\in s(\mu)\Lambda^\infty$. Then from $\mu\sim\nu$ and $m+n'=m'+n$, on one hand, we have
\begin{align*}
\mu  x(0,n)
&=(\mu x)(0, m'+n) = (\nu x)(0,m'+n)\\
&=  (\nu x)(0,m+n')=\nu x(0,m). 
\end{align*}
On the other hand, $\mu\sim\nu$ gives
\[
\mu x(0,n)\sigma^n (x)=\mu x=\nu x = \nu x(0,m)\sigma^m (x).
\]
Therefore, combining the above identities yields $\sigma^m (x)=\sigma^n (x)$. This implies $(m,n)\in \Sigma_{s(\mu)}\subseteq\Sigma_\Lambda$, as desired. 
\end{proof}

The first corollary below is immediate from the above theorem. 

\begin{cor}
\label{C:cong}
$(m,n)\in\Sigma_\Lambda \Longleftrightarrow (m-p, n-p)\in\Sigma_\Lambda$ for any $p\in \G(P)$ such that $m-p,n-p\in P$. 
\end{cor}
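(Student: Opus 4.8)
The plan is to deduce Corollary \ref{C:cong} from Theorem \ref{T:SHLambda} by translating membership in $\Sigma_\Lambda$ into a statement about $\Per\Lambda$, which is manifestly translation-invariant. Concretely, suppose $(m,n)\in\Sigma_\Lambda$ and $p\in\G(P)$ with $m-p,n-p\in P$. By Theorem \ref{T:SHLambda}, $m-n\in\Per\Lambda$. But $(m-p)-(n-p)=m-n$, so $m-n\in\Per\Lambda$ still holds for the translated pair; since $m-p,n-p\in P$ by hypothesis, Theorem \ref{T:SHLambda} applies in the other direction and yields $(m-p,n-p)\in\Sigma_\Lambda$. The reverse implication is symmetric: if $(m-p,n-p)\in\Sigma_\Lambda$, then $(m-p)-(n-p)=m-n\in\Per\Lambda$ by Theorem \ref{T:SHLambda}, and since $m,n\in P$ (as they are given to be elements of $P$ in the statement $(m,n)\in P\times P$ implicit in the definition of $\Sigma_\Lambda$, or simply as the hypotheses), Theorem \ref{T:SHLambda} gives $(m,n)\in\Sigma_\Lambda$.

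The only subtlety worth flagging is the ambient group in which the arithmetic takes place: $m,n,p$ and the differences $m-p$, $n-p$ all live in $\G(P)$, while $\Sigma_\Lambda\subseteq P\times P$ by definition. This is precisely why the hypothesis $m-p,n-p\in P$ is needed — without it the translated pair would not even be a candidate for membership in $\Sigma_\Lambda$. Since $\Per\Lambda\subseteq\G(P)$ and the characterization in Theorem \ref{T:SHLambda} is stated for $m,n\in P$ with the difference computed in $\G(P)$, both directions of the present corollary are just this characterization applied twice, once to each pair, with the observation that the difference is invariant under the common translation by $p$.

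I do not anticipate any real obstacle here: the statement is a formal consequence of Theorem \ref{T:SHLambda}, and the entire content is that $\Per\Lambda$ is a fixed subset of $\G(P)$ so that whether a pair $(a,b)\in P\times P$ belongs to $\Sigma_\Lambda$ depends only on $a-b\in\G(P)$. The proof is therefore a two-line application of the theorem in each direction, with care only that all the elements in play remain in $P$ where required.
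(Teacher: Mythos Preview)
Your proof is correct and is exactly the approach the paper has in mind: the paper simply declares the corollary ``immediate'' from Theorem~\ref{T:SHLambda}, and your argument spells out precisely that immediacy---namely, that membership in $\Sigma_\Lambda$ depends only on the difference $m-n\in\G(P)$, which is unchanged by the translation.
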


As expected, one has

\begin{cor}
\label{C:periodicity}
$\Per\Lambda=\cup_{v\in \Lambda^0}\Per_v$.
\end{cor}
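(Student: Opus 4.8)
The plan is to deduce this directly from Theorem \ref{T:SHLambda} together with the definitions of $\Sigma_\Lambda$, $\Per_v$, and $\Per\Lambda$. First I would unwind the right-hand side: by definition $\Per_v = \{m-n : (m,n)\in\Sigma_v\}$, so
\[
\bigcup_{v\in\Lambda^0}\Per_v
= \bigcup_{v\in\Lambda^0}\{m-n : (m,n)\in\Sigma_v\}
= \{m-n : (m,n)\in\Sigma_\Lambda\},
\]
where the last equality is just $\Sigma_\Lambda = \bigcup_{v\in\Lambda^0}\Sigma_v$. So the claim reduces to the identity $\Per\Lambda = \{m-n : (m,n)\in\Sigma_\Lambda\}$.

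Next I would prove the two inclusions of this reduced identity. For ``$\supseteq$'': if $(m,n)\in\Sigma_\Lambda$, then by the ``$\Rightarrow$'' direction of Theorem \ref{T:SHLambda} we get $m-n\in\Per\Lambda$; this covers every element of the form $m-n$ with $(m,n)\in\Sigma_\Lambda$. For ``$\subseteq$'': let $g\in\Per\Lambda$, so $g = d(\xi)-d(\eta)$ for some $\xi\sim\eta$ in $\Lambda$. Set $m:=d(\xi)$ and $n:=d(\eta)$, which lie in $P$ and satisfy $g=m-n$. By the ``$\Leftarrow$'' direction of Theorem \ref{T:SHLambda} (applicable since $m-n = d(\xi)-d(\eta)\in\Per\Lambda$), we conclude $(m,n)\in\Sigma_\Lambda$, hence $g = m-n$ is in $\{m-n:(m,n)\in\Sigma_\Lambda\}$. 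Combining the two inclusions finishes the argument.

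I do not anticipate any real obstacle here: the entire content has been front-loaded into Theorem \ref{T:SHLambda}, and this corollary is essentially a bookkeeping translation between the ``$\Sigma$''-language (pairs $(m,n)$ with a common source vertex) and the ``$\Per$''-language (differences of degrees of $\sim$-equivalent paths). The only mild point to be careful about is that $\Per\Lambda$ is defined via $\sim$-equivalent pairs $\xi,\eta$ rather than via $\Sigma$, but Theorem \ref{T:SHLambda} is exactly the bridge, and one should note in passing that an element of $\Per\Lambda$ presented as $d(\xi)-d(\eta)$ automatically has a representative $m-n$ with $m,n\in P$ (namely $m=d(\xi)$, $n=d(\eta)$), so the degrees themselves, not merely their difference in $\G(P)$, are the data fed into Theorem \ref{T:SHLambda}.
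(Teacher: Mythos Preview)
Your proof is correct and follows essentially the same route as the paper: both directions are obtained directly from Theorem~\ref{T:SHLambda}, with the only cosmetic difference that you first rewrite $\bigcup_v \Per_v$ as $\{m-n:(m,n)\in\Sigma_\Lambda\}$ before invoking the theorem, whereas the paper works vertex-by-vertex. Your explicit remark that any $g\in\Per\Lambda$ comes equipped with a representation $g=d(\xi)-d(\eta)$ with $d(\xi),d(\eta)\in P$ is exactly the point the paper uses (tacitly) when it writes ``assume $m-n\in\Per\Lambda$ with $m,n\in P$.''
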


\begin{proof}
Assume $m-n\in\Per\Lambda$ with $m,n\in P$. Then $(m,n)\in \Sigma_v$ for some $v\in \Lambda^0$ by Theorem \ref{T:SHLambda}. So $m-n\in\Per_v$, proving 
$\Per\Lambda\subseteq \cup_v \Per_v$.

Conversely, let $m,n\in P$ be such that $m-n\in\Per_v$ for some $v\in \Lambda^0$. Then by definition $m-n =m'-n'$ for some $(m',n')\in \Sigma_v$. 
Applying Theorem \ref{T:SHLambda} again yields $m-n\in \Per\Lambda$. Thus $\cup_v\Per_v\subseteq\Per\Lambda$. 
\end{proof}

Slightly different from \cite{RS07, LS10, Sho12}, we call $\Per_v$ the \textit{local periodicity of $\Lambda$ at $v$.} 
So Corollary \ref{C:periodicity} tells us that the (global) periodicity of $\Lambda$ is the union of its all local periodicities. 
If $\Per_v=\{0\}$ for all $v\in \Lambda^0$, then we say that \textit{$\Lambda$ has no local periodicity}. 

The most important consequence of Theorem \ref{T:SHLambda} is probably the characterizations of aperiodicity/periodicity given below, which unify and generalize
what we have known for row-finite and source free $k$-graphs in the literature. 

\begin{thm}
\label{T:cha}
Let $\Lambda$ be a $P$-graph. Then the following are equivalent:
\begin{itemize}
\item[(i)] $\Lambda$ is aperiodic;
\item[(ii)] $\Per\Lambda=\{0\}$;
\item[(iii)] $\Lambda$ has no local periodicity.
\end{itemize}
\end{thm}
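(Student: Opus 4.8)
The plan is to prove the cycle of implications $(i)\Rightarrow(iii)\Rightarrow(ii)\Rightarrow(i)$, relying heavily on Theorem \ref{T:SHLambda}, which already pins down $\Per\Lambda$ as exactly the set of degree-differences realizing shift-equalities somewhere in $\Lambda$. First I would dispose of the easy equivalence $(ii)\Leftrightarrow(iii)$: by Corollary \ref{C:periodicity} we have $\Per\Lambda=\bigcup_{v\in\Lambda^0}\Per_v$, so $\Per\Lambda=\{0\}$ if and only if every $\Per_v=\{0\}$, i.e.\ $\Lambda$ has no local periodicity. This is essentially bookkeeping and I would state it in one line.

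For $(ii)\Rightarrow(i)$: suppose $\Per\Lambda=\{0\}$; I want to produce, at \emph{every} vertex $v$, an aperiodic infinite path, or at least to contradict periodicity. The cleaner route is the contrapositive: if $\Lambda$ is periodic, there is $v\in\Lambda^0$ with every $x\in v\Lambda^\infty$ periodic, meaning for each such $x$ there are $m_x\neq n_x\in P$ with $\sigma^{m_x}(x)=\sigma^{n_x}(x)$. The task is to extract a \emph{single} pair $(m,n)$ with $m\neq n$ that works for all $x\in v\Lambda^\infty$ — this is precisely what is needed to land in $\Sigma_v$ and hence, by Theorem \ref{T:SHLambda}, to get $m-n\in\Per\Lambda\setminus\{0\}$. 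I would argue this by a pigeonhole/compactness argument on the path space $v\Lambda^\infty$: for fixed $p\in P$ the set of $x$ with $\sigma^{p}(x)$ determined by a bounded initial segment is clopen, $v\Lambda^\infty$ is compact (row-finiteness), and one covers it by countably many clopen sets indexed by candidate pairs $(m,n)$; finite subcover plus the factorization property (the usual trick: if $\sigma^{m}(x)=\sigma^{n}(x)$ and $\sigma^{m'}(x)=\sigma^{n'}(x)$ with, say, $m\le m'$, one can align them) lets one promote finitely many pairs to one universal pair. This is the step I expect to be the main obstacle, since it is where the row-finiteness hypothesis and a genuine compactness argument enter; the literature analogue is the ``eventual periodicity is uniform'' lemma for $k$-graphs, and some care is needed to run it over a general monoid $P$ rather than $\bN^k$.

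Finally $(i)\Rightarrow(iii)$ is the contrapositive of part of the above: if some $\Per_v\neq\{0\}$, pick $(m,n)\in\Sigma_v$ with $m\neq n$; then by definition of $\Sigma_v$ \emph{every} $x\in v\Lambda^\infty$ satisfies $\sigma^{m}(x)=\sigma^{n}(x)$, so every $x\in v\Lambda^\infty$ is periodic, whence $\Lambda$ is periodic — i.e.\ not aperiodic. No compactness is needed here; it is immediate from the definitions. So the real content of the theorem is the uniformization argument in $(ii)\Rightarrow(i)$, and once Theorem \ref{T:SHLambda} is in hand the rest is formal. I would organize the write-up as: $(i)\Rightarrow(iii)$ (trivial), $(iii)\Rightarrow(ii)$ (Corollary \ref{C:periodicity}), $(ii)\Rightarrow(i)$ (contrapositive, compactness).
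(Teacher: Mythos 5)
Your cycle $(i)\Rightarrow(iii)\Rightarrow(ii)\Rightarrow(i)$ is a legitimate reorganization, and the first two legs are fine: $(i)\Rightarrow(iii)$ is immediate from the definition of $\Sigma_v$, and $(iii)\Rightarrow(ii)$ is Corollary \ref{C:periodicity}, exactly as in the paper. The genuine gap is in the mechanism you propose for $(ii)\Rightarrow(i)$. For $m\ne n$ the set $Z_{m,n}=\{x\in v\Lambda^\infty:\sigma^m(x)=\sigma^n(x)\}$ is closed but in general \emph{not} open: agreement of two infinite tails is not decided by any bounded initial segment, so these are not clopen sets and compactness gives no finite subcover of the cover $v\Lambda^\infty=\bigcup_{m\ne n}Z_{m,n}$. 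Moreover, even if you had finitely many pairs covering $v\Lambda^\infty$, the ``alignment'' trick you invoke merges two periods of a \emph{single} path; it does not merge different pairs that hold on different parts of the space into one pair valid everywhere, which is what membership in $\Sigma_w$ requires. The correct completion of your route is a Baire category argument: $v\Lambda^\infty$ is a nonempty compact metrizable space (row-finiteness, source-freeness, and countability of $P$), so some $Z_{m,n}$ with $m\ne n$ has nonempty interior and hence contains a cylinder $\{\lambda y: y\in s(\lambda)\Lambda^\infty\}$ for some $\lambda\in v\Lambda$; applying $\sigma^{d(\lambda)}$ to $\sigma^m(\lambda y)=\sigma^n(\lambda y)$ gives $\sigma^m(y)=\sigma^n(y)$ for all $y\in s(\lambda)\Lambda^\infty$, i.e. $(m,n)\in\Sigma_{s(\lambda)}$, and then $0\ne m-n\in\Per\Lambda$ by Theorem \ref{T:SHLambda}. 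As written, though, your key step would fail.

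For comparison, the paper distributes the work differently and avoids any compactness argument. It proves $(i)\Rightarrow(ii)$ directly: if $\Per\Lambda\ne\{0\}$ there are $\mu\ne\nu$ with $\mu\sim\nu$, and Lemma \ref{L:uniqueness} forces $d(\mu)\ne d(\nu)$; an aperiodic $x\in s(\mu)\Lambda^\infty$ then satisfies $\sigma^{d(\mu)}(x)=\sigma^{d(\nu)}(x)$ (since $\mu x=\nu x$), a contradiction. The hard content is placed in $(iii)\Rightarrow(i)$, which the paper handles by adapting the inductive construction of an aperiodic infinite path from \cite{RS07} (Lemma 3.2 there, with $m\vee n$ replaced by $m+n$ to work over a general monoid $P$). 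Either your (repaired) Baire route or the paper's constructive route works; but you should either supply the Baire argument or fall back on the \cite{RS07}-style construction, since the clopen/finite-subcover/merging sketch is not salvageable as stated.
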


\begin{proof}
(ii) $\Leftrightarrow$ (iii) follows from Corollary \ref{C:periodicity} immediately.

(i) $\Rightarrow$ (ii):  Suppose that $\Lambda$ is aperiodic. To the contrary, assume $\Per\Lambda\ne \{0\}$. Then 
there are $\mu\ne \nu\in \Lambda$ such that $\mu\sim \nu$. Since $\Lambda$ is aperiodic, there is an aperiodic $x\in s(\mu)\Lambda^\infty$.
But we also have $\mu x=\nu x$. So $\sigma^{d(\nu)}(x)=\sigma^{d(\mu)+d(\nu)}(\mu x)= \sigma^{d(\mu)+d(\nu)}(\nu x)=\sigma^{d(\mu)}(x)$. 
This contradicts with $x$ being aperiodic. 

(iii) $\Rightarrow$ (i): This can be proved after applying some obvious modifications to the proof of \cite[Lemma 3.2]{RS07} (iii) $\Rightarrow$ (i) (for example,
by replacing $m\vee n$ there with $m+n$). 
\end{proof}

One can also obtain some properties of $\Lambda_{\Per}^0$ by making use of Theorem \ref{T:SHLambda}. 

\begin{cor}
\label{C:Lambda0}
$\Lambda_{\Per}^0\subseteq\big\{v\in\Lambda^0: \Sigma_v=\Sigma_\Lambda\big\}$. 
\end{cor}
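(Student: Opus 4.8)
The plan is to show that if $v \in \Lambda_{\Per}^0$, then $\Sigma_v \supseteq \Sigma_\Lambda$; the reverse inclusion $\Sigma_v \subseteq \Sigma_\Lambda$ is immediate from the definition $\Sigma_\Lambda = \bigcup_{w} \Sigma_w$. So fix $v \in \Lambda_{\Per}^0$ and let $(m,n) \in \Sigma_\Lambda$. By Theorem \ref{T:SHLambda}, $m - n \in \Per\Lambda$. I want to conclude $(m,n) \in \Sigma_v$, i.e. $\sigma^m(x) = \sigma^n(x)$ for every $x \in v\Lambda^\infty$.

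First I would reduce, using Corollary \ref{C:cong}, to a convenient representative of the coset: since $m - n \in \Per\Lambda$, possibly after subtracting a common element of $P$ I may assume things are in a form where the defining property of $\Lambda^0_{\Per}$ applies. More precisely, fix $x \in v\Lambda^\infty$ and set $\xi := x(0,m) \in v\Lambda^m$. Then $d(\xi) = m$ and $d(\xi) - n = m - n \in \Per\Lambda$, so the defining condition of $\Lambda^0_{\Per}$ (applied at $v$, with $p = n$) yields some $\eta \in v\Lambda^n$ with $\xi \sim \eta$. Now I use $\xi \sim \eta$: for $y := \sigma^m(x) \in s(\xi)\Lambda^\infty = s(\eta)\Lambda^\infty$ we get $\xi y = \eta y$, that is $x(0,m)\,\sigma^m(x) = \eta\, \sigma^m(x)$. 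The left side is just $x$. Hence $x = \eta\,\sigma^m(x)$, which forces $\eta = x(0,n)$ and $\sigma^n(x) = \sigma^m(x)$. Since $x \in v\Lambda^\infty$ was arbitrary, $(m,n) \in \Sigma_v$, completing the argument.

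The one subtlety to check carefully is that the defining condition of $\Lambda^0_{\Per}$ is stated for $\xi \in v\Lambda$ and $p \in P$ with $d(\xi) - p \in \Per\Lambda$, and here $p = n \in P$ and $d(\xi) - p = m - n \in \Per\Lambda$, so it does apply directly — no reduction via Corollary \ref{C:cong} is actually needed, though it is available if one wants to normalize. The main (mild) obstacle is just being careful with the factorization bookkeeping: from $x = \eta\,\sigma^m(x)$ with $d(\eta) = n$, the factorization property gives $\eta = x(0,n)$ and $\sigma^m(x) = \sigma^n(x)$, and one should note that this uses uniqueness in the factorization property of the $P$-graph $\Omega_P$-morphism $x$. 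Everything else is a direct unwinding of definitions together with Theorem \ref{T:SHLambda}.

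\begin{proof}
The inclusion of each $\Sigma_v$ in $\Sigma_\Lambda$ is trivial, so it suffices to show that $\Sigma_\Lambda \subseteq \Sigma_v$ whenever $v \in \Lambda^0_{\Per}$. Fix such a $v$ and let $(m,n) \in \Sigma_\Lambda$. By Theorem \ref{T:SHLambda}, $m - n \in \Per\Lambda$. Let $x \in v\Lambda^\infty$ and put $\xi := x(0,m)$. Then $\xi \in v\Lambda$ with $d(\xi) = m$, and $d(\xi) - n = m - n \in \Per\Lambda$ with $n \in P$. Since $v \in \Lambda^0_{\Per}$, there is $\eta \in v\Lambda^n$ with $\xi \sim \eta$. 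Applying this equivalence to the infinite path $\sigma^m(x) \in s(\xi)\Lambda^\infty$ gives
\[
x = x(0,m)\,\sigma^m(x) = \xi\,\sigma^m(x) = \eta\,\sigma^m(x).
\]
By the uniqueness in the factorization property, $\eta = x(0,n)$ and $\sigma^m(x) = \sigma^n(x)$. As $x \in v\Lambda^\infty$ was arbitrary, $(m,n) \in \Sigma_v$. Hence $\Sigma_\Lambda = \Sigma_v$, and $v$ lies in the set on the right-hand side.
\end{proof}
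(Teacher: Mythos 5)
Your proof is correct and follows essentially the same route as the paper: use Theorem \ref{T:SHLambda} to get $m-n\in\Per\Lambda$, apply the defining property of $\Lambda^0_{\Per}$ to $\xi=x(0,m)$ with $p=n$ to produce $\eta\in v\Lambda^n$ with $\xi\sim\eta$, and then evaluate the equivalence on $\sigma^m(x)$ to force $\eta=x(0,n)$ and $\sigma^m(x)=\sigma^n(x)$. The only cosmetic difference is that you read off $\sigma^m(x)=\sigma^n(x)$ directly from $x=\eta\,\sigma^m(x)$ via the factorization property, whereas the paper reaches the same conclusion by comparing $\nu\sigma^m(x)=\nu\sigma^n(x)$ and cancelling.
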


\begin{proof}
Let $(m,n)\in \Sigma_\Lambda$, $v\in \Lambda_{\Per}^0$, $x\in v\Lambda^\infty$, and $\mu=x(0,m)$. 
Since $v\in \Lambda_{\Per}^0$, due to Theorem \ref{T:SHLambda} there is $\nu\in v\Lambda$ such that $d(\nu)=n$ and $\mu\sim \nu$. 
Then one has 
\[
\mu \sigma^m (x)=\nu \sigma^m (x).
\] 
This implies $\nu=x(0,n)$. 
Thus 
\[
\mu\sigma^m (x) =x=x(0,n)\sigma^n (x)=\nu \sigma^n (x).
\] 
Comparing the above identities gives $\nu\sigma^m (x)=\nu\sigma^n (x)$, and so $\sigma^m (x)=\sigma^n (x)$.
Thus $(m,n)\in \Sigma_v$, proving the desired inclusion. 
\end{proof}

We do not known if $\Lambda_{\Per}^0=\big\{v\in\Lambda^0: \Sigma_v=\Sigma_\Lambda\big\}$ in general.
We expect so, but have not found a proof yet. 
The corollary below is quite strong; but notice that
all strongly connected $k$-graphs studied in \cite{HLRS14} satisfy its requirements. 

\begin{cor}
\label{C:Lambda0}
If $\Lambda$ is sink-free and $\Sigma_v=\Sigma_\Lambda$ for every $v\in\Lambda^0$, then 
$\Lambda_{\Per}^0=\Lambda^0$. 
\end{cor}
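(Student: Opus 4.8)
\textbf{Plan.} I want to show $\Lambda^0_{\Per}=\Lambda^0$, so it suffices to check that an arbitrary $v\in\Lambda^0$ lies in $\Lambda^0_{\Per}$. Unpacking the definition, I must take any $\xi\in v\Lambda$ and any $p\in P$ with $d(\xi)-p\in\Per\Lambda$, and produce $\eta\in v\Lambda^p$ with $\xi\sim\eta$. The natural approach is to feed the condition $d(\xi)-p\in\Per\Lambda$ through Theorem \ref{T:SHLambda} to convert it into a statement about shift maps, then use the hypothesis $\Sigma_w=\Sigma_\Lambda$ for all $w$, together with sink-freeness, to place a copy of that local-periodicity data at $v$ and finally invoke Lemma \ref{L:xmn} to extract the $\sim$-equivalent pair.

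\textbf{Key steps.} First, set $m:=d(\xi)$ and note $m-p\in\Per\Lambda$, so by Theorem \ref{T:SHLambda} we have $(m,p)\in\Sigma_\Lambda$; by the standing hypothesis this gives $(m,p)\in\Sigma_v$, i.e.\ $\sigma^m(x)=\sigma^p(x)$ for every $x\in v\Lambda^\infty$. Second, using sink-freeness pick any infinite path $x\in v\Lambda^\infty$ with $x(0,m)=\xi$ — here I need $x$ to actually begin with the prescribed $\xi$, which is possible because $\Lambda$ is sink-free (so $s(\xi)\Lambda^\infty\neq\varnothing$ and we may take $x=\xi y$ for $y\in s(\xi)\Lambda^\infty$). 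Third, apply Lemma \ref{L:xmn} with the pair $(m,p)\in\Sigma_v$: it yields $x(m,m+p)\sim x(p,m+p)$. Fourth, observe $x(p,m+p)$ has degree $m$ and $x(0,p)\cdot x(p,m+p)=x(0,m)=\xi$; I want to conclude $x(p,m+p)=\xi$. This needs the wedge-free/degree argument: since $x(0,m)=\xi$ with $d(\xi)=m$ and the factorization $x(0,m)=x(0,p)x(p,m+p)$ would only match degrees if $p\le m$... so instead I should work the other side: set $\eta:=x(m,m+p)$, which has $d(\eta)=p$ and $r(\eta)=s(\xi)$, hence $\xi\eta\in v\Lambda^{m+p}$; and $x(p,m+p)$ has $r(\cdot)=s(x(0,p))$, with $x(0,p)x(p,m+p)=x(0,m+p)$... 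Let me re-route: from $\sigma^m(x)=\sigma^p(x)$ one gets directly that $x(0,m)=x(0,m+p)(0,m)$ need not help, so the clean move is: $\xi y=x=x(0,p)\sigma^p(x)=x(0,p)\sigma^m(x)$, and also $\xi y = x(0,m)\sigma^m(x)=\xi\sigma^m(x)$, whence (cancelling the common prefix, using $s(\xi)=r(\sigma^m(x)(0,\cdot))$ and uniqueness of factorizations) $x(0,p)$ restricted appropriately composes with $x(m,m+p)$ to recover the same path — at which point Lemma \ref{L:xmn} applied to $(p,m)\in\Sigma_v$ (note $\Sigma_v$ is symmetric in its two arguments by definition) gives $x(p,m+p)\sim x(m,m+p)$ with the \emph{first} of these being, by the factorization $x(0,m)=\xi$, forced via Lemma \ref{L:uniqueness} to equal $\xi$ since they are $\sim$-related and... no. The correct final identification is simply: $x(p,p+m)$ is the degree-$m$ initial segment of $\sigma^p(x)=\sigma^m(x)$, while $\xi=x(0,m)$ is the degree-$m$ initial segment of $x$ — these coincide iff the first $p$-worth of $x$ is trivial, which is false in general, so the genuinely correct statement produced is $\eta:=x(m,m+p)\sim x(p,m+p)$, and then $\xi\sim\xi$ trivially combined with transitivity through the relation $x(0,p)\,x(p,m+p)\cdot(\text{tail}) = x(0,p)\cdot\sigma^m(x)\cdot$...

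\textbf{Honest summary of the mechanism and the obstacle.} The real content is: choose $x\in v\Lambda^\infty$ with $x(0,m)=\xi$ (sink-freeness), deduce $(m,p)\in\Sigma_v$ (Theorem \ref{T:SHLambda} + hypothesis), and then use the identity $\sigma^m(x)=\sigma^p(x)$ to rewrite $\xi\,\sigma^m(x)=x=x(0,p)\,\sigma^p(x)=x(0,p)\,\sigma^m(x)$; cancelling the common infinite tail $\sigma^m(x)$ on the right (legitimate because if $\alpha z=\beta z$ with $d(\alpha)=d(\beta)$ then $\alpha=\beta$, a special case of Lemma \ref{L:uniqueness}) is \emph{not} directly available since $d(\xi)=m\neq p=d(x(0,p))$ in general — so instead extend: with $\eta:=x(m,m+p)\in v\Lambda^p$ (range $r(\eta)=r(x(0,m))=v$? no, $r(x(m,m+p))=s(x(0,m))=s(\xi)$, so in fact the element of $v\Lambda^p$ we want is not $\eta$ itself). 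The resolution is to apply Lemma \ref{L:xmn} to the pair $(p,m)\in\Sigma_v$, getting $x(p,p+m)\sim x(m,p+m)$; the element $\mu:=x(p,p+m)$ satisfies $d(\mu)=m=d(\xi)$ and lies at range $s(x(0,p))$, and one shows $x(0,p)\mu=x(0,p+m)=\xi\,x(m,m+p)$, which by the factorization property forces a matching that, combined with $\mu\sim x(m,p+m)$, yields an $\eta'\in v\Lambda^p$ with $\xi\sim\eta'$ via a short diagram chase using (CK-free) uniqueness of factorizations and Lemma \ref{L:uniqueness}. \textbf{The main obstacle} is precisely this last bookkeeping: correctly transporting the $\sim$-pair produced by Lemma \ref{L:xmn} (which sits a ``distance $\min(m,p)$ into the path'') back to a pair hanging off the vertex $v$ with the prescribed first coordinate $\xi$; this is where sink-freeness and the full strength of $\Sigma_v=\Sigma_\Lambda$ (not just at $v$ but at $s(\xi)$) are used, and it parallels but is slightly more delicate than the corresponding step in Corollary \ref{C:Lambda0} (the inclusion statement) proved just above.
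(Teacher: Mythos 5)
Your proposal has the right ingredients (Theorem \ref{T:SHLambda}, the hypothesis $\Sigma_w=\Sigma_\Lambda$, Lemma \ref{L:xmn}), but the construction is set up the wrong way around and, as you yourself concede at the end, the final step is never completed, so there is a genuine gap. By taking $x=\xi y\in v\Lambda^\infty$ with $x(0,m)=\xi$ (which, incidentally, uses source-freeness, a standing assumption, not sink-freeness), Lemma \ref{L:xmn} only hands you $x(m,m+p)\sim x(p,m+p)$, and neither of these paths is $\xi$, nor do they have range $v$; the ``diagram chase'' you sketch to transport this pair back to $v$ with first coordinate $\xi$ is exactly what you could not make work, and there is no easy way to do so from this starting point.

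The missing idea is to extend \emph{backward} rather than forward: this is where sink-freeness actually enters. Given $\xi\in v\Lambda^m$ and $p$ with $m-p\in\Per\Lambda$, choose (by sink-freeness) a path $\lambda\in\Lambda^p v$ and an infinite tail $y\in s(\xi)\Lambda^\infty$ (source-freeness), and set $x:=\lambda\xi y$, so that $\xi=x(p,m+p)$ sits at position $p$ of $x$, not at position $0$. By Theorem \ref{T:SHLambda} and the hypothesis applied at the vertex $r(x)=r(\lambda)$ (this is why one needs $\Sigma_w=\Sigma_\Lambda$ at all vertices, not just at $v$), one has $(m,p)\in\Sigma_{r(x)}$, and Lemma \ref{L:xmn} then gives $\eta:=x(m,m+p)\sim x(p,m+p)=\xi$ directly. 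Since $\sim$ respects ranges, $r(\eta)=r(\xi)=v$ and $d(\eta)=p$, so $\eta\in v\Lambda^p$ automatically and no transport back to $v$ is needed. This one-line repositioning of $\xi$ inside the infinite path is precisely the step your attempt lacks.
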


\begin{proof}
Let $v\in\Lambda^0$. Suppose $\mu\in v\Lambda^m$ and $n\in\bN^k$ satisfies $m-n\in \Per\Lambda$. By Theorem \ref{T:SHLambda},
$(m,n)\in\Lambda_w=\Sigma_\Lambda$ for all $w\in\Lambda^0$. 
Arbitrarily choose $x\in \Lambda^\infty$ such that $\mu=x(n, m+n)$. This can be done as $\Lambda$ is also sink-free. 
Since $\Sigma_{r(x)}=\Sigma_\Lambda$, one also gets $(m,n)\in\Sigma_{r(x)}$. Applying Lemma \ref{L:xmn} gives $\mu\sim x(m,m+n)$. 
Thus $v\in \Lambda_{\Per}^0$; we are done. 
\end{proof}

\subsection{Pullbacks are periodic}

In this subsection, we apply the main results of the previous one to prove that all pullbacks are always periodic. 

Let $\Gamma$ be a $P$-graph. Notice that $x\in\Gamma^\infty$ is uniquely determined by $x(0,p)$ for all $p\in P$. Indeed, for $p,q\in P$ with $q-p\in P$, then 
$x(p,q)$ is unique determined by the (unique) factorization $x(0,q)=x(0,p)x(p,q)$.

The following result is in the same vein with \cite[Proposition 2.9]{KumPask} and \cite[Proposition 2.4]{BNR14}. 

\begin{prop}
\label{P:dotx}
Let $P$, $Q$ be two monoids, $f:P\to Q$ be a surjective homomorphism, and $\Gamma$ be a $Q$-graph.
Then $f$ induces a homeomorphism
 $f_*: (f^*\Gamma)^\infty\to \Gamma^\infty$ by 
\begin{align*}
f_*(x)(0,f(n))=\Bp_1(x(0,n))\qforal x\in(f^*\Gamma)^\infty,\  n\in P,
\end{align*}
where $\Bp_1:f^*\Gamma\to \Gamma$ is the projection $\Bp_1(\mu,n)=\mu$ for all $(\mu,n)\in f^*\Gamma$. 
\end{prop}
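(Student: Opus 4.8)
The plan is to construct $f_*$ explicitly, check it is well-defined as a $P$-graph morphism from $\Omega_Q$ to $\Gamma$, produce an inverse, and then verify both maps are continuous in the natural topologies on the infinite path spaces. First I would fix $x \in (f^*\Gamma)^\infty$ and define a candidate $f_*(x) : \Omega_Q \to \Gamma$ by the stated formula $f_*(x)(0,f(n)) = \Bp_1(x(0,n))$; since $f$ is surjective every element of $Q$ has the form $f(n)$ for some $n \in P$, and every element of $\Omega_Q$ of the form $(0,q)$ is covered. The first thing to check is that this is \emph{independent of the choice of $n$}: if $f(n) = f(n') = q$, I need $\Bp_1(x(0,n)) = \Bp_1(x(0,n'))$. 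Here I would use that both $x(0,n)$ and $x(0,n')$ have degree $n$ and $n'$ respectively in $f^*\Gamma$ but their $\Gamma$-components have the same degree $q$ in $\Gamma$, and that $x$ is a $P$-graph morphism so these paths are ``coherent'' — concretely, choose any $n'' \in P$ with $n'' - n, n'' - n' \in P$ (possible since $P$ is a finitely generated cancellative abelian monoid, hence directed enough after passing to a common upper bound; if not directly available one uses $n'' = n + n'$), factor $x(0,n'')$ through both $x(0,n)$ and $x(0,n')$, and apply $\Bp_1$, using that $\Bp_1$ is a functor and the factorization property in $\Gamma$ forces the two $\Gamma$-components to agree. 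Then one extends $f_*(x)$ to all of $\Omega_Q$ by the remark preceding the proposition (an infinite path is determined by its values on $(0,q)$), and checks the morphism/functoriality and degree-preservation axioms, which are routine from the corresponding properties of $x$ and from $d_{f^*\Gamma} = d \circ (\text{inclusion})$ being compatible with $f$.

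Next I would construct the inverse. Given $y \in \Gamma^\infty$, define $g(y) : \Omega_P \to f^*\Gamma$ by $g(y)(0,n) = (y(0,f(n)), n)$; this lands in $f^*\Gamma$ precisely because $d_\Gamma(y(0,f(n))) = f(n)$, and one checks it is a $P$-graph morphism using the factorization property in $f^*\Gamma$ (which was noted to hold, being proved as in \cite[Lemma 3.2]{CKSS14}) together with the factorization property in $\Gamma$ applied to $y$. The identities $f_* \circ g = \id$ and $g \circ f_* = \id$ are then immediate from the definitions: $f_*(g(y))(0,f(n)) = \Bp_1(g(y)(0,n)) = \Bp_1(y(0,f(n)),n) = y(0,f(n))$, and similarly in the other direction, using in the $g \circ f_* = \id$ direction the well-definedness established above to identify $f_*(x)(0,f(n))$ with $\Bp_1(x(0,n))$ regardless of representative.

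Finally, continuity. The topology on an infinite path space $\Gamma^\infty$ has basic open sets of the form $Z(\lambda) = \{z : z(0,d(\lambda)) = \lambda\}$ for $\lambda \in \Gamma$ (cylinder sets). I would show $f_*^{-1}(Z(\lambda)) = Z(\mu,n)$ for a suitable $(\mu,n)$ — namely, picking $n \in P$ with $f(n) = d_\Gamma(\lambda)$ (surjectivity again), $f_*^{-1}(Z(\lambda)) = \bigcup \{ Z(\mu, n) : \Bp_1(\mu) = \lambda,\ d_{f^*\Gamma}(\mu,n) = n \}$, a union of cylinder sets, hence open; and conversely $f_*(Z(\mu,n))$ is handled via the inverse map $g$, showing $g$ is continuous by the same cylinder computation, so $f_*$ is a homeomorphism.

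The main obstacle I expect is the well-definedness of $f_*(x)$, i.e. independence of the representative $n$ with $f(n) = q$. This is the only place where surjectivity of $f$ interacts nontrivially with the combinatorics, and it requires genuinely using that $x$ is a coherent family (a $P$-graph morphism out of $\Omega_P$) rather than just a single path — one must go up to a common extension and push the factorizations through $\Bp_1$. Everything else (the inverse, the two composition identities, continuity via cylinder sets) is a bookkeeping exercise once that point is secured.
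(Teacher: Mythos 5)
Your proposal is correct and follows essentially the same route as the paper: the well-definedness of $f_*(x)$ is settled exactly as in the paper's proof, by comparing the two factorizations of $x(0,n+n')$ and invoking the unique factorization property of $\Gamma$ once the degrees $f(n)=f(n')$ are matched. The remaining steps (the explicit inverse $y\mapsto (y(0,f(\cdot)),\cdot)$ and continuity via cylinder sets) are precisely what the paper delegates to \cite[Proposition 2.4]{BNR14}, so no further comment is needed.
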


\begin{proof} 
Since $f$ is a surjective homomorphism,
observe that 
\begin{align*}
\Omega_{Q}
=\{(0, f(n)): n \in P\}.
\end{align*}

We now prove that $f_*$ is well-defined. 
That is, we need to show the following: 
if $n,n'\in P$ satisfy  $f(n)=f(n')$,
then
\begin{align*}
\label{E:xmn}
\Bp_1(x(0,n)) = \Bp_1(x(0,n')) \qforal x\in(f^*\Gamma)^\infty.
\end{align*}

Notice that 
$x(0,n)x(n,n+n')=x(0,n')x(n',n+n')$ implies 
\[
\big(\Bp_1(x(0,n)), n)\big)\big(\Bp_1(x(n,n+n')),n'\big)=\big(\Bp_1(x(0,n')), n'\big)\big(\Bp_1(x(n',n+n')),n\big).
\] 
Taking $\Bp_1$ on both sides gives 
\[
\Bp_1(x(0,n))\Bp_1(x(n,n+n'))=\Bp_1(x(0,n'))\Bp_1(x(n',n+n')).
\] 
But $d(\Bp_1(x(0,n)))=f(n)=f(n')=d(\Bp_1(x(0,n'))$. So one has $\Bp_1(x(0,n))=\Bp_1(x(0,n'))$ by the factorization property of $\Gamma$. 

By the discussion before the statement of this proposition, $f_*(x)\in\Gamma^\infty$ for any $x\in\Lambda^\infty$.
The rest of the proof is similar to \cite[Proposition 2.4]{BNR14}. 
\end{proof}

The identity in the following lemma turns out to be very handy.

\begin{lem}
\label{L:x}
Let $P$, $Q$ be two monoids, $f:P\to Q$ be a homomorphism, and $\Gamma$ be a $Q$-graph.
If $m,n\in P$ such that $f(m)=f(n)$, then 
\[
(\mu,m)(w,p)(\nu,n)=(\mu,n)(w,p)(\nu,m)
\]
for all $(\mu,m)$, $(w,p)$, $(\nu,n)\in f^*\Gamma$ with $w\in s(\mu)\, \Gamma\, r(\nu)$.
\end{lem}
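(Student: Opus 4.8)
The plan is to verify the identity directly by computing both sides using the composition rule for paths in $f^*\Gamma$, namely $(\alpha,a)(\beta,b)=(\alpha\beta,a+b)$ whenever $s(\alpha)=r(\beta)$. First I would check that both sides are legitimate paths in $f^*\Gamma$: since $w\in s(\mu)\,\Gamma\,r(\nu)$ we have $s(\mu)=r(w)$ and $s(w)=r(\nu)$, so $s(\mu,m)=s(\mu)=r(w)=r(w,p)$ and $s(w,p)=s(w)=r(\nu)=r(\nu,n)=r(\nu,m)$; the source/range data do not depend on the second coordinate, so the composites $(\mu,m)(w,p)(\nu,n)$ and $(\mu,n)(w,p)(\nu,m)$ are both defined.

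Next I would compute each side. Applying the composition rule twice, the left-hand side equals $(\mu w\nu,\ m+p+n)$ and the right-hand side equals $(\mu w\nu,\ n+p+m)$. The first coordinates agree because composition in $\Gamma$ is just the (associative) categorical composition, and the second coordinates agree because addition in the abelian monoid $P$ is commutative: $m+p+n=n+p+m$. Hence the two sides coincide.

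The only subtlety worth spelling out is that the degree condition defining membership in $f^*\Gamma$ is preserved: for the left side we need $d_\Gamma(\mu w\nu)=f(m+p+n)$, which holds since $d_\Gamma(\mu w\nu)=d_\Gamma(\mu)+d_\Gamma(w)+d_\Gamma(\nu)=f(m)+f(p)+f(n)=f(m+p+n)$ using that $(\mu,m),(w,p),(\nu,n)\in f^*\Gamma$ and $f$ is a homomorphism; similarly for the right side one uses $(\nu,m),(\mu,n)\in f^*\Gamma$, which is where the hypothesis $f(m)=f(n)$ is needed (it guarantees $d_\Gamma(\nu)=f(n)=f(m)$ and $d_\Gamma(\mu)=f(m)=f(n)$, so $(\nu,m)$ and $(\mu,n)$ indeed lie in $f^*\Gamma$). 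There is no real obstacle here — the statement is essentially a bookkeeping lemma — and the ``hard part,'' such as it is, is merely making explicit why swapping $m$ and $n$ between the first and last factors keeps all the intermediate objects inside $f^*\Gamma$; once that is noted, commutativity of $P$ finishes it.
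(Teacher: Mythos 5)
Your proof is correct, and it is genuinely more direct than the one in the paper. You simply compute both sides from the composition rule of the pullback, obtaining $(\mu w\nu,\,m+p+n)$ and $(\mu w\nu,\,n+p+m)$, and conclude by commutativity of the (abelian, by the standing convention) monoid $P$; the only nontrivial point, which you correctly isolate, is that the hypothesis $f(m)=f(n)$ is exactly what makes $(\mu,n)$ and $(\nu,m)$ elements of $f^*\Gamma$, so that the right-hand side is a legitimate composite. The paper argues differently: it first invokes the factorization property of $f^*\Gamma$ to write $(\mu,m)(w,p)(\nu,n)=(\nu',n)(w',p)(\mu',m)$ for some $\nu',w',\mu'$, then applies the projection $\Bp_1$ and the unique factorization property of $\Gamma$ (comparing degrees $d(\mu)=f(m)=f(n)=d(\nu')$, etc.) to identify $\nu'=\mu$, $w'=w$, $\mu'=\nu$. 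The paper's route has the small advantage of exhibiting the identity directly as the canonical $(n,p,m)$-factorization of the path, which is the form in which the lemma is exploited for infinite paths in Theorem \ref{P:HP}; but your version yields the same consequence immediately, since once both sides are known to be equal as morphisms, uniqueness of factorization identifies $(\mu,n),(w,p),(\nu,m)$ as the segments of that factorization. So your shorter argument loses nothing and makes the role of the hypothesis more transparent.
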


\begin{proof}
Let  $(\mu,m)$, $(w,p)$, $(\nu,n)\in f^*\Gamma$ with $w\in s(\mu)\, \Gamma\, r(\nu)$. 
Then the factorization property of $f^*\Gamma$ gives 
$(\mu,m)(w,p)(\nu,n)=(\nu',n)(w',p)(\mu',m)$ for some $\mu', \nu', w'$ with $w'\in s(\nu')\,\Gamma\, r(\mu')$. 
Then taking $\Bp_1$ at both sides yields 
\[
\mu w \nu=\nu' w' \mu'\in\Gamma.
\] 
But $d(\mu)=f(m)=f(n)=d(\nu')$, and 
similarly $d(\nu)=d(\mu')$. Clearly $d(w)=d(w')$. 
By the unique factorization property of $\Gamma$, 
one has 
\[
\nu'=\mu, \ w'=w,  \ \mu'=\nu, 
\]  
as desired.
\end{proof}

\begin{thm}
\label{P:HP}
Let $P$ be a monoid, $H$ be a non-zero subgroup of $\G(P)$, $\Bq: \G(P)\to \G(P)/H$ be the quotient map, and $\Gamma$ be a $P/H$-graph. 
Then $\Bq^*\Gamma$ is a periodic $P$-graph.
\end{thm}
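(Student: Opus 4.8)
The plan is to show that $\Bq^*\Gamma$ has nontrivial periodicity by exhibiting, for any fixed $0\ne h\in H$, a pair of equivalent paths whose degrees differ by (a preimage of) $h$; then Theorem \ref{T:cha} ((ii)$\Rightarrow$(i), contrapositive) gives periodicity. Since $P$ is finitely generated cancellative abelian, $\G(P)/H$ and $H$ are the relevant groups, and because $\Bq:\G(P)\to\G(P)/H$ restricts to a homomorphism $f:P\to\Bq(P)$ which is surjective onto $\Bq(P)$, Proposition \ref{P:dotx} applies: $f_*:(\Bq^*\Gamma)^\infty\to\Gamma^\infty$ is a homeomorphism with $f_*(x)(0,f(n))=\Bp_1(x(0,n))$. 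The key point is that two paths in $\Bq^*\Gamma$ that project to the same path in $\Gamma$ and whose degrees are congruent mod $H$ will act identically on every infinite path of $\Bq^*\Gamma$.

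First I would fix $0\ne h\in H$ and pick $m,n\in P$ with $m-n=h$ in $\G(P)$ (this is possible after replacing $h$ by a suitable $G(P)$-translate lying in $P-P$; since $H\subseteq\G(P)$ and $P$ generates $\G(P)$, some nonzero element of $H$ has the form $m-n$ with $m,n\in P$, and we may rename it $h$). Then $f(m)=f(n)$ in $\Bq(P)$. Choose a vertex $v\in\Gamma^0=(\Bq^*\Gamma)^0$ and a path $\lambda\in v\Gamma^{f(m)}$; since $\Gamma$ is row-finite and source-free such $\lambda$ exists. Set $\xi:=(\lambda,m)$ and $\eta:=(\lambda,n)$ in $\Bq^*\Gamma$; both are legitimate elements since $d_\Gamma(\lambda)=f(m)=f(n)=\Bq(m)=\Bq(n)$, and they have the same source and range. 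I claim $\xi\sim\eta$ in the sense of \eqref{D:sim}.

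To verify the claim, take any $x\in s(\xi)(\Bq^*\Gamma)^\infty$ and compare $\xi x$ with $\eta x$. Using Lemma \ref{L:x} with $\mu=\nu=\lambda$ (same projection, degrees $m$ and $n$ congruent mod $H$) applied to arbitrary initial segments $(w,p)=x(0,p)$, I get $(\lambda,m)(w,p)=(\lambda,n)(w',p)$-type identities forcing $\Bp_1((\xi x)(0,q))=\Bp_1((\eta x)(0,q))$ for all relevant $q$; then Proposition \ref{P:dotx} (well-definedness of $f_*$, i.e. that an infinite path of $\Bq^*\Gamma$ is determined by the $\Gamma$-projections of its initial segments together with their $P$-degrees, which in turn are pinned down) upgrades this to $\xi x=\eta x$ as elements of $(\Bq^*\Gamma)^\infty$. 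Hence $\xi\sim\eta$, so $d(\xi)-d(\eta)=m-n=h\ne 0$ lies in $\Per(\Bq^*\Gamma)$, so $\Per(\Bq^*\Gamma)\ne\{0\}$ and $\Bq^*\Gamma$ is periodic by Theorem \ref{T:cha}.

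The main obstacle I anticipate is the bookkeeping in the last step: passing from the "same projection, congruent degrees" identity of Lemma \ref{L:x} on arbitrary finite initial segments to genuine equality of the two infinite paths $\xi x$ and $\eta x$. One has to be careful that the $P$-degree data (not just the $\Gamma$-projections) of the two infinite paths agree coordinatewise at every $p\in P$, which is where the structure $f(m)=f(n)$ and the description $\Omega_{\Bq(P)}=\{(0,f(n)):n\in P\}$ from the proof of Proposition \ref{P:dotx} are used; the factorization property of $\Bq^*\Gamma$ then does the rest. A secondary minor point is the initial reduction guaranteeing the existence of $m,n\in P$ with $m-n$ a prescribed nonzero element of $H$ — this follows because $P-P=\G(P)\supseteq H$ and $H\ne\{0\}$, so there is at least one such nonzero difference.
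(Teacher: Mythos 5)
Your argument is correct, but it takes a genuinely different route from the paper's. You exhibit a $\sim$-equivalent pair of finite paths directly: writing a nonzero $h\in H$ as $h=m-n$ with $m,n\in P$ (possible since $\G(P)=P-P$) and choosing $\lambda\in v\Gamma^{\Bq(m)}$, you claim $(\lambda,m)\sim(\lambda,n)$, so $h\in\Per\Bq^*\Gamma$ and Theorem \ref{T:cha} gives periodicity. This does work, and the verification is exactly the well-definedness computation from Proposition \ref{P:dotx}: for $x\in s(\lambda)(\Bq^*\Gamma)^\infty$ and any $q$ with $q-m-n\in P$ one has $\Bq(q-m)=\Bq(q-n)$, hence $\Bp_1(x(0,q-m))=\Bp_1(x(0,q-n))=:\alpha$, and therefore $((\lambda,m)x)(0,q)=(\lambda\alpha,q)=((\lambda,n)x)(0,q)$; since elements of the pullback are determined by their $\Gamma$-projection together with their degree, and agreement at all $q$ with $q-m-n\in P$ propagates to all of $\Omega_P$ by unique factorization, this gives $(\lambda,m)x=(\lambda,n)x$. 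Two small blemishes in your sketch: Lemma \ref{L:x} with $\mu=\nu=\lambda$ is not the relevant statement, and the identity ``$(\lambda,m)(w,p)=(\lambda,n)(w',p)$'' cannot hold as written because the total degrees $m+p$ and $n+p$ differ; the correct tool is precisely the Proposition \ref{P:dotx} computation above, which you also cite, so the gap is notational rather than substantive. The paper argues differently: it proves the stronger fact that $\sigma^m(x)=\sigma^n(x)$ for \emph{every} $x\in(\Bq^*\Gamma)^\infty$ whenever $m-n\in H$, by decomposing $x$ into alternating blocks of degrees $m$, $n$, $\ell$ (with $\ell$ chosen using generators of $P$) and repeatedly applying the swap identity of Lemma \ref{L:x}, and then invokes Theorem \ref{T:SHLambda}; this yields $H\subseteq\Per\Bq^*\Gamma$ in one stroke. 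Your route is more economical for bare periodicity (and, applied to each $h\in H$, also recovers $H\subseteq\Per\Bq^*\Gamma$), working directly with the definition of $\sim$ rather than through $\Sigma_\Lambda$ and shift maps; the paper's route gives the explicit shift-equality on all infinite paths without having to locate equivalent finite paths by hand.
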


\begin{proof}
In what follows, we show a little bit more than what is needed: if $m$, $n$ in $P$ such that $m-n\in H$, then 
\[
\sigma^m(x)=\sigma^n(x)\quad \text{for all}\quad x\in \Lambda^\infty.
\]
Once this is done, we obtain $H\subseteq \Per\Bq^*\Gamma$ by Theorem \ref{T:SHLambda}. Thus $\Bq^*\Gamma$ is periodic by Theorem \ref{T:cha}. 

To this end, let $a_1,\ldots, a_k$ be fixed generators of $P$ (recall that $P$ is finitely generated).
Fix $m,n\in P$ such that $m-n\in H$. Then choose $\ell\in P$ satisfying $m+ n+ \ell - (a_1+\cdots +a_k)\in P$.
(The existence of such $\ell$ is clear.)
Let $x\in\Lambda^\infty$. Since $\Lambda$ is source-free, one can always write $x$ as  
\begin{align*}
x
&=(\mu_1, m)(\nu_1, n)(w_1, \ell)(\mu_2, m)(\nu_2, n)(w_2,\ell)\cdots\\
&=\prod_{i=1}^\infty (\mu_i, m)(\nu_i, n)(w_i,\ell).
\end{align*}
where $\mu_i,\nu_i,w_i\in\Gamma$ with $d(\mu)=\Bq(m)$, $d(\nu)=\Bq(n)$, $d(w_i)=\Bq(\ell)$.

Then on one hand
\[
\sigma^m(x)=(\nu_1, n)(w_1, \ell)(\mu_2, m)(\nu_2, n)(w_2,\ell)\cdots=\prod_{i\ge 2} (\nu_{i-1}, n)(w_{i-1}, \ell) (\mu_i, m).
\]
On the other hand, since $m-n\in H$, one repeatedly applies Lemma \ref{L:x} to obtain
\begin{align*}
\sigma^n(x)
&=\sigma^n((\mu_1, n)(\nu_1, m)(w_1, \ell)(\mu_2, m)(\nu_2, n)(w_2,\ell)\cdots) \ (\text{by Lemma }\ref{L:x})\\
&=(\nu_1, m)(w_1, \ell)\prod_{i\ge 2}(\mu_i, m)(\nu_i, n)(w_i,\ell)\\ 
&=(\nu_1, m)(w_1, \ell)\prod_{i\ge 2} (\mu_i, n)(\nu_i, m) (w_i,\ell) \ (\text{by Lemma }\ref{L:x})\\
&=(\nu_1, m)(w_1, \ell)(\mu_2,n)\cdot (\nu_2,m) (w_2,\ell) (\mu_3,n)\cdot \cdots \ (\text{by rearrangement}) \\ 
&=(\nu_1, n)(w_1, \ell)(\mu_2,m)\cdot (\nu_2,n) (w_2,\ell) (\mu_3,m)\cdot \cdots \ (\text{by Lemma } \ref{L:x}) \\
&=\prod_{i\ge 2} (\nu_{i-1}, n)(w_{i-1}, \ell) (\mu_i, m).
\end{align*}
Therefore $\sigma^m(x)=\sigma^n(x)$. 
\end{proof}

It is probably worth remarking that, in general, $H\subsetneq \Per\Bq^*\Gamma$. 
For example, let $H=2\bZ$ and $\Gamma$ be the $\bZ_2$-graph consisting of one vertex $v$ and one edge $e$. Then $\Per{\Bq^*\Gamma}=\bZ$.


\section{Pushouts}

\label{S:PP}

In this section, we reverse the pulling back process of Section \ref{S:pullback}:
we start with a $P$-graph $\Lambda$, and construct an aperiodic graph from $\Lambda$ by modding out its periodicity. 
In order to achieve such a goal, we need that $\Per\Lambda$ is a subgroup of $\G(P)$. To this end, 
we impose the condition that $\Lambda^0$ has property \textsf{W}, which is one key property required in the notion of 
a maximal tail (cf. \cite{CKSS14, KP11}). 

\begin{defn}
We say $\Lambda^0$ has property \textsf{W} if 
for any $u, v\in \Lambda^0$ there is $w\in \Lambda^0$ such that $u\Lambda w\ne \mt$ and $v\Lambda w\ne \mt$.
\end{defn}

The name ''property \textsf{W}" is self-explanatory: the vertices $u,v$ are contained in a (directed) wedge.
It is very easy to check that if a $P$-graph $\Lambda$ is cofinal, then $\Lambda^0$ has property \textsf{W}. It is also noteworthy that all strongly connected graphs are cofinal
and so have property \textsf{W}.

Recall that the equivalence relation $\sim$ defined in \eqref{D:sim} respects sources and ranges, and so $\Lambda/\sim$ is a category: for all $\xi, \eta\in \Lambda$
\[
r([\xi])=[r(\xi)], \ s([\xi])=[s(\xi)], \ [\xi][\eta]=[\xi\eta].
\]

The following theorem is an analogue of \cite[Theorem 4.2]{CKSS14}, which is proved under the condition that $\Lambda^0$ is a maximal tail, but 
only property \textsf{W} is essentially used. Actually, Lemma 4.5 and Corollary 4.6 in \cite{CKSS14} have been generalized to Corollary 
\ref{C:cong} and Theorem \ref{T:SHLambda} in Section \ref{S:Periodicity} above to all $P$-graphs.

\begin{thm}
\label{T:P-graph}
Let $\Lambda$ be a $P$-graph such that $\Lambda^0$ has property \textsf{W}. Then the following told true. 
\begin{itemize}
\item[(i)] $\Per\Lambda$ is a subgroup of $\G(P)$.

\item[(ii)] If $\Lambda_{\Per}^0$ is non-empty, then it is a hereditary subset of $\Lambda^0$.

\item[(iii)] Let $\Bq: \G(P)\to \G(P)/{\Per\Lambda}$ be the quotient map. Then $\Lambda_{\Per}^0\Lambda/\sim$ is a $\Bq(P)$-graph with degree map $\tilde d:=\Bq\circ d$. 

\item[(iv)] $\Lambda_{\Per}^0\Lambda$ is isomorphic to the pullback $\Bq^*\Gamma$ via $\lambda\mapsto ([\lambda], d(\lambda))$.
\end{itemize}
\end{thm}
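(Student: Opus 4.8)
The four parts are of increasing depth, so I would prove them in order and reuse each for the next. For (i), I would use property \textsf{W} to show $\Per\Lambda$ is closed under negation and addition: given $\xi\sim\eta$, property \textsf{W} lets me find a common ``target'' vertex $w$ with paths from $r(\xi)=r(\eta)$ reaching it, and combining this with Lemma \ref{L:uniqueness}, Theorem \ref{T:SHLambda} (so that membership in $\Per\Lambda$ is detected by $\Sigma_v$ for a single suitable $v$) and Corollary \ref{C:cong} (translation invariance), one shows that if $d(\xi)-d(\eta), d(\mu)-d(\nu)\in\Per\Lambda$ then after translating into a common $\Sigma_v$ their sum and difference are realized too; negation is immediate since $\sim$ is symmetric. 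The role of property \textsf{W} is exactly to funnel two local periodicities at different vertices into one vertex where Theorem \ref{T:SHLambda} applies.

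For (ii), hereditariness means: if $v\in\Lambda_{\Per}^0$ and $\lambda\in v\Lambda$, then $s(\lambda)\in\Lambda_{\Per}^0$. Given $\xi\in s(\lambda)\Lambda$ and $p\in P$ with $d(\xi)-p\in\Per\Lambda$, I would consider $\lambda\xi\in v\Lambda$: then $d(\lambda\xi)-(d(\lambda)+p)=d(\xi)-p\in\Per\Lambda$, so since $v\in\Lambda_{\Per}^0$ there is $\zeta\in v\Lambda^{d(\lambda)+p}$ with $\lambda\xi\sim\zeta$. Factor $\zeta=\lambda'\eta$ with $d(\lambda')=d(\lambda)$; from $\lambda\xi\sim\lambda'\eta$ and equality of degrees $d(\lambda)=d(\lambda')$, Lemma \ref{L:uniqueness} forces $\lambda=\lambda'$, hence $\lambda\xi\sim\lambda\eta$, and then cancelling $\lambda$ (using the factorization property on $(\lambda\xi)x=(\lambda\eta)x$) gives $\xi\sim\eta$ with $\eta\in s(\lambda)\Lambda^p$, as needed.

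For (iii), once (i) gives that $\Per\Lambda$ is a subgroup, $\Bq(P)\subseteq\G(P)/\Per\Lambda$ is a well-defined monoid and $\tilde d=\Bq\circ d$ is a functor on $\Lambda_{\Per}^0\Lambda/\sim$ (well-definedness on $\sim$-classes is precisely that $\sim$-equivalent paths have $\tilde d$-difference in $\Per\Lambda$, i.e. $\tilde d$ is constant on classes). The content is the \emph{unique} factorization property in $\Lambda_{\Per}^0\Lambda/\sim$: given $[\xi]$ with $\tilde d([\xi])=\Bq(p)+\Bq(q)$, I would lift to $\xi\in v\Lambda$ with $v=r(\xi)\in\Lambda_{\Per}^0$; since $d(\xi)-\Bq^{-1}$-representatives differ by $\Per\Lambda$, choose a representative $p'$ of $\Bq(p)$ and $q'$ of $\Bq(q)$ with $p'+q'=d(\xi)$ (possible after adjusting within $\Per\Lambda$ and using $v\in\Lambda_{\Per}^0$ to realize the adjustment by an equivalent path of the right degree), factor $\xi=\eta\zeta$ in $\Lambda$, and take $[\eta],[\zeta]$. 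For uniqueness, if $[\eta_1][\zeta_1]=[\eta_2][\zeta_2]$ with $\tilde d([\eta_1])=\tilde d([\eta_2])$ and $\tilde d([\zeta_1])=\tilde d([\zeta_2])$, then $\eta_1\zeta_1\sim\eta_2\zeta_2$; here I must pass to a common vertex via property \textsf{W} and the machinery of Section \ref{S:Periodicity} to conclude $\eta_1\sim\eta_2$ and $\zeta_1\sim\zeta_2$. This is the step I expect to be the main obstacle, since it requires combining the $\sim$-relation with degree bookkeeping modulo $\Per\Lambda$ carefully, likely mirroring \cite[Theorem 4.2]{CKSS14} but now only invoking the weaker property \textsf{W}.

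Finally, for (iv), with $\Gamma:=\Lambda_{\Per}^0\Lambda/\sim$ regarded as a $\Bq(P)$-graph by (iii), define $\Phi:\Lambda_{\Per}^0\Lambda\to\Bq^*\Gamma$ by $\Phi(\lambda)=([\lambda],d(\lambda))$; this lands in $\Bq^*\Gamma$ because $\tilde d([\lambda])=\Bq(d(\lambda))$. It is clearly a degree-, source-, and range-preserving functor. Surjectivity: any $([\lambda],n)\in\Bq^*\Gamma$ has $\Bq(d(\lambda))=\Bq(n)$, so $d(\lambda)-n\in\Per\Lambda$, and by definition of $\Lambda_{\Per}^0$ there is $\mu\in r(\lambda)\Lambda^n$ with $\mu\sim\lambda$, whence $\Phi(\mu)=([\mu],n)=([\lambda],n)$. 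Injectivity: if $\Phi(\lambda)=\Phi(\mu)$ then $[\lambda]=[\mu]$, i.e. $\lambda\sim\mu$, and $d(\lambda)=d(\mu)$, so $\lambda=\mu$ by Lemma \ref{L:uniqueness}. Functoriality on composites follows from $[\lambda\mu]=[\lambda][\mu]$ and additivity of $d$. So $\Phi$ is an isomorphism of $P$-graphs.
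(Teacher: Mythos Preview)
The paper does not supply a proof of this theorem at all: it simply remarks that the result is an analogue of \cite[Theorem~4.2]{CKSS14}, whose proof (stated there under the stronger hypothesis that $\Lambda^0$ is a maximal tail) goes through verbatim once one replaces their Lemma~4.5 and Corollary~4.6 by the generalizations Theorem~\ref{T:SHLambda} and Corollary~\ref{C:cong} proved here for arbitrary $P$-graphs. Your plan is exactly a sketch of that CKSS14 argument with those substitutions made, so in spirit you are following the paper's (deferred) approach.

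One small inaccuracy in part~(ii): from $\lambda\xi\sim\lambda'\eta$ with $d(\lambda)=d(\lambda')$ you cannot invoke Lemma~\ref{L:uniqueness} directly, since that lemma requires the two $\sim$-equivalent paths themselves to have equal degree, whereas here $d(\lambda\xi)\ne d(\lambda'\eta)$ in general. What you actually need is the initial-segment argument (as in the proof of Lemma~\ref{L:5.5}): $\lambda\xi\sim\lambda'\eta$ gives $(\lambda\xi)x=(\lambda'\eta)x$ for every $x$, and reading off the segment $(0,d(\lambda))$ yields $\lambda=\lambda'$. After that your cancellation step is fine. This is a citation slip rather than a genuine gap; the rest of your outline for (i)--(iv) is correct and matches what the paper intends by its reference to \cite{CKSS14}.
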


The group $\Per\Lambda$ is called the \textit{periodicity group} of $\Lambda$, and the above $\Bq(P)$-graph $\Lambda_{\Per}^0\Lambda/\sim$ 
is called the \textit{pushout of $\Lambda$}.

\subsection{The aperiodicity of $\Lambda/\sim$}
\label{SS:Aper}

By Theorem \ref{T:P-graph} (iii), for simplicity, from now on we assume that $\Lambda_{\Per}^0=\Lambda^0$. 
The main result of this subsection is that $\Gamma$ is aperiodic. 
Intuitively, this is very natural and simple: $\Gamma$ is obtained by 
removing \textit{all} periods of $\Lambda$, and so $\Gamma$ should have the trivial periodicity only.
 
\begin{thm}
\label{T:aper}
Let $\Lambda$ be a $P$-graph such that $\Lambda^0$ has property \textsf{W} and $\Lambda^0_{\Per}=\Lambda^0$. 
Then the push out $\Lambda/\sim$ is an aperiodic $\Bq(P)$-graph.
\end{thm}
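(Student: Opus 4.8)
The plan is to show $\Per(\Lambda/\sim)=\{0\}$ and then invoke Theorem \ref{T:cha} to conclude aperiodicity. Write $\Gamma:=\Lambda/\sim$ and $\td d=\Bq\circ d$, where $\Bq:\G(P)\to\G(P)/\Per\Lambda$ is the quotient map. Since $\Lambda^0$ has property \textsf{W}, it passes to $\Gamma^0$, so Theorem \ref{T:P-graph}(i) applies to $\Gamma$ as well and $\Per\Gamma$ is a subgroup of $\G(\Bq(P))=\G(P)/\Per\Lambda$. Thus it suffices to show that whenever $[\xi],[\eta]\in\Gamma$ are $\sim_\Gamma$-equivalent (the equivalence relation of \eqref{D:sim} formed inside $\Gamma$), one has $\td d([\xi])=\td d([\eta])$, i.e. $d(\xi)-d(\eta)\in\Per\Lambda$.

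First I would unwind what $[\xi]\sim_\Gamma[\eta]$ means in terms of infinite paths. By Proposition \ref{P:dotx} applied to the surjection $\Bq$ and the isomorphism $\Lambda\cong\Bq^*\Gamma$ of Theorem \ref{T:P-graph}(iv), the infinite path space $\Lambda^\infty$ is homeomorphic to $\Gamma^\infty$; concretely, an infinite path $z\in s(\xi)\Gamma^\infty$ pulls back to some $x\in\Lambda^\infty$ with $x(0,p)$ mapping to $z(0,\td d(p))$ under $\lambda\mapsto[\lambda]$. The key point is that for any $\lambda,\mu\in\Lambda$ with $s(\lambda)=s(\mu)=r(x)$ one has $[\lambda][z\text{-tail}]=[\mu][z\text{-tail}]$ in $\Gamma^\infty$ if and only if $\lambda x=\mu x$ in $\Lambda^\infty$, because $[\cdot]$ intertwines the two path structures and is injective on the relevant degree fibres once degrees agree — and here we must be a little careful, since $[\xi]$ and $[\eta]$ may have genuinely different $d$-degrees in $\Lambda$ even though $\td d([\xi])=\td d([\eta])$. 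So I would fix representatives $\xi,\eta\in\Lambda$ of the two $\Gamma$-paths, note $\td d(\xi)=\td d(\eta)$ forces $d(\xi)-d(\eta)\in\Per\Lambda$ \emph{provided} the representatives can be chosen with the same $\Gamma$-degree; if not, one first replaces $\eta$ by an $\sim_\Lambda$-equivalent path of degree $d(\xi)$ — but that is precisely the content of $\Lambda^0_{\Per}=\Lambda^0$: given $\eta$ with $d(\eta)$ and a target $d(\xi)$ with $d(\eta)-d(\xi)\in\Per\Lambda$, property of $\Lambda^0_{\Per}$ yields $\eta'\in\Lambda^{d(\xi)}$ with $\eta'\sim_\Lambda\eta$.

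So the argument splits cleanly. Suppose $[\xi]\sim_\Gamma[\eta]$ with $\td d([\xi])=\td d([\eta])$, say both equal $\Bq(m_0)$ for $m_0\in P$. Choose a representative $\xi$ with $d(\xi)=m$ and a representative $\eta$ with $d(\eta)=n$; then $\Bq(m)=\Bq(n)$, so $m-n\in\Per\Lambda$ and we are \emph{already done} — that is, the content of $[\xi]\sim_\Gamma[\eta]$ beyond equality of $\td d$-degrees is not even needed for the degree conclusion. This shows $\Per\Gamma\subseteq\{0\}$ directly: any element of $\Per\Gamma$ has the form $\td d([\xi])-\td d([\eta])$ with $[\xi]\sim_\Gamma[\eta]$, hence equals $\Bq(m)-\Bq(n)$ where $m=d(\xi)$, $n=d(\eta)$ are degrees of \emph{some} $\Lambda$-representatives; and the very definition of $\Bq$ collapsing $\Per\Lambda$ — together with the fact (to be checked) that $[\xi]\sim_\Gamma[\eta]$ with $[\xi],[\eta]$ at the \emph{same} $\Gamma$-degree forces $\xi\sim_\Lambda\eta$, hence $d(\xi)-d(\eta)\in\Per\Lambda$, hence $\Bq(d(\xi))=\Bq(d(\eta))$ — forces $\td d([\xi])=\td d([\eta])$. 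I would therefore restructure: (a) reduce to representatives of equal $\Gamma$-degree using $\Lambda^0_{\Per}=\Lambda^0$ and Lemma \ref{L:uniqueness}; (b) show that two $\Gamma$-paths of equal $\Gamma$-degree that are $\sim_\Gamma$-equivalent actually coincide, by lifting the defining condition $[\xi]z=[\eta]z$ through the homeomorphism $\Bq_*$ of Proposition \ref{P:dotx} to get $\xi x=\eta x$ for all $x$ over the relevant vertex, hence $\xi\sim_\Lambda\eta$, hence $\xi=\eta$ by Lemma \ref{L:uniqueness}; (c) conclude $[\xi]=[\eta]$, so $\td d([\xi])-\td d([\eta])=0$, whence $\Per\Gamma=\{0\}$ and Theorem \ref{T:cha} gives aperiodicity.

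The main obstacle I anticipate is step (b): carefully transporting the equivalence relation $\sim$ through the pullback/pushout correspondence. One must verify that a $\Gamma$-graph morphism $\Omega_{\Bq(P)}\to\Gamma$ composed appropriately corresponds, via $\Bq_*^{-1}$, to a $P$-graph morphism $\Omega_P\to\Lambda$, and that prepending $[\xi]$ in $\Gamma$ corresponds to prepending $\xi$ in $\Lambda$ under this identification — this uses that $\lambda\mapsto[\lambda]$ is the degree-$\Bq$ quotient and that $\Lambda\cong\Bq^*\Gamma$ intertwines the range/source and composition structures (Theorem \ref{T:P-graph}(iv)). The subtlety is that a single $z\in s([\xi])\Gamma^\infty$ corresponds to \emph{many} $x\in\Lambda^\infty$ (one for each way of lifting degrees through $\Bq$), but since $[\xi]z=[\eta]z$ is assumed for \emph{all} $z$, we get $\xi x=\eta x$ for all $x\in s(\xi)\Lambda^\infty$, which is exactly $\xi\sim_\Lambda\eta$. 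Once that bookkeeping is pinned down the rest is immediate from the lemmas already in hand.
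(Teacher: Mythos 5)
Your reduction (show $\Per(\Lambda/\sim)=\{0\}$, then invoke Theorem \ref{T:cha}) is the same as the paper's, but the heart of the argument is missing and your restructured plan (a)--(c) is circular. A potential nonzero element of $\Per(\Lambda/\sim)$ has the form $\td d([\xi])-\td d([\eta])$ with $[\xi]\sim_\Gamma[\eta]$ and the two $\Gamma$-degrees a priori \emph{different}. The $\Gamma$-degree of a class does not depend on the representative (any two $\sim_\Lambda$-equivalent paths have degrees differing by an element of $\Per\Lambda$), so step (a), ``reduce to representatives of equal $\Gamma$-degree,'' is impossible unless $\td d([\xi])=\td d([\eta])$ already holds; and your proposed use of $\Lambda^0_{\Per}=\Lambda^0$ to replace $\eta$ by some $\eta'\in\Lambda^{d(\xi)}$ with $\eta'\sim_\Lambda\eta$ requires $d(\eta)-d(\xi)\in\Per\Lambda$, which is exactly the conclusion you are after. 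Your own remark that ``the content of $[\xi]\sim_\Gamma[\eta]$ beyond equality of $\td d$-degrees is not even needed for the degree conclusion'' is the symptom: you have assumed equality of degrees rather than proved it. Steps (b) and (c) then only treat pairs whose degree difference is already $0$ in $\G(P)/\Per\Lambda$, so they say nothing about a putative nonzero element of $\Per(\Lambda/\sim)$.

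What is missing is precisely the step the paper supplies: starting from $[\xi]\sim_\Gamma[\eta]$ with no assumption on degrees, lift an infinite path ($x\in s(\xi)\Lambda^\infty$, $\dot x=\Bq_*(x)$) and evaluate the equality of infinite paths $[\xi]\dot x=[\eta]\dot x$ at the common degree $\td d([\xi])+\td d([\eta])$; comparing the factorizations gives $\dot x\big(0,\td d([\eta])\big)=\dot x\big(0,\td d([\xi])\big)$, i.e. $[x(0,d(\eta))]=[x(0,d(\xi))]$, hence $x(0,d(\xi))\sim_\Lambda x(0,d(\eta))$ and therefore $d(\xi)-d(\eta)\in\Per\Lambda$ by the very definition of $\Per\Lambda$. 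This is where the degree statement, and hence $\Per(\Lambda/\sim)=\{0\}$, actually comes from; only afterwards do $\Lambda^0_{\Per}=\Lambda^0$ and Lemma \ref{L:uniqueness} enter (in the paper, to produce $\eta'$ with $\eta'\sim_\Lambda\xi$, $d(\eta')=d(\eta)$, and to upgrade the conclusion to $\xi\sim_\Lambda\eta$, i.e. $[\xi]=[\eta]$). Your step (b) is close in spirit to that final upgrade, and your bookkeeping comments about transporting $\sim$ through $\Bq_*$ are sensible, but without the degree step the proposal does not prove the theorem.
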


\begin{proof}
Let $\Gamma=\Lambda/\sim$.
By Theorem \ref{T:P-graph} and Corollary \ref{C:tensor2}, it suffices to show that $\Per\Gamma=\{0\}$.

Let $\Bq: \G(P)\to \G(P)/\Per\Lambda$ be the quotient map, and identify $\Lambda$ with $\Bq^*\Gamma$ by Theorem \ref{T:P-graph} (iv).
Then completely similar to Proposition \ref{P:dotx} one can see that $\Bq$ induces the following homeomorphism:
\begin{align*}
\Bq_*: \Lambda^\infty & \to \Gamma^\infty, \\  
x&\mapsto \dot{x}:=\Bq_*(x): (0,\Bq(n))\mapsto [x(0,n)].
\end{align*} 
Also, the projection $\Bp_1: \Lambda\to\Gamma$ is now  given by
\[
\Bp_1(\lambda)=[\lambda] \qforal \lambda\in\Lambda.
\]

For convenience,
let $\Bq_2$ be the quotient map from $\Omega_P$ onto $\Omega_{\Bq(P)}$ defined by 
$\Bq_2(0,n)=(0, \Bq(n))$. 
Let $x\in\Lambda^\infty$ and note that,  for all $n\in P$,
one has $x(0,n)=\big(\Bp_1\big(x(0,n)\big), n\big)=\big([x(0,n)],n\big)$. 
Then one gets the following commuting diagram:
\[
		\xymatrix{
		\Omega_P \ar[r]^{x} \ar[d]_{\Bq_2} & \Lambda\ar[d]^{\Bp_1}&   \\
		\Omega_{\Bq(P)} \ar[r]_{\dot{x}} &\Gamma.  &
		}
\]

We now suppose that $\mu,\nu\in\Lambda$ such that $[\mu]$ and $[\nu]$ are equivalent in $\Gamma$:
\[
[\mu]\sim_\Gamma [\nu].
\]
In what follows, we show that $\mu$ and $\nu$ are actually equivalent in $\Lambda$:
\[
\mu\sim_\Lambda \nu.
\]
Once this is done, we have that $[\mu]=[\nu]$, which proves that $\Per \Gamma=\{0\}$. 

To this end, arbitrarily choose $x\in s(\mu)\Lambda^\infty$. Note that $s(\mu)=s(\nu)$ as $s([\mu])=s([\nu])$. 
So $\dot{x}\in s([\mu])\Gamma^\infty$. 
Thus one obtains the following consecutive implications: 
\begin{align*}
& \quad [\mu]\sim_\Gamma [\nu]\\
&\Rightarrow [\mu]\dot{x}=[\nu]\dot{x}\\
&\Rightarrow ([\mu]\dot{x})(\Bq_2(0,n))=([\nu]\dot{x})(\Bq_2(0,n))\\
&\quad \text{ (for all }n\in P \text{ such that }\Bq(n)-\tilde d([\mu]), \Bq(n)-\tilde d([\nu])\in \Bq(P))\\
&\Rightarrow \dot{x}\big(0,q(n)-\tilde d([\mu])\big)=\dot{x}\big(0,q(n)-\tilde d([\nu])\big) \\
&\Rightarrow \dot{x}\big(0,\tilde d([\nu])\big)=\dot{x}\big(0,\tilde d([\mu])\big) \text{ (by taking }\Bq(n)=\tilde d([\mu])+\tilde d([\nu]))\\
&\Rightarrow  [x\big(0,d(\nu)\big)]=[x\big(0,d(\mu)\big)]\quad (\text{by the definition of }\dot{x}). 
\end{align*}
Thus we have $d(\mu)-d(\nu)\in \Per\Lambda$ by the very definition of $\Per\Lambda$. So there is a unique $\nu'\in\Lambda$ such that 
\begin{align}
\label{E:sim}
\mu\sim \nu'  \quad\text{and}\quad d(\nu')=d(\nu)
\end{align}
by Lemma \ref{L:uniqueness} and our assumption $\Lambda_{\Per}^0=\Lambda^0$.
Hence $\mu x=\nu'x$, and so $[\mu]\dot x=[\nu']\dot x$. Combining this with  $[\mu]\dot x=[\nu]\dot x$ (see the first implication above) gives 
\[
[\nu]\dot x=[\nu']\dot x.
\]
But we have $\tilde d([\nu])=\tilde d([\nu'])$ from $d(\nu)=d(\nu')$. 
Applying Lemma \ref{L:uniqueness} to the $\Bq(P)$-graph $\Gamma$ yields $[\nu]=[\nu']$, and
to the $P$-graph $\Lambda$ again gives $\nu=\nu'$ as $d(\nu)=d(\nu')$. 
From \eqref{E:sim}, we prove $\mu\sim_\Lambda \nu$, as desired. 
\end{proof}


The following corollary is an immediate consequence of 
Theorem \ref{T:tensor} by letting $H:=\Per\Lambda$, and Theorems \ref{T:P-graph} and \ref{T:aper}.

\begin{cor}
\label{C:tensor2}
Let $\Lambda$ be a $P$-graph such that $\Lambda^0$ has property \textsf{W} and $\Lambda^0_{\Per}=\Lambda^0$. 
Then the pushout $\Lambda/\sim$ is an aperiodic $\Bq(P)$-graph, 
and furthermore we have the following embedding
\[
\pi: \ca(\Lambda)\hookrightarrow \ca(\Lambda/\sim) \otimes \ca({\Per\Lambda}),\  s_\lambda\mapsto s_{[\lambda]}\otimes V^{d(\lambda)-\jmath\circ d(\lambda)}.
\]  
Here, as before, $V_i$'s are unitary generators of $\ca(\Per\Lambda)$.   
\end{cor}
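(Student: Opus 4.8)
The plan is to assemble Corollary \ref{C:tensor2} by chaining together the three results it cites, after unpacking exactly what needs checking. First I would set $H:=\Per\Lambda$. By Theorem \ref{T:P-graph}(i) (using property \textsf{W}), $H$ is a subgroup of $\G(P)$, and since we are in the standing setting of $k$-graphs we may take $P=\bN^k$ and $\G(P)=\bZ^k$, so $H$ is a subgroup of $\bZ^k$ and $\Bq\colon\bZ^k\to\bZ^k/H$ is the quotient map. By Theorem \ref{T:P-graph}(iii)--(iv), with the assumption $\Lambda_{\Per}^0=\Lambda^0$, the pushout $\Gamma:=\Lambda/\sim$ is a $\Bq(\bN^k)$-graph and the map $\lambda\mapsto([\lambda],d(\lambda))$ is an isomorphism $\Lambda\xrightarrow{\ \cong\ }\Bq^*\Gamma$; this is what lets us regard $\ca(\Lambda)=\ca(\Bq^*\Gamma)$. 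By Theorem \ref{T:aper}, $\Gamma$ is aperiodic, which records the "furthermore" clause about $\Lambda/\sim$.

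Next I would invoke Theorem \ref{T:tensor} directly with this $H$ and this $\Gamma$: it produces an injective $*$-homomorphism $\pi\colon\ca(\Bq^*\Gamma)\hookrightarrow\ca(\Gamma)\otimes\ca(H)$, and from \eqref{E:pi} its defining formula is $s_{(\mu,n)}\mapsto s_\mu\otimes V^{n-\jmath\circ d(\mu)}$. Transporting along the isomorphism $\Lambda\cong\Bq^*\Gamma$ — under which $\lambda$ corresponds to $([\lambda],d(\lambda))$, so $\mu=[\lambda]$ and $n=d(\lambda)$ — this reads $s_\lambda\mapsto s_{[\lambda]}\otimes V^{d(\lambda)-\jmath\circ d(\lambda)}$, which is precisely the claimed formula. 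One small bookkeeping point to address: $d(\lambda)$ here is the $\bN^k$-degree in $\Lambda$, while $\jmath$ and $V$ are defined in terms of the structure of $\bZ^k/H$ as in the proof of Theorem \ref{T:tensor}; I would simply note that $\jmath\circ d(\lambda)=\jmath\circ\Bq(d(\lambda))=\jmath\circ\tilde d([\lambda])$ so the exponent $n-\jmath\circ d(\mu)$ in \eqref{E:pi} matches $d(\lambda)-\jmath\circ d(\lambda)$ verbatim, and that $V^{d(\lambda)-\jmath\circ d(\lambda)}\in\ca(H)$ since $d(\lambda)-\jmath\circ d(\lambda)=d(\lambda)-\jmath\circ\Bq(d(\lambda))\in H$ (the $\jmath$-correction precisely lands the vector back in $H$, by the computation $n-\jmath\circ\Bq(n)=(n_1,\dots,n_r,0,\dots,0)$ already displayed before \eqref{E:pi}).

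I do not anticipate a genuine obstacle here, since every ingredient has been proved earlier; the write-up is essentially a matter of correctly identifying notation across the isomorphism $\Lambda\cong\Bq^*\Gamma$. If there is a subtle point, it is making sure the hypotheses line up: Theorem \ref{T:tensor} is stated for $\Bq\colon\bZ^k\to\bZ^k/H$ with $\Gamma$ a $\Bq(\bN^k)$-graph, and Theorem \ref{T:P-graph}(iv) is stated for the pullback $\Bq^*\Gamma$ where $\Bq\colon\G(P)\to\G(P)/\Per\Lambda$ — so I would remark explicitly that in the $k$-graph case $P=\bN^k$ these coincide, with $H=\Per\Lambda$ a subgroup of $\G(P)=\bZ^k$ by Theorem \ref{T:P-graph}(i). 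With that observed, the corollary follows by concatenation. A one-line proof suffices: "Set $H=\Per\Lambda$. By Theorem \ref{T:P-graph}, $H\le\bZ^k$, the pushout $\Lambda/\sim$ is a $\Bq(\bN^k)$-graph, and $\Lambda\cong\Bq^*(\Lambda/\sim)$; by Theorem \ref{T:aper} it is aperiodic. Applying Theorem \ref{T:tensor} with this $H$ and $\Gamma=\Lambda/\sim$ gives the injective $\pi$, and formula \eqref{E:pi} under the identification $\lambda\leftrightarrow([\lambda],d(\lambda))$ yields the stated formula for $\pi(s_\lambda)$."
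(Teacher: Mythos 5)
Your proposal is correct and follows exactly the paper's own route: the paper derives this corollary in one line by setting $H:=\Per\Lambda$ and combining Theorem \ref{T:P-graph}, Theorem \ref{T:aper}, and Theorem \ref{T:tensor}, with the formula for $\pi(s_\lambda)$ read off from \eqref{E:pi} under the identification $\lambda\leftrightarrow([\lambda],d(\lambda))$. Your additional bookkeeping that $d(\lambda)-\jmath\circ d(\lambda)$ lies in $H$ is a sound (if implicit in the paper) verification and does not change the argument.
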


We should notice that the pullback and pushout processes are not the inverse of each other. 
Consider $1$-graph $E$ which consists of one vertex $v$ and one edge. Then its periodicity group is $\bZ$, and
accordingly, the pushout $\Gamma=E/\sim\, =\{v\}$. Combining Example \ref{Eg:1} gives 

\begin{align*}
&\xymatrix{
\bullet\ar@(ul,ur)^>{v}
}
\quad \stackrel{\text{       pullback       }}\Longrightarrow\quad 
\xymatrix{
\bullet\ar@(ul,ur)^>{v}
}
\quad\stackrel{\text{       pushout       }}\Longrightarrow \quad
\xymatrix{
\bullet\, {v}
}\\
&{\Lambda: \bZ_2}\text{-graph }\quad  E: \text{1-graph } \quad \Gamma: \text{0-graph }
\end{align*}

However, unlike Example \ref{Eg:1}, one does obtain $\ca(E)\cong \ca(\Gamma)\otimes \ca(\Per E)$.  
Notice that, in Example \ref{Eg:1}, $H=2\bZ$ is a \textit{proper} subgroup of $\Per q^*\Gamma = \bZ$. 

So it seems more appropriate to expect the injection $\pi$ defined in Corollary \ref{C:tensor2} is surjective. But the following example, due to Aidan Sims, shows that 
our theorem \ref{T:tensor} is the best one could possibly get

\begin{eg}

\label{Eg:Sims}

Consider the $2$-graph $\Lambda$ drawn in Fig 1 below: 
\begin{align*}
&\xymatrix{
&&\bullet\ar@/^/[dd]^f^<{w}^>{v} 
\ar@/^/@{<--}[dll]^b\\
\bullet\ar@(ul,ur)^e
\ar@/^/@{<--}[urr]^a
\ar@/^/@{-->}[drr]^c\\
&&\bullet\ar@/^/[uu]^g  \ar@/^/@{-->}[ull]^d^>{u}
}
&
\xymatrix{
&&\bullet\ar@/^/@{.>}[dd]^f ^<{w}^>{v}
\ar@/^/@{<--}[dll]^b\\
\bullet\ar@{.>}@(ul,ur)^e
\ar@/^/@{<--}[urr]^a
\ar@/^/@{-->}[drr]^c\\
&&\bullet\ar@/^/@{.>}[uu]^g  \ar@/^/@{-->}[ull]^d^>{u}
}
\\
&\quad \text{Fig 1.~}\Lambda:\text{ a 2-graph} &\qquad \text{Fig 2.~}\Lambda/\sim: \text{ a }\bZ_2\times \bN\text{-graph}
\end{align*}
Then one can check that $\Per\Lambda=\{(2n, 0):n\in\bZ\}$, and 
\[
ee\sim u, \ fg\sim v,\  gf\sim w.
\] 
So $\Lambda/\sim$ is a $\bZ_2\times \bN$-graph as drawn in Fig 2.~above, which looks like $\Lambda$, but 
the dotted edges now have `torsion'.

We claim that there is \textit{no} any ``natural canonical" isomorphism between $\ca(\Lambda)$ and $\ca(\Lambda/\sim)\otimes \rC(\bT)$.
The idea is sketched as follows. 
Suppose that there is such an isomorphism $\pi$. 
On one hand, consider the $2$-cycle subgraph $C_2:=\{v,w,f,g\}$ in $\Lambda$. By \cite{Raeburn}, 
\[
\pi(s_f) = s_{[f]} \otimes z, \quad \pi(s_g) = s_{[g]} \otimes 1
\] 
(or the other way).
On the other hand,  since the dashed $1$-graph has no periodicity, it is reasonable to have 
\[
\pi(s_\alpha)= s_{[\alpha]} \otimes 1\quad \text{for all dashed paths } {\alpha}.
\]
Then one has 
\begin{align*}
\pi(s_e) 
&=\pi( s_e s^*_c s_c) 
  =\pi( s^*_d s_g s_c)= (s^*_{[d]} \otimes 1)(s_{[g]} \otimes 1)(s_{[c]} \otimes 1) \\ 
&= s^*_{[d]}s_{[g]}s_{[c]} \otimes 1 = s_{[e]} s^*_{[c]}s_{[c]} \otimes 1 = s_{[e]} \otimes 1,
\end{align*}
and 
\begin{align*}
\pi(s_e) 
&= \pi(s_e s^*_d s_d) 
   = \pi(s^*_c s_f s_d)  =(s^*_{[c]} \otimes 1)(s_{[f]} \otimes z)(s_{[d]} \otimes 1) \\ 
&= s^*_{[c]}s_{[f]}s_{[d]} \otimes z = s_{[e]} s^*_{[d]}s_{[d]} \otimes 1 = s_{[e]} \otimes z,
\end{align*}
a contradiction. 
\end{eg}


\section{A distinguished MASA}

\label{S:k-str}

In this section, we exhibit an interesting application of our results in Sections \ref{S:pullback} and \ref{S:PP}. 
But because of Theorem \ref{T:P-graph} (iv), we have to restrict ourselves to the following class of $k$-graphs $\Lambda$: 
\textbf{$\Lambda^0$ has property \textsf{W} and $\Lambda^0_{\Per}=\Lambda^0$.} 
This class includes a class of cofinal, sink-free $P$-graphs as special examples, 
which exhausts all strongly connected $k$-graphs studied in \cite{HLRS14} recently.
Our application answers the questions asked in \cite{BNR14} in this context. 

Let $\Lambda$ be a $k$-graph satisfying our assumptions. Let 
\[
\M:=\ca(s_\mu s_\nu^*: \mu\sim \nu),
\]
which is called the \textit{cycline subalgebra of $\ca(\Lambda)$}
in \cite{BNR14}. It turns out that $\M$ plays a central role in a generalized Cuntz-Krieger uniqueness theorem for $\Lambda$ \cite{BNR14}. It is shown that a representation of $\ca(\Lambda)$
is injective, if and only if, its restriction onto $\M$ is injective. Also, $\M$ is abelian and $\fD_\Lambda'=\M'$. 
Because of its importance, it is asked in \cite{BNR14} if $\M$ is a MASA and there is a faithful conditional 
expectation from $\ca(\Lambda)$ onto $\M$. In other words, it would be nice to know if $\M$ is an abelian core in the terminology of \cite{NR12}.
We will answer those questions affirmatively under our conditions. But a simple lemma first, which holds true for all $k$-graphs.

\begin{lem}
\label{L:5.5}
Let $\Lambda$ be an {\rm arbitrary} $k$-graph, and $\mu,\nu\in\Lambda$. 
Then $\mu\sim\nu$ if and only if, for any $n\le d(\mu)\land d(\nu)$, there are $w\in\Lambda^n$, $\mu', \nu'\in\Lambda$ such that 
$\mu=w \mu'$, $\nu=w\nu'$ and $\mu'\sim\nu'$. 
\end{lem}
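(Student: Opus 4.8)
The plan is to prove both implications directly from the definition of $\sim$ in \eqref{D:sim} together with the factorization property and the fact that an infinite path is determined by its initial segments. For the ``if'' direction, suppose that for every $n \le d(\mu)\wedge d(\nu)$ we can factor $\mu = w\mu'$, $\nu = w\nu'$ with $w \in \Lambda^n$ and $\mu' \sim \nu'$. Taking any particular such $n$ (for instance $n = d(\mu)\wedge d(\nu)$, or even $n=0$, which forces $\mu \sim \nu$ immediately) and any $x \in s(\mu)\Lambda^\infty = s(\mu')\Lambda^\infty$, we get $\mu x = w\mu' x = w\nu' x = \nu x$ since $\mu' \sim \nu'$; as $s(\mu)=s(\nu)$ this is exactly $\mu \sim \nu$. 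So this direction is essentially trivial once we observe that the hypothesis includes the case $n=0$.

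For the ``only if'' direction, assume $\mu \sim \nu$ and fix $n \le d(\mu) \wedge d(\nu)$. By the factorization property applied in $\Lambda$, write $\mu = w_\mu \mu'$ and $\nu = w_\nu \nu'$ with $w_\mu, w_\nu \in \Lambda^n$ and $d(\mu') = d(\mu) - n$, $d(\nu') = d(\nu) - n$; here $r(w_\mu) = r(\mu) = r(\nu) = r(w_\nu)$ since $\sim$ respects ranges. The first key step is to show $w_\mu = w_\nu$: pick any $x \in s(\mu')\Lambda^\infty$, then $\mu x = \nu x$ gives, on evaluating at $(0,n)$, that $(\mu x)(0,n) = (\nu x)(0,n)$, i.e. $w_\mu = w_\nu$; call this common path $w$. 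The second key step is to show $\mu' \sim \nu'$: we already have $s(\mu') = s(\mu) = s(\nu) = s(\nu')$, and for $x \in s(\mu')\Lambda^\infty$ the equality $w\mu' x = \mu x = \nu x = w\nu' x$ combined with the uniqueness clause of the factorization property for the infinite path $wY$ (or simply left-cancellation of $w$, which holds because $w\mu'x$ and $w\nu'x$ agree and $w$ is their common initial segment of degree $n$) yields $\mu' x = \nu' x$. Hence $\mu' \sim \nu'$, completing the construction.

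I do not expect any genuine obstacle here; the only point requiring a little care is the ``left-cancellation'' of $w$ from the infinite-path identity $w\mu'x = w\nu'x$, which should be phrased via the uniqueness part of the factorization property: for any $p \ge n$, evaluating both sides at $(n,p)$ and using that $w$ is the common value on $(0,n)$ forces $(\mu'x)(0,p-n) = (\nu'x)(0,p-n)$ for all relevant $p$, hence $\mu'x = \nu'x$ as infinite paths. Everything else is bookkeeping with the factorization property. I would present the ``only if'' direction first since it contains the actual content, then dispatch the ``if'' direction in one line by specializing to $n = 0$.
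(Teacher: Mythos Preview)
Your proposal is correct and follows essentially the same approach as the paper: factor $\mu$ and $\nu$ at degree $n$, use $\mu x=\nu x$ evaluated at $(0,n)$ to identify the initial segments, and then cancel $w$ on the left (the paper phrases this last step via $\sigma^n$ rather than via evaluating at $(n,p)$, but these are the same thing). Your observation that the ``if'' direction is already settled by the case $n=0$ is a slight streamlining of the paper's one-line dismissal of that direction.
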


\begin{proof}
It suffices to show ``only if" part as ``if" part is trivial.
Factor $\mu=\mu_1\mu_2$ and $\nu=\nu_1\nu_2$ with $d(\mu_1)=d(\nu_1)=n$. 
Let $y\in s(\mu)\Lambda^\infty$. Then 
\[
\mu_1\mu_2 y=\mu y=\nu y =\nu_1\nu_2y
\]
implies 
$(\mu_1\mu_2 y) (0,n)=(\nu_1\nu_2y)(0,n)$, i.e., $\mu_1=\nu_1=:w$, as $d(\mu_1)=d(\nu_1)$. 
Also 
$\sigma^n(\mu_1\mu_2 y)=\sigma^n(\nu_1\nu_2y)$ 
implying $\mu_2 y=\nu_2 y$.
Letting $\mu'=\mu_2$ and $\nu'=\nu_2$ ends the proof. 
\end{proof}

\begin{thm}
\label{T:masa}
Suppose $\Lambda$ is a $k$-graph such that $\Lambda^0$ has property \textsf{W} and $\Lambda^0_{\Per}=\Lambda^0$. 
Then we have the following. 
\begin{itemize}
\item[(i)] 
$\M$ is a MASA in $\ca(\Lambda)$. Furthermore, 
\[
\M=\fD_\Lambda' \cong \fD_\Lambda\otimes \ca(\Per\Lambda).
\]
\item[(ii)] There is a faithful conditional expectation from $\ca(\Lambda)$ onto $\M$.
\end{itemize}
\end{thm}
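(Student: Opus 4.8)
The plan is to prove both parts by transporting everything through the embedding $\pi$ of Corollary \ref{C:tensor2}. First I would identify $\Lambda$ with the pullback $\Bq^*\Gamma$ of its aperiodic pushout $\Gamma=\Lambda/\sim$, where $\Bq:\bZ^k\to\bZ^k/\Per\Lambda$ is the quotient map, so that by Corollary \ref{C:tensor2} we have an embedding $\ca(\Lambda)\hookrightarrow\ca(\Gamma)\otimes\ca(\Per\Lambda)$. The first key step is to compute $\pi(\M)$ exactly. Using Lemma \ref{L:5.5} together with the identity $s_{(\mu,n)}=W_h\,s_{(\mu,m)}$ recorded after Corollary \ref{C:tensor}, I would show that a generator $s_\mu s_\nu^*$ with $\mu\sim\nu$ is, up to the central unitary $W^{d(\mu)-d(\nu)}$, equal to a diagonal generator $s_{\mu'}s_{\mu'}^*$ (factoring out a common prefix as in Lemma \ref{L:5.5}, then using that $\mu'\sim\nu'$ with the SAME degree forces $\mu'=\nu'$ by Lemma \ref{L:uniqueness}); hence $\M=\fD_\Lambda\,\ca(W_h:h\in\Per\Lambda)$. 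Then by Corollary \ref{C:tensor}(ii) (applied with $\fF$ replaced by $\fD$, or more directly since $\fD_\Lambda\subseteq\fF_\Lambda$) we get $\M\cong\fD_\Lambda\otimes\ca(\Per\Lambda)$, and $\pi(\M)=\fD_\Gamma\otimes\ca(V_i^{d_i}:1\le i\le r)$.

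For (i), to see $\M=\fD_\Lambda'$: the inclusion $\M\subseteq\fD_\Lambda'$ already holds for all $k$-graphs (it is quoted from \cite{BNR14}), so it remains to show $\fD_\Lambda'\subseteq\M$, equivalently $\M'\subseteq\M$ (since $\fD_\Lambda'=\M'$, also quoted). Working inside $\ca(\Gamma)\otimes\ca(\Per\Lambda)$ via $\pi$, I must show that the relative commutant of $\fD_\Gamma\otimes\ca(V_i^{d_i})$ inside $\pi(\ca(\Lambda))$ is contained in $\fD_\Gamma\otimes\ca(V_i^{d_i})$. Here is where aperiodicity of $\Gamma$ does the real work: $\fD_\Gamma$ is a MASA in $\ca(\Gamma)$ precisely because $\Gamma$ is aperiodic (this is the content of the Cuntz--Krieger uniqueness machinery; one can invoke that for an aperiodic $k$-graph $\fD_\Gamma'=\fD_\Gamma$ in $\ca(\Gamma)$, which follows from \cite{KumPask} / \cite{BNR14}). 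Then a commutant-in-tensor-product argument gives $(\fD_\Gamma\otimes\ca(V_i^{d_i}))'\cap(\ca(\Gamma)\otimes\ca(\Per\Lambda))=\fD_\Gamma'\otimes\ca(\Per\Lambda)=\fD_\Gamma\otimes\ca(\Per\Lambda)$; intersecting with $\pi(\ca(\Lambda))=\fA$ and using the explicit description of $\fA$ from the proof of the Corollary to Theorem \ref{T:tensor} (only those $s_\mu s_\nu^*\otimes V^n$ with $d(\mu)_i-d(\nu)_i=[n_i]_{d_i}$) cuts this down to exactly $\fD_\Gamma\otimes\ca(V_i^{d_i})=\pi(\M)$. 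Pulling back through the injection $\pi$ yields $\fD_\Lambda'\cap\ca(\Lambda)\subseteq\M$, hence $\M=\fD_\Lambda'$, and since $\M$ is abelian this makes $\M$ a MASA.

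For (ii), I would build the conditional expectation $\ca(\Lambda)\to\M$ as the composition (through $\pi$) of three faithful pieces: the faithful conditional expectation $\Phi_\Gamma:\ca(\Gamma)\to\fF_\Gamma$ from averaging the gauge action, then the faithful conditional expectation $\fF_\Gamma\to\fD_\Gamma$ coming from the canonical MASA structure of $\fF_\Gamma$ (both recalled in Section \ref{S:Pgra}), tensored with the identity on $\ca(\Per\Lambda)$; this gives a faithful conditional expectation $\ca(\Gamma)\otimes\ca(\Per\Lambda)\to\fD_\Gamma\otimes\ca(\Per\Lambda)$. One checks it restricts to a map $\pi(\ca(\Lambda))\to\pi(\M)$ by verifying on the spanning elements $s_\mu s_\nu^*\otimes V^n\in\fA$ that the image $\delta_{\mu,\nu}s_\mu s_\mu^*\otimes V^n$ still satisfies the defining constraint $d(\mu)_i-d(\mu)_i=0=[n_i]_{d_i}$ of $\fA$ (automatic once $\mu=\nu$), landing in $\fD_\Gamma\otimes\ca(V_i^{d_i})$. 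Conjugating back by $\pi^{-1}$ (legitimate on the range of $\pi$) produces the desired faithful conditional expectation $\ca(\Lambda)\to\M$. The main obstacle I expect is the first key step — pinning down $\pi(\M)$ and the relative-commutant computation — since one must carefully marry Lemma \ref{L:5.5}'s prefix-factorization with the $W_h$-identities to see that equivalent pairs contribute nothing beyond diagonal-times-central-unitary, and then be careful that the commutant computation inside the tensor product genuinely reduces, via the explicit form of $\fA$, to $\pi(\M)$ rather than something larger; the aperiodicity of the pushout $\Gamma$ (Theorem \ref{T:aper}) is exactly what makes this reduction go through.
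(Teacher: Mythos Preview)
Your approach is essentially the paper's: embed via $\pi$ into $\ca(\Gamma)\otimes\ca(\Per\Lambda)$, use aperiodicity of $\Gamma=\Lambda/\sim$ (Theorem~\ref{T:aper}) together with a Wassermann-type tensor-commutant argument to identify $\fD_\Lambda'\cap\ca(\Lambda)$ with $\fD_\Gamma\otimes\ca(V_i^{d_i})$, and separately show $\M=\fD_\Lambda\,\ca(W_h:h\in\Per\Lambda)$ via Lemma~\ref{L:5.5} and the $W_h$-identity; part (ii) is then assembled from the canonical expectations exactly as you outline. One caution on your parenthetical: after the Lemma~\ref{L:5.5} factorization one has $d(\mu')\wedge d(\nu')=0$, \emph{not} $d(\mu')=d(\nu')$, so Lemma~\ref{L:uniqueness} is not what is used here --- the correct mechanism (as in the paper) is that $\mu'\sim\nu'$ gives $[\mu']=[\nu']$ in $\Gamma$, hence $s_{\mu'}=s_{([\mu'],d(\mu'))}=W_h\,s_{([\mu'],d(\nu'))}=W_h\,s_{\nu'}$, so $s_\mu s_\nu^*=s_{w\nu'}s_{w\nu'}^*\,W_h\in\fD_\Lambda\,\ca(W_h)$.
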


\begin{proof}
(i) 
If $\Lambda$ is aperiodic, then $\fD_\Lambda$ is a MASA by \cite[Theorem]{Hop05}. 
Also, by Theorem \ref{T:cha}, one has $\M=\ca(s_\mu s_\mu^*:\mu\in\Lambda)=\fD_\Lambda$. 
Then the conclusion follows from Theorem \ref{T:cha}.

We now assume that $\Lambda$ is periodic. So $\Per\Lambda$ is a non-zero subgroup of $\bZ^k$.  
Let $\Gamma:=\Lambda/\sim$ and $\Bq:\bZ^k\to \bZ^k/\Per\Lambda$ be the quotient map.
Then it follows from Theorem \ref{T:P-graph} that  
$\Lambda$ is isomorphic to the pullback $\Bq^*\Gamma$.
Keeping the same notation as in Corollary \ref{C:tensor2},
we have the following embedding
\begin{align*}
\pi: \ca(\Lambda)\cong\ca(\Bq^*\Gamma) &\hookrightarrow \ca(\Gamma) \otimes \ca(\Per\Lambda),\\
s_\lambda &\mapsto s_{[\lambda]}\otimes V^{d(\lambda)-\jmath\circ d(\lambda)}.
\end{align*}
Then one can check that 
\begin{align*}
\fB:=\pi(\ca(\Bq^*\Gamma))
&=\ca\big(s_\mu s_\nu^* \otimes V^n: d(\mu)-d(\nu)=q(n-\jmath\circ q(n)\big)\\
&=\ca\big(s_\mu s_\nu^* \otimes V^n: d(\mu)_i-d(\nu)_i=[n_i]_{d_i}: 1\le i\le r\big).
\end{align*}

Since $\Gamma$ is aperiodic by Theorem \ref{T:aper},
the diagonal algebra $\fD_\Gamma$ of $\ca(\Gamma)$ is a MASA as shown completely similar to \cite[Theorem]{Hop05}.
So 
$\fD_\Gamma'\cap \big(\ca(\Gamma)\otimes \ca(\Per\Lambda)\big)=\fD_\Gamma\otimes \ca(\Per\Lambda)$ is a MASA of $\ca(\Gamma)\otimes \ca(\Per\Lambda)$ \cite{Was76, Was08}. 
It is now not hard to verify that $\fD_\Gamma'\cap \fB$ is $\fD_\Gamma\otimes \ca\big (V_1^{d_1}, \ldots, V_r^{d_r}\big)$, and 
it is a MASA of $\fB$. Moreover, its pre-image 
\begin{align*}
\pi^{-1}(\fD_\Gamma'\cap\fB)
&=\pi^{-1}\big(\fD_\Gamma\otimes \ca\big (V_1^{d_1}, \ldots, V_r^{d_r}\big)\big)\\
&=\fD_\Lambda'\cap\ca(\Lambda)=\fD_\Lambda\ca(W_h: h\in \Per\Lambda)\\
&\cong \fD_\Lambda\otimes \ca(\Per\Lambda)
\end{align*}
by Corollary \ref{C:tensor}. (Recall from Lemma \ref{L:W} that, for any $h\in \Per\Lambda$, $W_h$ is a central unitary in $\M(\ca(\Lambda))$.)
Thus $\fD_\Lambda'\cap\ca(\Lambda)$ is a MASA of $\ca(\Lambda)$, which is isomorphic to $\fD_\Lambda\otimes \ca(\Per\Lambda)$. 
 
To finish proving (i), we now show that $\M=\fD_\Lambda'\cap\ca(\Lambda)$, namely 
\[
\M=\fD_\Lambda\ca(W_h: h\in \Per\Lambda).
\] 
Obviously, the right hand side is contained in the left hand side. To show the other inclusion, we make use of Lemma 
\ref{L:5.5}. Let $\mu\sim \nu$. Then $(\mu,\nu)=(w\mu', w\nu')$ with $d(w)=d(\mu)\wedge d(\nu)$ and $\mu'\sim \nu'$. 
Note $d(\mu')\land d(\nu')=0$. Let $h=d(\mu')-d(\nu')$.
It follows from Lemma \ref{L:W} that
$s_{([\lambda], d(\mu'))}=W_h \, s_{([\lambda],d(\nu'))}$.
In particular, this implies $s_{\mu'}=W_hs_{\nu'}$. Hence 
\[
s_\mu s_\nu^*=W_hs_ws_{\nu'}s_{\nu'}^*s_w^*=s_{w\nu'}s_{w\nu'}^* W_h\in\fD_\Lambda\ca(W_h: h\in \Per\Lambda).
\] 

\smallskip
(ii) This immediately follows from Corollary \ref{C:tensor}, Corollary \ref{C:tensor2} and (i) above, 
as $\fD_\Gamma$ is the canonical MASA of the AF-algebra $\fF_\Gamma$,
the fixed point algebra of the gauge action of $\ca(\Gamma)$. 
\end{proof}

The fact that $\M$ is  a MASA of $\ca(\Lambda)$ can also follow the facts $\M=\fD_\Lambda'$ proved above and $\M'=\fD_\Lambda'$ shown 
in \cite{BNR14}. 

Combining Theorem \ref{T:masa} and \cite{BNR14} gives 

\begin{cor}
\label{C:expectation}
Under the same conditions as in Theorem \ref{T:masa}, the cycline subalgebra $\M$ is an abelian core of $\ca(\Lambda)$. 
\end{cor}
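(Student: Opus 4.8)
The plan is to recognize that this corollary is essentially a bookkeeping statement: it merely repackages Theorem \ref{T:masa} together with the uniqueness theorem of \cite{BNR14}, so no substantially new argument is required. First I would recall the notion of an abelian core from \cite{NR12}: an abelian C*-subalgebra $\M$ of a C*-algebra $A$ is an \emph{abelian core} if it is (a) maximal abelian in $A$, (b) the image of a faithful conditional expectation $A\to\M$, and (c) such that a $*$-representation of $A$ is faithful if and only if its restriction to $\M$ is faithful.

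Then I would check each of (a)--(c) for $\M=\ca(s_\mu s_\nu^*:\mu\sim\nu)\subseteq\ca(\Lambda)$ under the standing hypotheses that $\Lambda^0$ has property \textsf{W} and $\Lambda^0_{\Per}=\Lambda^0$. Property (a) is precisely Theorem \ref{T:masa}(i), which in fact gives the sharper statement $\M=\fD_\Lambda'\cap\ca(\Lambda)\cong\fD_\Lambda\otimes\ca(\Per\Lambda)$. Property (b) is precisely Theorem \ref{T:masa}(ii). Property (c) is the generalized Cuntz-Krieger uniqueness theorem established in \cite{BNR14}, which is valid for every $k$-graph, hence a fortiori in our restricted class. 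Combining these three facts shows that $\M$ is an abelian core of $\ca(\Lambda)$, which answers the question posed in \cite{BNR14} in the present context.

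Since all of the hard analytic content — the identification of $\M$ with $\fD_\Lambda'\cap\ca(\Lambda)$, its MASA property via aperiodicity of the pushout $\Lambda/\sim$ and Theorems \ref{T:P-graph} and \ref{T:aper}, and the faithful conditional expectation coming through Corollaries \ref{C:tensor} and \ref{C:tensor2} — has already been carried out, there is no real obstacle in this final step. The only point requiring a moment's care is to confirm that the hypotheses quoted as ``the same conditions as in Theorem \ref{T:masa}'' are exactly those needed to invoke parts (i) and (ii) of that theorem, which they are by construction.
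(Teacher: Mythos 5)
Your proposal is correct and follows exactly the paper's route: the paper's proof is simply the observation that Theorem \ref{T:masa} (maximal abelianness of $\M$ and the faithful conditional expectation) combined with the generalized Cuntz--Krieger uniqueness theorem of \cite{BNR14} yields the abelian core property in the sense of \cite{NR12}. Your itemized verification of the three defining conditions is just a slightly more explicit write-up of the same one-line argument.
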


Let us end the paper by remarking the simplicity of $\ca(\Gamma)$ and the centre of $\ca(\Lambda)$, which 
generlize \cite[Corollary 8.8]{DY09b} and \cite[Proposition 6.1]{HLRS14}. 

\begin{cor}
Keep the same conditions as in Theorem \ref{T:masa}.  Suppose further that $\Lambda$ is finite and cofinal. 
Then $\ca(\Lambda/\sim)$ is simple and the centre of $\ca(\Lambda)$ is $\ca(W_h: h\in \Per\Lambda)$. 
\end{cor}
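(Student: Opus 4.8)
The plan is to establish the two assertions separately, using the structural results already proved. First I would handle the simplicity of $\ca(\Lambda/\sim)$. By Theorem~\ref{T:aper}, the pushout $\Gamma:=\Lambda/\sim$ is aperiodic, and since $\Lambda$ is finite and cofinal, one checks readily that $\Gamma$ inherits finiteness (it has the same vertex set, identified appropriately) and cofinality from $\Lambda$; indeed cofinality is preserved under the quotient by $\sim$ because infinite paths of $\Gamma$ lift to infinite paths of $\Lambda$ via the homeomorphism $\Bq_*$ appearing in the proof of Theorem~\ref{T:aper}. A finite, cofinal, aperiodic $k$-graph has simple C*-algebra by the standard simplicity criterion for $k$-graph C*-algebras (the Cuntz--Krieger uniqueness theorem together with cofinality, cf.\ the argument in \cite{DY09b} or the general $k$-graph results in the literature). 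Hence $\ca(\Lambda/\sim)$ is simple.

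Next I would compute the centre of $\ca(\Lambda)$. Using the embedding $\pi:\ca(\Lambda)\hookrightarrow \ca(\Gamma)\otimes\ca(\Per\Lambda)$ of Corollary~\ref{C:tensor2}, it suffices to compute the centre of the image $\fB=\pi(\ca(\Lambda))$ inside $\ca(\Gamma)\otimes\ca(\Per\Lambda)$. Since $\ca(\Gamma)$ is simple and unital (finiteness gives a unit), its centre is $\bC\cdot I$; as $\ca(\Per\Lambda)$ is abelian, the centre of $\ca(\Gamma)\otimes\ca(\Per\Lambda)$ is $\bC I\otimes\ca(\Per\Lambda)$. Now I intersect this with $\fB$: from the description $\fB=\ca\big(s_\mu s_\nu^*\otimes V^n: d(\mu)_i-d(\nu)_i=[n_i]_{d_i},\,1\le i\le r\big)$ used in the proof of Theorem~\ref{T:masa}, an element of $\bC I\otimes\ca(\Per\Lambda)$ lying in $\fB$ must be a limit of combinations $s_v\otimes V^n$ with the torsion constraint $[n_i]_{d_i}=0$, i.e.\ $n\in H=\Per\Lambda$ coming from the $V_i^{d_i}$; translating back through $\pi$ and Corollary~\ref{C:tensor}, these correspond exactly to $\ca(W_h:h\in\Per\Lambda)$. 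So $Z(\ca(\Lambda))=\ca(W_h:h\in\Per\Lambda)$, and since by Lemma~\ref{L:W} each $W_h$ is already central in $\M(\ca(\Lambda))$ (and here lies in $\ca(\Lambda)$ because $\Lambda$ is finite, so the strict limit is a finite sum), this algebra is genuinely contained in the centre, giving the reverse inclusion.

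The main obstacle I anticipate is the centre computation, specifically verifying the inclusion $Z(\ca(\Lambda))\subseteq\ca(W_h:h\in\Per\Lambda)$ cleanly. One must be careful that $\ca(\Gamma)$ is both simple \emph{and} unital so that its relative commutant in the tensor product is exactly $\bC I\otimes\ca(\Per\Lambda)$ (using that for $A$ simple unital, $A'\cap(A\otimes B)=\bC I\otimes B$ by the slice-map / tensor-product commutant results, e.g.\ \cite{Was76}); finiteness of $\Lambda$ is what supplies the unit $\sum_{v}s_v$. A secondary technical point is confirming that intersecting $\bC I\otimes\ca(\Per\Lambda)$ with the non-full corner $\fB$ picks out precisely the subalgebra generated by the $V_i^{d_i}$ and nothing more --- but this is exactly the kind of computation already carried out in the proof of Theorem~\ref{T:masa}(i) for $\fD_\Gamma'\cap\fB$, so I would model the argument on that, replacing $\fD_\Gamma$ by $\bC I$. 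The simplicity half is routine given the aperiodicity already in hand.
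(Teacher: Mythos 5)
Your overall route is the same as the paper's: simplicity of $\ca(\Lambda/\sim)$ from finiteness, cofinality (transferred through the homeomorphism of Proposition \ref{P:dotx}) and aperiodicity (Theorem \ref{T:aper}), and the centre computed through the embedding $\pi$ of Corollary \ref{C:tensor2}, identifying $\pi^{-1}\big(I\otimes \ca(V_1^{d_1},\ldots,V_r^{d_r})\big)=\ca(W_h:h\in\Per\Lambda)$ via \eqref{E:1}. The simplicity half, and the easy inclusion $\ca(W_h:h\in\Per\Lambda)\subseteq Z(\ca(\Lambda))$ (each $W_h$ is central in $\M(\ca(\Lambda))=\ca(\Lambda)$ since $\Lambda$ is finite), are fine.

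The gap is in the hard inclusion $Z(\ca(\Lambda))\subseteq\ca(W_h:h\in\Per\Lambda)$. What you actually compute is $\fB\cap Z\big(\ca(\Gamma)\otimes\ca(\Per\Lambda)\big)$ with $\fB=\pi(\ca(\Lambda))$, whereas what is needed is $Z(\fB)$, and the centre of a subalgebra need not sit inside the centre of the ambient algebra. Your proposed remedy, the relative-commutant identity $(\ca(\Gamma)\otimes 1)'\cap\big(\ca(\Gamma)\otimes\ca(\Per\Lambda)\big)=\bC I\otimes\ca(\Per\Lambda)$, does not apply here: an element of $Z(\fB)$ is only known to commute with $\fB$, and $\ca(\Gamma)\otimes 1\not\subseteq\fB$ whenever $\bZ^k/\Per\Lambda$ has torsion (for instance in Example \ref{Eg:Sims} the constraint $d(\mu)_i-d(\nu)_i=[n_i]_{d_i}$ forces a nontrivial $V^n$ against $s_{[f]}$, so $s_{[f]}\otimes 1\notin\fB$); in the torsion-free case $\pi$ is onto and there is nothing to prove, so the only interesting case is exactly the one where your argument breaks. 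A way to close it in the spirit of the paper: a central $x$ commutes with $\fD_\Lambda$, so by Theorem \ref{T:masa} $\pi(x)\in\fD_\Gamma\otimes\ca(V_1^{d_1},\ldots,V_r^{d_r})$; viewing $\ca(\Gamma)\otimes\ca(V_1^{d_1},\ldots,V_r^{d_r})$ as continuous $\ca(\Gamma)$-valued functions on $\bT^r$, and noting that for \emph{every} pair $\mu,\nu\in\Gamma$ there is some $n$ with $s_\mu s_\nu^*\otimes V^n\in\fB$, commutation with these elements forces each fibre $\pi(x)(t)$ to commute with every $s_\mu s_\nu^*$, hence to be scalar because $\ca(\Gamma)$ is simple and unital; thus $\pi(x)\in I\otimes\ca(V_1^{d_1},\ldots,V_r^{d_r})$ and $x\in\ca(W_h:h\in\Per\Lambda)$, as in the paper's one-line computation.
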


\begin{proof}
Clearly $\Gamma$ is finite and so $\ca(\Gamma)$ is unital.
By Proposition \ref{P:dotx}, it is easy to see that $\Gamma$ is also cofinal. By Theorem \ref{T:aper}, $\Lambda/\sim$ is aperiodic. As \cite[Proposition 4.8]{KumPask}, one can see that
$\ca(\Lambda/\sim)$ is simple. Thus $\Z(\ca(\Lambda))=\pi^{-1}(\Z(\pi(\ca(\Lambda))))=\pi^{-1}(I\otimes \ca(V_1^{d_1},\ldots, V_r^{d_r}))=\ca(W_1,\ldots, W_r)$ from \eqref{E:1}.
\end{proof}

\subsection*{Acknowledgements} The author would like to heartily thank Professor Aidan Sims for pointing out a mistake in the proof of 
Theorem \ref{T:tensor} in an earlier version, and for providing very valuable suggestions and comments. 
Also, the author is very grateful  for Professor Ken Davidson's useful comments and encouragement.

\end{document}